\newcommand{\Z}{{\bf Z}}
\newcommand{\R}{{\bf R}}
\newcommand{\LR}{{L^2({\bf R})}}
\newcommand{\Ltwoloc}{L^2_{\textrm{loc}}(J)}
\newcommand{\ip}[1]{\langle #1 \rangle}
\newcommand{\newterm}[1]{\textbf{#1}}
\newcommand{\spam}{\text{ span }}
\theoremstyle{plain}
\newtheorem{theorem}{Theorem}
\newtheorem{lemma}[theorem]{Lemma}
\newtheorem{corollary}[theorem]{Corollary}
\newtheorem{proposition}[theorem]{Proposition}
\theoremstyle{definition}
\theoremstyle{remark}
\newtheorem{rem}{Remark}
\title[Wavelets centered on a knot sequence]{Wavelets centered on a knot sequence: theory, construction, and applications}
\author[B. W. Atkinson, D. O. Bruff, J. S. Geronimo, D. P. Hardin]{Bruce W. 
Atkinson, Derek O. Bruff,\\
 Jeffrey S. Geronimo, and Douglas P. Hardin}
\thanks{The research of JSG was partially supported by NSF grant DMS-0500641 and the research of DPH was partially supported  by NSF grant  DMS-1109266.}
\newcommand{\ctable}{
\begin{table}[htdp] 
\caption{Scaling Coefficients for $\Phi$ \label{cTable}}
\begin{center}

\begin{align*}
C_{0,0}&=\left(
\begin{array}{lll}
 \sqrt{-\frac{1}{2}+\frac{\sqrt{5}}{2}} & -\frac{1}{2
   \sqrt{16491+7375 \sqrt{5}}} & \frac{121}{2 \sqrt{5485+2441
   \sqrt{5}}} \\
 0 & \frac{1}{4} \sqrt{-818+370 \sqrt{5}} & -\sqrt{\frac{55}{2
   \left(405+181 \sqrt{5}\right)}} \\
 0 & \frac{251}{\sqrt{76350+33982 \sqrt{5}}} &
   \frac{59}{\sqrt{30410+13610 \sqrt{5}}}
\end{array}
\right)
\\
     C_{0,1}&=\left(
\begin{array}{lll}
 19 \sqrt{\frac{2}{9765+4387 \sqrt{5}}} & \frac{1}{2}
   \sqrt{\frac{11}{767+343 \sqrt{5}}} & \frac{19}{2}
   \sqrt{\frac{5}{25615+11463 \sqrt{5}}} \\
 \sqrt{\frac{2}{11} \left(-35+16 \sqrt{5}\right)} & \frac{1}{4}
   \left(13-5 \sqrt{5}\right) & \frac{5}{\sqrt{370+166 \sqrt{5}}} \\
 -\frac{1}{11} \sqrt{\frac{5922}{5}-\frac{2584}{\sqrt{5}}} &
   -\sqrt{-\frac{283}{8}+\frac{637}{8 \sqrt{5}}} & \frac{1}{44}
   \left(-129+53 \sqrt{5}\right)
\end{array}
\right)
\\
C_{1,0}&= \left(
\begin{array}{lll}
 0 & -\frac{1}{2} \sqrt{\frac{11}{1715+767 \sqrt{5}}} &
   \frac{11}{\sqrt{2} \left(35+17 \sqrt{5}\right)} \\
 0 & 0 & 0 \\
 0 & 0 & 0
\end{array}
\right) \\
     C_{1,1}&=\left(
\begin{array}{lll}
 -\sqrt{-\frac{2}{5}+\frac{1}{\sqrt{5}}} & \frac{1}{4}
   \sqrt{177-\frac{389}{\sqrt{5}}} & -\frac{1}{4} \sqrt{-89+41
   \sqrt{5}} \\
 0 & 0 & 0 \\
 0 & 0 & 0
\end{array}
\right)\\
     C_{1,\tau}&=\left(
\begin{array}{lll}
 3-\sqrt{5} & 0 & 0 \\
 0 & 1 & 0 \\
 0 & 0 & 1
\end{array}
\right)
\\
C_{\tau,\tau^2}&=\left(
\begin{array}{lll}
 \frac{1}{2} \sqrt{\frac{1}{2} \left(13-3 \sqrt{5}\right)} &
   -\frac{1}{4} \sqrt{\frac{5}{36875+16491 \sqrt{5}}} &
   \frac{121}{4 \sqrt{2441+1097 \sqrt{5}}} \\
 0 & \sqrt{-\frac{409}{8}+\frac{185 \sqrt{5}}{8}} &
   -\sqrt{\frac{55}{2 \left(405+181 \sqrt{5}\right)}} \\
 0 & \frac{251}{\sqrt{76350+33982 \sqrt{5}}} &
   \frac{59}{\sqrt{30410+13610 \sqrt{5}}}
\end{array}
\right)\\
C_{\tau,\tau^2+1}&=\left(
\begin{array}{lll}
 \frac{1}{44} \left(-63+31 \sqrt{5}\right) & \frac{1}{4}
   \sqrt{\frac{55}{1715+767 \sqrt{5}}} & \frac{19}{4}
   \sqrt{\frac{5}{11463+5123 \sqrt{5}}} \\
 \sqrt{\frac{2}{11} \left(-35+16 \sqrt{5}\right)} & \frac{1}{4}
   \left(13-5 \sqrt{5}\right) & \frac{5}{\sqrt{370+166 \sqrt{5}}}
   \\
 -\frac{1}{11} \sqrt{\frac{5922}{5}-\frac{2584}{\sqrt{5}}} &
   -\sqrt{-\frac{283}{8}+\frac{637}{8 \sqrt{5}}} & \frac{1}{44}
   \left(-129+53 \sqrt{5}\right)
\end{array}
\right)
\\
C_{\tau^2,\tau^2}&=
\ \left(
\begin{array}{lll}
 0 & -\frac{1}{2} \sqrt{\frac{11}{2 \left(1030+461 \sqrt{5}\right)}}
   & \frac{1}{2} \sqrt{\frac{1}{310} \left(166-69 \sqrt{5}\right)}
   \\
 0 & 0 & 0 \\
 0 & 0 & 0
\end{array}
\right)
\\
C_{\tau^2,\tau^2+1}&=\left(
\begin{array}{lll}
 -\sqrt{\frac{1}{310}+\frac{\sqrt{5}}{62}} & \frac{1}{2}
   \sqrt{\frac{1}{310} \left(625-192 \sqrt{5}\right)} &
   -\sqrt{-\frac{37}{248}+\frac{4 \sqrt{5}}{31}} \\
 0 & 0 & 0 \\
 0 & 0 & 0
\end{array}
\right)
\\
C_{\tau^2,\tau^3}&=\left(
\begin{array}{lll}
 2 \sqrt{\frac{1}{31} \left(6-\sqrt{5}\right)} & -\frac{1}{2}
   \sqrt{\frac{5}{2 \left(93850+41971 \sqrt{5}\right)}} &
   \frac{121}{2 \sqrt{12492+5554 \sqrt{5}}} \\
 0 & \sqrt{-\frac{409}{8}+\frac{185 \sqrt{5}}{8}} &
   -\sqrt{\frac{55}{2 \left(405+181 \sqrt{5}\right)}} \\
 0 & \frac{251}{\sqrt{76350+33982 \sqrt{5}}} &
   \frac{59}{\sqrt{30410+13610 \sqrt{5}}}
\end{array}
\right)
\\
C_{\tau^2,\tau^3+1}&= \left(
\begin{array}{lll}
 19 \sqrt{\frac{2}{22219+9991 \sqrt{5}}} & \frac{1}{2}
   \sqrt{\frac{55}{2 \left(4365+1952 \sqrt{5}\right)}} &
   \frac{19}{2} \sqrt{\frac{5}{58306+26096 \sqrt{5}}} \\
 \sqrt{\frac{2}{11} \left(-35+16 \sqrt{5}\right)} & \frac{1}{4}
   \left(13-5 \sqrt{5}\right) & \frac{5}{\sqrt{370+166 \sqrt{5}}} \\
 -\frac{1}{11} \sqrt{\frac{5922}{5}-\frac{2584}{\sqrt{5}}} &
   -\sqrt{-\frac{283}{8}+\frac{637}{8 \sqrt{5}}} & \frac{1}{44}
   \left(-129+53 \sqrt{5}\right)
\end{array}
\right)
\end{align*}

\end{center}
\end{table}%
}
\newcommand{\dtable}{
\begin{table}[htdp] 
\caption{Wavelet Coefficients for $\Psi$ \label{dTable}}
\begin{center}
\begin{align*}
D_{0,0}&=\left(
\begin{array}{ccc}
 0 & \frac{1}{2 \sqrt{300+134 \sqrt{5}}} & -\frac{1}{2}
   \sqrt{\frac{1}{10} \left(-2+\sqrt{5}\right)} \\
 \frac{1}{2} \left(-1+\sqrt{5}\right) & \frac{1}{2 \sqrt{2
   \left(5096+2279 \sqrt{5}\right)}} & -\frac{1}{2}
   \sqrt{-168+\frac{761}{2 \sqrt{5}}}
\end{array}
\right)
\\
     D_{0,1}&=\left(
\begin{array}{ccc}
 \frac{1}{55} \left(-5+18 \sqrt{5}\right) & -\frac{1}{2}
   \sqrt{-\frac{1}{2}+\frac{2}{\sqrt{5}}} & -\frac{6+7 \sqrt{5}}{22
   \sqrt{2}} \\
 -\frac{1}{11} \sqrt{\frac{1}{10} \left(6085-2689 \sqrt{5}\right)}
   & -\frac{1}{2} \sqrt{\frac{11}{2 \left(237+106 \sqrt{5}\right)}}
   & -\frac{19}{2} \sqrt{\frac{5}{2 \left(7925+3538
   \sqrt{5}\right)}}
\end{array}
\right)
\\
D_{1,0}&= \left(
\begin{array}{lll}
 0 & \frac{1}{\sqrt{445+199 \sqrt{5}}} & -\sqrt{\frac{11}{5
   \left(69+31 \sqrt{5}\right)}}
\end{array}
\right) \\
     D_{1,1}&=\left(
\begin{array}{lll}
 \sqrt{\frac{2}{55} \left(13-5 \sqrt{5}\right)} & -\frac{1}{2}
   \sqrt{-45+\frac{103}{\sqrt{5}}} & \frac{1}{2} \sqrt{\frac{1}{11}
   \left(263-113 \sqrt{5}\right)}
\end{array}
\right)\\
     D_{1,\tau}&=\left(
\begin{array}{lll}
 \sqrt{-13+6 \sqrt{5}} & 0 & 0
\end{array}
\right)
\\
D_{\tau,\tau^2}&= \left(
\begin{array}{lll}
 \sqrt{-\frac{5}{8}+\frac{3 \sqrt{5}}{8}} & \frac{1}{4}
   \sqrt{\frac{31}{62071+27759 \sqrt{5}}} & -\frac{1}{8}
   \sqrt{4141-\frac{9161}{\sqrt{5}}} \\
 0 & \frac{1}{2 \sqrt{300+134 \sqrt{5}}} & -\frac{1}{2}
   \sqrt{\frac{1}{10} \left(-2+\sqrt{5}\right)}
\end{array}
\right)
\\
\end{align*}
\end{center}
\end{table}%
\begin{table*}[htdp] 
\begin{center}
{\normalsize Table~\ref{dTable} continued}
\begin{align*}
D_{\tau,\tau^2+1}&= \left(
\begin{array}{lll}
 -\sqrt{-\frac{7331}{968}+\frac{16557}{968 \sqrt{5}}} &
   -\frac{1}{4} \sqrt{\frac{341}{2887+1291 \sqrt{5}}} &
   -\frac{19}{4} \sqrt{\frac{155}{96375+43163 \sqrt{5}}} \\
 \frac{1}{55} \left(-5+18 \sqrt{5}\right) & -\frac{1}{2}
   \sqrt{-\frac{1}{2}+\frac{2}{\sqrt{5}}} & -\frac{6+7 \sqrt{5}}{22
   \sqrt{2}}
\end{array}
\right)
\\
D_{\tau^2,\tau^2}&=
\left(
\begin{array}{ccc}
 0 & \sqrt{\frac{873}{62}-\frac{976}{31 \sqrt{5}}} &
   -\frac{11}{\sqrt{6770+3040 \sqrt{5}}} \\
 0 & \frac{1}{2 \sqrt{199+89 \sqrt{5}}} & -\frac{1}{4}
   \sqrt{31-\frac{69}{\sqrt{5}}} \\
 0 & 0 & 0
\end{array}
\right)
\\
D_{\tau^2,\tau^2+1}&=
\left(
\begin{array}{ccc}
 \sqrt{\frac{2}{155} \left(23-9 \sqrt{5}\right)} &
   -\sqrt{-\frac{221}{31}+\frac{1009}{62 \sqrt{5}}} &
   \sqrt{\frac{1}{62} \left(234-101 \sqrt{5}\right)} \\
 \sqrt{-\frac{5}{22}+\frac{13}{22 \sqrt{5}}} & \frac{9-5
   \sqrt{5}}{4 \sqrt{2}} & \sqrt{-\frac{565}{176}+\frac{263
   \sqrt{5}}{176}} \\
 0 & 0 & 0
\end{array}
\right)
\\
D_{\tau^2,\tau^3}&=\left(
\begin{array}{ccc}
 \sqrt{\frac{1}{31} \left(7+4 \sqrt{5}\right)} & \sqrt{\frac{5}{2
   \left(397555+177792 \sqrt{5}\right)}} &
   -\frac{121}{\sqrt{52754+23600 \sqrt{5}}} \\
 0 & -\frac{1}{2 \sqrt{9349+4181 \sqrt{5}}} & \frac{121}{2}
   \sqrt{\frac{11}{34145+15249 \sqrt{5}}} \\
 0 & \frac{1}{2 \sqrt{300+134 \sqrt{5}}} & -\frac{1}{2}
   \sqrt{\frac{1}{10} \left(-2+\sqrt{5}\right)}
\end{array}
\right)\\
D_{\tau^2,\tau^3+1}&= \left(
\begin{array}{ccc}
 -38 \sqrt{\frac{2}{94393+42201 \sqrt{5}}} & -\sqrt{\frac{55}{2
   \left(18490+8269 \sqrt{5}\right)}} & -19 \sqrt{\frac{5}{2
   \left(123546+55249 \sqrt{5}\right)}} \\
 19 \sqrt{\frac{2}{5545+2483 \sqrt{5}}} & \frac{11}{2
   \sqrt{4783+2139 \sqrt{5}}} & \frac{19}{2}
   \sqrt{\frac{5}{14525+6497 \sqrt{5}}} \\
 \frac{1}{55} \left(-5+18 \sqrt{5}\right) & -\frac{1}{2}
   \sqrt{-\frac{1}{2}+\frac{2}{\sqrt{5}}} & -\frac{6+7 \sqrt{5}}{22
   \sqrt{2}}
\end{array}
\right)
\end{align*}
\end{center}
\end{table*}%
}
\keywords{orthogonal wavelets; piecewise polynomial; irregular knot sequences; quasi-crystal lattice}
\begin{document}

\begin{abstract}
We develop a general notion of orthogonal wavelets `centered' on an
irregular knot sequence. We present two families of orthogonal wavelets
that are continuous and piecewise polynomial. We develop efficient
algorithms to implement these schemes and apply them to a data set
extracted from an ocelot image.  As another application, we construct continuous, piecewise quadratic, orthogonal wavelet bases on the quasi-crystal lattice consisting of the  $\tau$-integers where $\tau$ is the golden ratio.  The resulting  spaces then generate a multiresolution analysis  of $L^2(\mathbf{R})$ with scaling factor $\tau$.  
\end{abstract}

\maketitle

\section* {Introduction}\label{intro}

Wavelets are a useful tool for representing general functions and
datasets and are now used in a wide variety of areas such as signal
processing, image compression, function approximation, and 
finite-element methods. Traditionally wavelets are constructed from one
function, the so-called mother wavelet, by integer translation and dyadic
dilation and give rise to a stationary refinement equation.  For many functions however this may not
be the most efficient way of approximating them and one may require
adaptive schemes for which the knot sequence is not regularly
spaced or where the knots are on a regular grid but the refinement equation
changes at each step. There has been much work devoted to extending wavelet
constructions to irregularly spaced knot sequences including the lifting scheme of
Sweldens \cite{S} and wavelets on irregular grids (see Daubechies
et. al. \cite{DGSS},\cite{DGS}, Charina and St\"{o}ckler \cite{ChSt}) or  nonstationary tight wavelet frames (see Chui, He, and St\"ockler, \cite{Chui1, Chui2} and Shah \cite{Shah}) or to nonstationary refinement masks (see Cohen and 
Dyn \cite{Dy} and Herley, Kova\u{c}evi\'{c}, and Vetterli \cite{HKV}).   If the knots are allowed to be chosen generally enough
it is difficult to maintain the orthogonality  of the wavelet functions
while keeping the compact support and smoothness properties.  

In \cite{DGHinter, DGHorpoly}   piecewise polynomial, orthogonal, multiwavelets with compact support were constructed using multiresolution analyses 
that were {\em intertwined} with classical spline spaces.  Both the multiwavelets as well as the scaling functions  generating these multiwavelets have `short' support  allowing
them to be adapted to irregular knot sequences using the machinery of  {\em squeezable bases} developed in \cite{DGHspie, DGHsqueeze} in a way that preserved the orthogonality, polynomial reproduction, and smoothness of the respective bases.  In general, the 
multiresolution analyses were restricted to  {\em semi-regular} refinement schemes in which an initial irregular knot sequence is refined by a regular refinement scheme such as mid-point subdivision (cf.~\cite{BH}).  However, in \cite{DGHsqueeze}, a construction  was given of a fully irregular multiresolution analysis consisting of continuous, piecewise quadratic functions on  an {\em arbitrary} sequence of nested knot sequences such that the multiresolution spaces have compactly supported orthogonal bases with `short' support (we review and generalize this construction in Section~\ref{ex1}).  The resulting spaces in the multiresolution analysis did not fit, for general knot sequences, into the framework   of squeezable bases.    In \cite{BruffThesis},  a more general notion of {\em bases centered on a knot sequence} was introduced that includes the squeezable bases as well as the irregular construction. In particular, necessary and sufficient conditions were given for when a space $V$ has an {\em orthogonal} basis centered on a knot sequence.   This is used to prove one of the main  results  of this paper that if $V^0 \subset V^1$ are spaces generated by orthogonal bases centered on a common  knot sequence $\mathbf{a}$ then the orthogonal complement $W=V^1\ominus V^0$ is also generated by  an orthogonal basis $\Psi$ centered on $\mathbf{a}$.   

In Section~\ref{sect:GeneralBases} we review and further elucidate the theory of bases centered on a knot sequence.  In particular, as mentioned above, we focus on the 	question of characterizing spaces that are generated by orthogonal bases centered on a knot sequence. We next give two different constructions of nested spaces of this type. The first uses a fixed knot sequence $\mathbf{a}$ and constructs $V^0 \subset V^1 \subset \cdots$ which are spaces generated by orthogonal continuous piecewise polynomial functions with compact support and with breakpoints in $\mathbf{a}$; these spaces have increasing polynomial reproduction. The second construction begins with a given knot sequence $\mathbf{a}$ and the spline space $S^0_2(\mathbf{a})$ consisting of continuous piecewise quadratic functions with breakpoints in $\mathbf{a}$. The knot sequence $\mathbf{a}$ is then refined and a space $V$, containing $S^0_2(\mathbf{a})$,   is constructed which is generated by continuous orthogonal piecewise quadratic functions with compact support and breakpoints in the refined sequence. If  $\mathbf{a^0}\subset \mathbf{a^1}$ and $V^0$ (resp. $V^1$) is constructed in this way starting with $\mathbf{a^0}$ (resp. $\mathbf{a^1}$), we describe general conditions under which $V^0 \subset V^1$. 
 
In Section~\ref{Wavelets} it is shown how to build wavelets from the scaling functions constructed in the previous section. Certain spaces are introduced which shed  light on techniques of  \cite{DGHorpoly}. The wavelet construction methods   given in  \cite{DGHorpoly} are sufficiently general so that they can be used to give a decomposition of the wavelet spaces,
thus providing a convenient algorithm for calculating these bases. 

In Section~\ref{efficient_constr} 
efficient
algorithms are developed in order to demonstrate practicality of the method
developed in the previous sections.
The matrices, defined in section~\ref{Wavelets},  in the equations
for the scaling functions and wavelets are computed after a knot is added or
dropped. These algorithms are based upon the greedy algorithm and nonlinear
approximation schemes \cite{BCDD}, \cite{Dy}, \cite{DFI} and an application
to a data set extracted from the digital image of an ocelot is given to show their effectiveness.

In Section \ref{tauwavelets} a
construction of  multiwavelets is
carried out for the knot sequence consisting of  the
$\tau$-integers, where $\tau=\frac{1}{2} (1+\sqrt 5)$
is the golden mean.  These multiwavelets have a scaling factor $\tau$
and will be called $\tau$-multiwavelets.  The lattice generated by 
$\tau$ and other Pisot numbers appears in the study of 
quasi-crystals and powers of these numbers appear in the diffraction
patterns of actual experiments.  
$\tau$-Haar wavelets were constructed in \cite{GP, GS}. $\tau$-Haar wavelets are orthogonal and compactly supported but they
are not continuous. The above construction is used to give examples of piecewise quadratic \emph{continuous} compactly
supported $\tau$-multiwavelets.     Additional work on multiresolution analyses with irrational scaling factors includes 
  Chui and Shi \cite{CS}, Hern\'{a}ndez, Wang, and Weiss \cite{HWWI}, \cite{HWWII}, and Bownik \cite{Bownik}.



\section{Bases Centered on Knot Sequences}

\label{sect:GeneralBases}


Let $J$ be an interval in $\mathbf{R}$ and let $\mathbf{a}\subset J$ have no
cluster point in $J$ and such that $\inf\mathbf{a}=\inf J$ and $\sup \mathbf{a}=\sup J$; 
to avoid trivial cases we assume that  $\mathbf{a}$ consists of at least three elements. We refer to
such a set $\mathbf{a}$ as a \textbf{knot sequence} in $J$ since it can be
represented as the range of a strictly increasing sequence indexed by an
interval $\mathcal{I}\subset\mathbf{Z}$.

Let $\mathbf{a}$ be a knot sequence
in $J$. If $a\in\mathbf{a}$, we define $a_{+}:=\inf
\{b\in\mathbf{a}|a<b\}$; in this case we shall refer to $a_{+}$ as the
\textbf{successor} of $a$  in $\mathbf{a}$.
  We will write $a_{++}$ for $(a_{+})_{+}$.
Similarly,   we define $a_{-}:=\sup\{b\in\mathbf{a}|b<a\}$;
i.e. $a_{-}$ is the \textbf{predecessor} of $a$ in $\mathbf{a}$.   We remark that
$a_+=\infty$ if  $a=\sup J$ and $a_-=-\infty$ if  $a=\inf J$.

For an arbitrary collection of functions  $G\subset L^{2}(J)$ and an interval $I\subset J$, 
 let
  $$G_I:=\{g\in G \ |\ \text{supp }g\subset I\}.$$ 
If  $\Phi\subset L^{2}(J)$ is a locally finite collection of functions    (i.e.,
on any compact interval  $K\subset J$ all but a finite number of $\phi\in \Phi$  vanish on $K$)   then for $a\in\mathbf{a}$ we define
\begin{equation} \label{suba}
\begin{split}  
\breve{\Phi}_{a}&:=\Phi_{[a,a_+]},  \\ 
\Phi_{a}&:=\Phi_{[a_{-},a_+]}\setminus\breve{\Phi}_{a_{-}}, \text{ and}\\
\bar{\Phi}_{a}&:=\Phi_{a}\setminus\breve{\Phi}_{a},  
\end{split}
\end{equation}
where we define $\breve \Phi_{-\infty}:=\emptyset$ and $\bar\Phi_{\infty}:=\emptyset$.   We remark that if $J$ contains its supremum and $a=\sup J$ then $\Phi_a=\emptyset$ while if $J$ contains its infimum and $a=\inf J$ then
$\bar \Phi_a=\emptyset$.  The sets $\breve{\Phi}_a$, $\bar{\Phi}_a$, and $\Phi_a$ depend on the knot sequence  ${\mathbf a}$ since $a_+$ and $a_-$ are defined relative to $\mathbf{a}$.  When there is a chance of ambiguity we write $\breve{\Phi}_{a,\mathbf{a}}$, $\bar{\Phi}_{a,\mathbf{a}}$, and $\Phi_{a,\mathbf{a}}$
to denote the knot sequence $\mathbf{a}$ that is referenced. 
 
We say that 
$\Phi\subset L^2(J)$ is  a \textbf{basis \ centered on the
knot sequence }$\mathbf{a}$ provided 
 \begin{equation}\label{Phi:Def}
 \begin{split}
\text{(a) }& \text{$\Phi$ is locally finite,}\\
\text{(b) }& \text{ $\Phi=\bigcup_{a\in\mathbf{a}}\Phi_a$, and}\\
\text{(c) }& \text{ $\left(  \Phi_{a}\cup\bar{\Phi}_{a_{+}%
}\right)  |_{[a,a_{+}]}$ is a linearly independent set for all $a\in \mathbf{a}$}.
 \end{split}  
 \end{equation}  
 For any $\Phi$ satisfying  conditions (a) and (b) of \eqref{Phi:Def}, we let 
\[
\label{SPhi}
S(\Phi) := \text {clos}_{L^2(J)} \text{span }   (\Phi).\]
 
  The notion  of a basis centered on a knot sequence was introduced in \cite{BruffThesis} and  is a generalization of
bases obtained from \textbf{minimally supported generators }as defined in
\cite{DGHinter, DGHsqueeze}.   Roughly speaking,   a basis centered on a knot sequence consists of functions whose supports 
 overlap at most on a single `knot interval' and the non-zero restrictions of these functions to each such knot interval are linearly independent.


If $\Phi$ is a basis centered on $\mathbf{a}$  then it 
follows from  properties \eqref{Phi:Def}  that  any $f\in S(\Phi)$ has a unique representation of the form
\begin{equation}\label{basis_expansion}
 f=\sum_{a\in\mathbf{a}} c_a\Phi_a=\sum_{a\in\mathbf{a}} (\breve{c}_a\breve\Phi_a+
 \bar{c}_a\bar\Phi_a),
 \end{equation}
where the convergence of the sums is in   $\Ltwoloc$.  Note that $c_a$, $\breve{c}_a$, and $\bar{c}_a$
are treated as row vectors and $\Phi_a$,  $\breve{\Phi}_a$, and $\bar{\Phi}_a$ are treated as column vectors so that $c_a\Phi_a$ denotes the linear combination of elements $\Phi_a$ with
coefficient vector $c_a$; the expressions   
$\breve{c}_a\breve\Phi_a$ and
 $\bar{c}_a\bar\Phi_a$ are interpreted similarly.  It then follows from the local linear independence condition (c) that 
 \begin{equation}\label{coeffvanish1}
 \text{ $f\in S(\Phi)$ and $f=0$ on  $[a,a_+]$} \implies \text{
 $\bar c_a$, $\bar c_{a_+}$ and $\breve c_a$ are all 0}, 
 \end{equation}
 and then \eqref{coeffvanish1} implies 
\begin{equation}
 S(\Phi)_{[a,b]}=\text{ span } \left(\Phi_{[a,b]}\right)=\text{ span }(\breve{\Phi}%
_{a}\cup(\bigcup_{\substack{c\in\mathbf{a} \\ a<c<b}}\,\Phi_{c}))\label{spancond},%
\end{equation}
for $a,b\in\mathbf{a}$ with $a<b$. 
Note that if there are no knots $c$ between $a$ and $b$, then the sum (resp.
union) in equation (\ref{spancond}) is zero (resp. empty).  

For  $V\subset L^2(J)$,  let 
$$V_{a}:=V_{[a_{-},a_+]} \text{ and }\breve{V}_a:=V_{[a,a_+]}\qquad a\in \mathbf{a}.$$  
Note that if $\Phi$ is a basis centered on the knot sequence $\mathbf{a}$ and
$V=S(\Phi)$, then equation (\ref{spancond}) implies that for $a\in \mathbf{a}$,
\[\breve V_a=\text{span }\breve\Phi_a
\]
and 
\[
V_a= 
\text{span (}\breve{\Phi}_{a_{-}}\cup\Phi_{a}).
\]
In particular it follows that each $V_a$ is finite dimensional. 
 We next   characterize when  $V\subset L^2(J)$ equals $S(\Phi)$ for
 some basis $\Phi$ centered on $\mathbf{a}$. 
 
\begin{theorem}
\label{LemmaSpace2Basis}    Let $\mathbf{a}$ be a knot sequence in  an interval 
 $J$ and suppose   $V \subset L^2(J)$.
Then $V=S(\Phi)$ for some basis $\Phi$ centered on $\mathbf{a}$ if and only if
the following three properties hold.

\begin{itemize}
\item[($\hat a$)] $V_a$ is a finite dimensional subspace of  $L^2(J)$ for $a\in \mathbf{a}$.

\item[($\hat b$)] $V=\text {clos}_{L^2(J)}\spam\left(\bigcup_{a\in \mathbf{a}}V_a\right).$

\item[($\hat c$)] If $f\in V$ vanishes on $[a,a_{+}]$, then  $f\in V_{(-\infty,a ]}+V_{[a_+,\infty )}$.
\end{itemize}

If ($\hat a$--$\hat c$) hold, then, for $a\in\mathbf{a}$, let $\breve\Phi_a$ be a basis for $\breve V_a$ 
and let $\bar\Phi_a$ augment $\breve\Phi_{a_{-}}\cup\breve\Phi_a$ to a basis of
$V_a$.   Then $\Phi:= \bigcup_{a\in\mathbf{a}}\left(\bar \Phi_{a} \cup\breve \Phi_a\right)$ is a basis centered on $\mathbf{a}$ such that $V=S(\Phi).$  Furthermore, any  $\Phi$ 
such that $V=S(\Phi)$ is of this form, that is,  $\breve\Phi_a$ must be a basis of $\breve V_a$ and $\bar\Phi_a\cup \breve\Phi_{a_{-}}\cup\breve\Phi_a$ must be a basis of
$V_a$.
\end{theorem}

\begin{rem}    Theorem~\ref{LemmaSpace2Basis} implies that if $\Phi$ is a basis centered on a knot sequence $\mathbf{b}$  that is a \newterm{refinement} of $\mathbf{a}$ (i.e., $\mathbf{a}\subset\mathbf{b}$), then $\Phi$ is also a basis centered on $\mathbf{a}$.  Note that, for  $a\in \mathbf{a}\subset \mathbf{b}$,  the sets $\Phi_{a, \mathbf{a}}$ and $ \Phi_{a,\mathbf{b}}$ are, in general, not equal.  
\end{rem}

\begin{proof}
($\Rightarrow$) Suppose $V=S(\Phi)$ for some basis $\Phi$ centered on the knot
sequence $\mathbf{a}$. Conditions ($\hat a$) and ($\hat b$) then follow from (\ref{spancond}) 
and condition ($\hat c$) follows from \eqref{coeffvanish1}. 
 
($\Leftarrow$) Suppose that ($\hat a$--$\hat c$) hold and that $\breve{\Phi}_{a}$ and
$\bar{\Phi}_{a}$ are constructed as in the statement of
Theorem~\ref{LemmaSpace2Basis}. Then ($\hat b$) implies $V=S(\Phi)$ and so it remains to show the local linear independence condition of $\Phi$. 
Let $a\in\mathbf{a}$ and suppose that $\bar c_a$, $\breve c_a$, and $\breve c_{a_+}$
are vectors so that 
\begin{equation}\label{SPhilemma.1}
f:=\bar c_a\bar \Phi_a+\breve c_a\breve \Phi_a+\bar c_{a_+}\bar \Phi_{a_+}=0 
\text{ on $[a,a_+]$.}
\end{equation}  It now suffices to show that these coefficient vectors must all
vanish.   By condition ($\hat c$), we have $f=f_1+f_2$ where $f_1\in V_{(-\infty,a ]}$
and $f_2\in V_{[a_+,\infty )}$.   Since $f$ and $f_2$ vanish on $(-\infty, a_{-}]$ it follows that 
$f_1\in V_{[a_{-},a]}$. Hence, $f_1=\breve d_{a_{-}}\breve\Phi_{a_{-}}$ for some vector  $\breve d_{a_{-}}$ by the construction of $\breve \Phi_{a_{-}}$.  Then  
$g:=\bar c_a\bar \Phi_a-\breve d_{a_{-}}\breve\Phi_{a_{-}}=0$ on $[a_{-},a]$. Since the support of $g$ is in $[a,a_+]$, then $g=\breve d_a\breve\Phi_a$ for some vector $\breve d_a$, i.e.,
$$  \bar c_a\bar \Phi_a-\breve d_{a_{-}}\breve\Phi_{a_{-}}-\breve d_a\breve\Phi_a=0.$$
Since  $\bar \Phi_{a}$,  $\breve\Phi_{a_{-}}$ and $\breve\Phi_{a }$ are linearly independent, 
it follows that $  \bar c_a=0$.  Similarly,  we have $\bar c_{a_+}=0$ and thus, by the linear
independence of $\breve\Phi_a$ we have $\breve c_a=0$. 
\end{proof}

\subsection{Example}

\label{sect:spline_example} Let $\Pi^{d}$ denote the space of univariate
polynomials of degree at most $d$. For a knot sequence $\mathbf{a}$ on an
interval $J$ and integers $r<d$, let
\[
S_d^r(\mathbf{a}):=\{f \mid   \text{supp } f \subset J,\,  f\in C^{r}(J)\cap L^2(J), \text{ and } f|_{(a,a_{+})}\in\Pi^{d}\text{
for }a\in\mathbf{a} \}
\]
denote the classical spline space of degree $d$ and regularity $r$ intersected with $L^2(J)$. If $d\geq 2r+1$, then  $S_d^r%
(\mathbf{a})=S(\Phi)$ for a basis $\Phi$ centered on
$\mathbf{a}$.  For example, the B-spline basis for $S_d^r(\mathbf{a})$, which we denote by $\Phi_{r,d}^{\mathbf{a}}$, is  
such a basis, cf.~\cite{Schumaker}. Furthermore,  $|\breve{\Phi}_{a}|=d-2r-1$ if
$a<\sup J$, and for $\inf J<a<\sup J$, we have $|\Phi_{a}|=d-r$ and  $|\bar{\Phi}_{a}|=r+1$.

\subsection{Orthogonality Condition}

\label{sect:GeneralOrthogonality}

Bases $\Phi$ and $\Psi$ centered on the same knot sequence $\mathbf{a}$ in the  interval $J$
are called \textbf{equivalent} if $S(\Phi)=S(\Psi)$. We next give a necessary
and sufficient condition for a basis $\Phi$ centered on $\mathbf{a}$ to be
equivalent to some \textit{orthogonal} basis $\Omega$ centered on $\mathbf{a}%
$. This condition is the main tool we use to construct orthogonal bases
centered on a knot sequence. Previous versions of this theorem appeared in
\cite{DGHinter} (for the shift-invariant setting),  in \cite{DGHsqueeze} (for
the setting of \textquotedblleft squeezable\textquotedblright\ orthogonal
bases) and in \cite{BruffThesis} (for the current setting of bases centered on a
knot sequence).


\begin{theorem}
\label{LemmaOrthCond} Let $J$ be an interval in $\mathbf{R}$, $\mathbf{a}$ a
knot sequence on $J$,  $\Phi$ a basis centered on $\mathbf{a}$, and 
$V=S(\Phi)$. Then there exists an orthogonal basis $\Omega$ centered on the
knot sequence $\mathbf{a}$ such that $S(\Omega)=S(\Phi)$ if and only if
\begin{equation}
\left(  I-P_{\breve{V}_{a}}\right)  V_{a}\perp V_{a_{+}}\text{ for } 
a\in \mathbf{a}\text{.}\label{EqnOrthCond}%
\end{equation}
If (\ref{EqnOrthCond}) holds, then, for $a\in \mathbf{a}$,   let $\breve{\Omega}_{a}$ be an orthogonal basis of $\breve{V}_{a}$,  $\bar{\Omega}_{a}$ be an orthogonal basis of $\left(  I-P_{\breve{V}_{a_{-}}\oplus\breve{V}_{a}}\right)V_a$, and $\Omega_a=\breve\Omega_a\cup \bar\Omega_a$.  Then
$\Omega=\{\Omega_{a}\}_{a\in\mathbf{a}}$ is such an orthogonal basis.
\end{theorem}


\begin{proof}

Suppose $a\in\mathbf{a}$.  Note that 
$\breve{V}_{a_{-}}\perp V_{a_{+}}$ since their supports intersect in at most one point.  It then follows that (\ref{EqnOrthCond}) is
equivalent to
\begin{equation}
\left(  I-P_{\breve{V}_{a_{-}}\oplus\breve{V}_{a}}\right)  V_{a}\perp
V_{a_{+}}\text{.}\nonumber
\end{equation}
Furthermore, since $\left(  I-P_{\breve{V}_{a_{-}}\oplus\breve{V}_{a}}\right)
V_{a}$ is orthogonal to $\breve{V}_{a}$ and, due to
support properties,  is also orthogonal to $\breve{V}_{a_{+}}$, it follows that (\ref{EqnOrthCond}) is equivalent to
\begin{equation}
\left(  I-P_{\breve{V}_{a_{-}}\oplus\breve{V}_{a}}\right)  V_{a}\perp\left(
I-P_{\breve{V}_{a}\oplus\breve{V}_{a_{+}}}\right)  V_{a_{+}}\text{.}
\label{EqnOrthCondAlt}%
\end{equation}

($\Rightarrow$) Suppose that $V=S(\Omega)$, where $\Omega$ is an orthogonal
basis centered on the knot sequence $\mathbf{a}$. It follows from
(\ref{spancond}) and the orthogonality of $\Omega$ that $$V_{a}=\text{span }
\breve{\Omega}_{a_{-}}\oplus \text{span }\bar{\Omega}_{a}\oplus\text{span }\breve
{\Omega}_{a}=\breve{V}_{a_{-}}\oplus\text{span }\bar{\Omega}_{a}\oplus\breve
{V}_{a},$$  and so $\text{span}(\bar{\Omega}_{a})=\left(
I-P_{\breve{V}_{a_{-}}\oplus\breve{V}_{a}}\right)  V_{a}$. Since $\Omega$ is
an orthogonal basis, $\text{span}(\bar{\Omega}_{a})\perp\text{span}%
(\bar{\Omega}_{a_{+}})$, and so
(\ref{EqnOrthCondAlt}), and hence (\ref{EqnOrthCond}), holds.

($\Leftarrow$) Suppose that $V$ satisfies (\ref{EqnOrthCond}), and hence
satisfies (\ref{EqnOrthCondAlt}). Construct  $\Omega$ as in the statement of
 Theorem~\ref{LemmaOrthCond}.  By Theorem~\ref{LemmaSpace2Basis},  it follows that   $\Omega$ is a basis
centered on $\mathbf{a}$ such that $V=S(\Omega)$. Equation  
(\ref{EqnOrthCondAlt}) implies that $\bar{\Omega}_{a}\perp\bar{\Omega}_{a_{+}}$ for $a\in\mathbf{a}$, which shows $\Omega$ is an orthogonal basis of $V$. 
\end{proof}

Suppose $V=S(\Phi)$ for some $\Phi$. Then it is
easy to verify that (\ref{EqnOrthCond}) holds if and only if
\begin{equation}
\left(  I-P_{\breve{V}_{a}}\right)  \bar{\Phi}_{a}\perp\bar{\Phi}_{a_{+}}, \qquad  a\in\mathbf{a}\text{.} \label{OrthBarEqn}%
\end{equation}
We use this equation as the basis for a construction of orthogonal bases
centered on a knot sequence as we next describe. Suppose $$Z:=\text {clos}_{L^2(J)} \text{span } \big( \bigcup_{a\in\mathbf{a}}Z_{a}\big ),$$ where  $Z_{a}$, $a\in\mathbf{a}$ is a
finite dimensional subspace of $L^{2}(J)$ such that (a) the elements
of $Z_{a}$ are supported in $[a,a_{+}]$ and (b) $Z_{a}$ is linearly
independent of $V$ restricted to $[a,a_{+}]$. Then $U:=V+Z$ satisfies the hypotheses ($\hat{a}$), ($\hat{b}$), and ($\hat{c}$) of
Theorem~\ref{LemmaSpace2Basis} and, by this lemma, $U=S(\Theta)$ where 
$\Theta$ is a basis centered on
$\mathbf{a}$ such that $\breve\Theta_a$ is a basis for  $\breve{U}_{a}=\breve{V}_{a}+Z_{a}$ and $\bar\Theta_a:=\bar\Phi_a$ for $a\in\mathbf{a}$.  By  Theorem~\ref{LemmaOrthCond}, $U=S(\Omega)$
for some orthogonal basis $\Omega$ centered on $\mathbf{a}$ if and only if
\begin{equation} 
\left(  I-P_{\breve{V}_{a}+Z_{a}}\right)  \bar{\Phi}_{a}\perp\bar{\Phi}_{a_{+}%
}, \qquad a\in\mathbf{a}. \label{OrthBarEqn2}%
\end{equation}

Without loss of generality, we may choose $Z_a$ orthogonal to $\breve{V}_{a}$. 
Then \eqref{OrthBarEqn2} is equivalent to
\begin{equation} 
\langle  (I-P_{\breve{V}_{a}} )  \bar{\Phi}_{a}, \bar{\Phi}_{a_{+}}\rangle%
 =\langle    P_{Z_a}\bar{\Phi}_{a}, \bar{\Phi}_{a_{+}}\rangle, \qquad a\in\mathbf{a}. \label{OrthBarEqn3}%
\end{equation}
where for finite collections $F,G\subset \LR$, we let $\langle F, G\rangle $ denote the
matrix $\left(\langle f, g\rangle\right)_{f\in F, g\in G}$ indexed by $F$ and $G$.  
Then from \eqref{OrthBarEqn3} it follows that  $$\dim Z_a\ge \text{rank }\langle  (I-P_{\breve{V}_{a}} )  \bar{\Phi}_{a}, \bar{\Phi}_{a_{+}}\rangle.$$
We remark  that if one finds a $Z_a$ that satisfies \eqref{OrthBarEqn2}, then one can always choose $Z'_a\subset Z_a$ such that equality holds in the above estimate.

In this paper, we focus on constructions in which  the spaces $Z_{a}$ above are
chosen via a generalization of the \textbf{intertwining technique} developed
in \cite{DGHinter} (and extended in \cite{BruffThesis} and \cite{DGHsqueeze}) to
construct orthogonal piecewise polynomial wavelets. To that end, suppose
$\Phi^{\prime}$ and $\Phi^{\prime\prime}$ are bases centered on $\mathbf{a}$
such that
\begin{equation}
S(\Phi)\subset S(\Phi^{\prime})\subset S(\Phi^{\prime\prime}) \label{intereq}%
\end{equation}
and that there exist spaces $Z_{a}\subset$ span $\breve{\Phi}_{a}^{\prime}$
(respectively, $Z_{a}^{\prime}\subset$ span $\breve{\Phi}_{a}^{\prime\prime}%
$), $ a\in\mathbf{a}$, as in the previous paragraph, so that there
exists an orthogonal basis $\Omega$ (respectively, $\Omega^{\prime}$) such
that $S(\Omega)=S(\Phi)+S(Z)$ (respectively, $S(\Omega^{\prime})=S(\Phi
^{\prime})+S(Z^{\prime})$). It then follows that
\[
S(\Omega)\subset S(\Phi^{\prime})\subset S(\Omega^{\prime}).
\]
Specifically, suppose $\mathbf{b}$ and $\mathbf{c}$ are knot sequences in $J$
so that $\mathbf{a}\subset\mathbf{b}\subset\mathbf{c}$ and consider the spline
bases $\Phi=\Phi_{r,d}^{\mathbf{a}}$, $\Phi^{\prime}=\Phi_{r,d}^{\mathbf{b}}$,
and $\Phi^{\prime\prime}=\Phi_{r,d}^{\mathbf{c}}$ where $r\geq2d+1$. Then
$S(\Phi)\subset S(\Phi^{\prime})\subset S(\Phi^{\prime\prime})$. We also
recall that  $\Phi^{\prime}$ and $\Phi
^{\prime\prime}$ are bases centered on $\mathbf{a}$ as well as centered on
$\mathbf{b}$ and $\mathbf{c}$, respectively; see the remark following
Theorem~\ref{LemmaSpace2Basis}.

\subsection{Continuous, orthogonal, spline basis centered on a knot sequence: arbitrary polynomial reproduction.
\label{ex1.2}} In this example we will use extensively the results of
\cite{DGHorpoly}. Let $J$ be an interval in $\mathbf{R}$, $\mathbf{a}$ a knot sequence in $J$,
and let $S_{n}^{0}(\mathbf{a})$ denote the spline space consisting of
continuous piecewise polynomial functions in $L^2(J)$ of degree at most  $n$ defined on $J~$with break
points in $\mathbf{a}$. Then $S_{n}^{0}(\mathbf{a})=S(\Phi)$ for a basis
$\Phi$ centered on $\mathbf{a}$. Let $I=[0,1]$, and let $\Pi^n$ denote the space of polynomials
of degree $n$ on $I$. If we set
$\tilde\phi^i(x)=x(1-x)p_{i-2}^{\frac{5}{2}}(2x-1)\chi_{[0,1)}(x),\ i=2,3,\ldots,n$ where
$p_{i}^{\frac{5}{2}}(x)$ is the monic ultraspherical polynomial of degree
$i$ in $x$ then 
$\{\tilde\phi^i\}_{i=2}^n$ form an orthogonal set in $L^2(I)$. Let $r=x\chi_{[0,1)}(x)$, $l=(1-x)\chi_{[0,1)}(x)$,
\begin{equation}
r_n=(I-P_{\{\tilde\phi^k\, |\, k=2,\ldots, n\}})r,\ \text{and}\ l_n=(I-P_{\{\tilde\phi^k\, |\, k=2,\ldots, n\}})l.
\end{equation}
(Note: Here, and following, when $A$ is a finite set of functions, we will write $P_A$ to denote the more cumbersome $P_{\spam A}$.)

Some useful integrals that we will need later are
\begin{equation}\label{orthoone}
\|\tilde\phi^n\|^2=\frac{ (n-2)!(n+2)!}{16(2n-1)!!(2n+1)!!}, 
\end{equation}
\begin{equation}\label{orthotwo}
  \ip{r,\tilde\phi^n}=\frac{(n-2)!}{4(2n-1)!!},\ \ip{l,\tilde\phi^n}=(-1)^n\ip{r,\tilde\phi^n},
\end{equation}
and
\begin{equation}\label{r0l0}
\ip{r_n,l_n}=(-1)^{n+1}\frac{1}{n(n+1)(n+2)}.
\end{equation}
Here we use the notation that $(2n-1)!!=(2n-1)(2n-3)\cdots(1)$. These
formulas may be obtained from \cite{DGHorpoly} (cf., Equation~2.3 with $m=2$; Equation~2.5 with $i=0$, $k=0$;
and Equation~5.3 where $\tilde\phi^n(x)=\frac{1}{4}\phi_n^0(2x-1)\chi_{[0,1)}(x)$, 
$r(x)=\frac{1}{2} r_0^0(2x-1)\chi_{[0,1)}(x)$ and $l(x)=\frac{1}{2} l_0^0(2x-1)\chi_{[0,1)}(x)$).
Since $r_n$ and $l_n$ are not orthogonal we add a function $z^n$
chosen so that when it is projected out from the above two functions they
become orthogonal. To accomplish this set, 
\begin{equation}\label{zn}
z^n=\alpha_n\tilde\phi^{n+1}+\tilde\phi^{n+3},
\end{equation}
where $\alpha_n$ is fixed so that 
\begin{equation}\label{proj}
\|z^n\|^{2}\ip{r_n,(I-P_{\{z^n\}})l_n}=\ip{r_n,l_n}\|z^n\|^2-(-1)^{n+1}\ip{r,z^n}^2=0,\end{equation}
where
\eqref{orthotwo} has been used to obtain the above equation.
The choice of $z^n$ above is for simplicity and  to preserve
the symmetry properties of the basis being constructed.
Substituting \eqref{zn} into \eqref{proj} and using the integrals computed above yields the quadratic equation 
$$
\alpha_n^2+\left(\frac{2(n+1)}{2n+5}\right)\alpha_n+\left(\frac{(n+2)(n+1)(n^2-5n-30)}{(2n+7)((2n+5)^2(2n+3)}\right)=0.
$$  Choosing the positive square root in the quadratic formula we obtain the solution
 \begin{equation}
\alpha_n=-\frac{n+1}{2n+5}+\frac{n+3}{2n+5}\sqrt{\frac{3(n+1)(n+3)}{(2n+7)(2n+3)}}.
\end{equation}
Now we set 
$$
r^n =(I-P_{\{\tilde\phi^2,\ldots,\tilde\phi^n,z^n\}})r=(I-P_{\{z^n\}})r_n $$ and 
$$
l^n=(I-P_{\{\tilde\phi^2,\ldots,\tilde\phi^n, z^n\}})l=(I-P_{\{z^n\}})l_n .
$$
Let $\tilde\Pi^n=\text{span}\{\Pi^n,z^n\}$. The above construction shows,
\begin{lemma}\label{pin}
The functions $\{r^n,l^n,z^n,\tilde\phi^i\ i=2,\ldots,n\}$ form
an orthogonal basis for  $\tilde\Pi^n$. Furthermore $\tilde\Pi^n\subset
\tilde\Pi^{n+3}$.
\end{lemma}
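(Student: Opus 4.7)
The plan is to verify orthogonality of the proposed basis first, then count dimensions to see it spans $\tilde\Pi^n$, and finally check the inclusion separately.

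First I would verify mutual orthogonality of the family $\{r^n,l^n,z^n,\tilde\phi^i\ i=2,\ldots,n\}$. The mutual orthogonality of the $\tilde\phi^i$ is already given, since they are (up to the factor $x(1-x)\chi_{[0,1)}$) ultraspherical polynomials. Since $z^n = \alpha_n \tilde\phi^{n+1} + \tilde\phi^{n+3}$ is a linear combination of members of the same orthogonal family of indices strictly larger than $n$, it is automatically orthogonal to each $\tilde\phi^i$ for $2\le i\le n$. Both $r^n$ and $l^n$ are defined as projections onto the orthogonal complement of $\spam\{\tilde\phi^2,\ldots,\tilde\phi^n,z^n\}$, so they are orthogonal to each $\tilde\phi^i$ and to $z^n$ by construction. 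The only nontrivial orthogonality is $r^n\perp l^n$, and this is exactly what the choice of $\alpha_n$ was engineered to produce: using the identity $(I-P_{\{z^n\}})^2=(I-P_{\{z^n\}})$ and self-adjointness of projection,
\[
\ip{r^n,l^n}=\ip{(I-P_{\{z^n\}})r_n,(I-P_{\{z^n\}})l_n}=\ip{r_n,(I-P_{\{z^n\}})l_n},
\]
and the right-hand side vanishes by \eqref{proj}, provided $\|z^n\|\ne 0$ (which follows from \eqref{orthoone} since $z^n$ is a nontrivial combination of two orthogonal nonzero functions).

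Next I would verify that all $n+2$ functions $r^n,l^n,z^n,\tilde\phi^2,\ldots,\tilde\phi^n$ lie in $\tilde\Pi^n=\spam\{\Pi^n,z^n\}$. Each $\tilde\phi^i$ with $2\le i\le n$ has degree $i\le n$ so lies in $\Pi^n$; $z^n$ is in $\tilde\Pi^n$ by definition. Both $r$ and $l$ are polynomials of degree one on $I$, hence belong to $\Pi^n$, and $r^n$ (resp.\ $l^n$) equals $r$ (resp.\ $l$) minus a linear combination of $\tilde\phi^2,\ldots,\tilde\phi^n,z^n$, all of which are in $\tilde\Pi^n$. Since $z^n$ has degree $n+3$ and so does not lie in $\Pi^n$, $\dim\tilde\Pi^n=\dim\Pi^n+1=n+2$. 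We have exhibited $n+2$ mutually orthogonal (hence linearly independent) elements of $\tilde\Pi^n$, so they form an orthogonal basis of $\tilde\Pi^n$.

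Finally, for the inclusion $\tilde\Pi^n\subset\tilde\Pi^{n+3}$, I would note that $\Pi^n\subset\Pi^{n+3}\subset\tilde\Pi^{n+3}$ trivially, and that $z^n=\alpha_n\tilde\phi^{n+1}+\tilde\phi^{n+3}$ has degree at most $n+3$, so $z^n\in\Pi^{n+3}\subset\tilde\Pi^{n+3}$. Thus $\tilde\Pi^n=\spam\{\Pi^n,z^n\}\subset\tilde\Pi^{n+3}$. The only real computation in the argument is the identity $\ip{r^n,l^n}=\ip{r_n,(I-P_{z^n})l_n}$ and the appeal to \eqref{proj}; I expect no obstacle beyond keeping track of which projections have already been made and using self-adjointness of orthogonal projection to collapse the inner products.
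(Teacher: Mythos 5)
Your argument is correct and follows essentially the same construction-based reasoning as the paper (which simply asserts ``The above construction shows'' the lemma): orthogonality comes from the projection definitions together with the choice of $\alpha_n$ via \eqref{proj}, and spanning follows from the count $\dim\tilde\Pi^n=n+2$. The only detail you leave implicit is that $r^n$ and $l^n$ are nonzero (needed for ``mutually orthogonal $\Rightarrow$ linearly independent''), which holds because every element of $\spam\{\tilde\phi^2,\ldots,\tilde\phi^n,z^n\}$ is divisible by $x(1-x)$ on $[0,1)$ while $r$ and $l$ are not.
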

 We now construct a continuous orthogonal basis centered on
 $\mathbf{a}$. Let $\sigma_{a}$ be the affine function taking $a$ to 0 and
$a_{+}$ to 1. For each $a< \sup J$ let
 $\tilde\phi^n_a=\tilde\phi^n\circ\sigma_a$, 
 $z^n_a=z^n\circ\sigma_a$, $r^n_a=r^n\circ\sigma_a$, and $l^n_a=l^n\circ\sigma_a$. For $a<\sup J$ define
\[
\breve{\Omega}^n_{a}=\left\{
\begin{array}
[c]{cc}%
\{l^n_a, z^n_{a},\tilde\phi^2_a,\ldots,\
\tilde\phi^n_a\} & \text{if
}a=\inf J\\
\{z^n_{a},\tilde\phi^2_a,\ldots,
\tilde\phi^n_a\} & \text{if }\inf J<a<a_{+}<\sup J\\
\{r^n_a,z^n_{a},\tilde\phi^2_a,\ldots,\
\tilde\phi^n_a\} & \text{if
}a_{+}=\sup J
\end{array}
\right.  \text{.}%
\]
and, for $\inf J<a<\sup J$, 
$$\bar{\Omega}^n_{a}=\{r^n_{a_{-}}+l^n_{a}\}.$$ 
From the above construction we see that $\breve{\Omega}^n_{a}$ and $\bar{\Omega}^n_{a}$, for $a\in \mathbf{a}$, 
form an orthogonal basis $\Omega^n_{\mathbf{a}}$ centered on the knot sequence $\mathbf{a}$ and that
\begin{equation}\label{nestone}
S_n^0(\mathbf{a})\subset S(\Omega^n_{\mathbf{a}})\subset S^0_{n+3}(\mathbf{a})\subset
S(\Omega^{n+3}_{\mathbf{a}})
\end{equation}
 which implies
\begin{proposition} \label{propMRAPolyRep} Let $\mathbf{a}$ be a knot sequence and, for a fixed   $\kappa\in\{1,2,3\}$, let
  $V^k:=S(\Omega^{3k+\kappa}_{\mathbf{a}}), k=0,1, 2,\ldots$. Then   $V^k\subset V^{k+1}$  and
$\bigcup_{k=0}^{\infty} V^k$ is dense in $L^2(\mathbf{J})$.
\end{proposition}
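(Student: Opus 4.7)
The nestedness $V^k\subset V^{k+1}$ follows immediately by specializing the chain \eqref{nestone} to $n=3k+\kappa$:
$$V^k=S(\Omega^{3k+\kappa}_{\mathbf{a}})\subset S^0_{3k+\kappa+3}(\mathbf{a})\subset S(\Omega^{3(k+1)+\kappa}_{\mathbf{a}})=V^{k+1}.$$
The substantive part is density. Since \eqref{nestone} also gives $S_{3k+\kappa}^0(\mathbf{a})\subset V^k$, the plan is to reduce the density of $\bigcup_k V^k$ to the more tractable statement that $\bigcup_{n}S_n^0(\mathbf{a})$ is dense in $L^2(J)$.

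To prove this latter density, I would exploit the orthogonal Hilbert-space decomposition
$$L^2(J)=\bigoplus_{a\in\mathbf{a},\ a_+<\infty}L^2((a,a_+))$$
induced by the partition of $J$ (up to measure zero) by the consecutive intervals $(a,a_+)$, which is legitimate because $\mathbf{a}$ has no cluster point in $J$ and $\inf\mathbf{a}=\inf J$, $\sup\mathbf{a}=\sup J$. Since finite sums drawn from these components are dense, it suffices to approximate, for each such pair $a<a_+$ and each $f\in L^2((a,a_+))$ (extended by zero outside $[a,a_+]$), in $L^2(J)$-norm by elements of $\bigcup_n S_n^0(\mathbf{a})$. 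The crucial observation is that any polynomial $p$ of degree at most $n$ on $[a,a_+]$ satisfying $p(a)=p(a_+)=0$, extended by zero, is continuous on $J$ and therefore lies in $S_n^0(\mathbf{a})$.

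Thus the problem reduces to approximating $f\in L^2((a,a_+))$ in $L^2$-norm by polynomials vanishing at both endpoints, which I would do in three standard steps: first approximate $f$ by a continuous $g$ on $[a,a_+]$ with $g(a)=g(a_+)=0$; then invoke the Weierstrass theorem to find a polynomial $q$ with $\|g-q\|_\infty<\varepsilon$; finally subtract from $q$ the unique affine function agreeing with $q$ at $a$ and at $a_+$, which has sup norm $O(\varepsilon)$ since $|q(a)|,|q(a_+)|<\varepsilon$, producing a polynomial $p$ that vanishes at both endpoints and remains uniformly close to $g$. Patching such approximations across finitely many subintervals yields, for $n$ sufficiently large, an element of $S_n^0(\mathbf{a})\subset V^k$ within any prescribed $L^2$ tolerance. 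The only subtlety is this endpoint bookkeeping, needed precisely so that the zero-extended piecewise approximation is continuous across every knot; the remainder of the argument is routine.
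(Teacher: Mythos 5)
Your proof is correct and follows essentially the same route as the paper's: both parts reduce via the chain \eqref{nestone} to the nestedness of the spaces $S(\Omega^{n}_{\mathbf{a}})$ and to the density of $\bigcup_n S_n^0(\mathbf{a})$ in $L^2(J)$. The only difference is that you supply a complete Weierstrass-plus-endpoint-correction proof of that last density statement, which the paper simply invokes as a known fact about continuous piecewise polynomials with breakpoints in $\mathbf{a}$; your filled-in argument is sound.
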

\begin{proof}
The first two statements follow from equation~\eqref{nestone}. The same
equation also implies that
$\bigcup_{k=0}^{\infty}S_{3k+\kappa}^0(\mathbf{a})\subset\bigcup_{k=0}^{\infty}
V^k$. The third statement now follows by the density of the piecewise 
continuous polynomials with breakpoints on $\mathbf{a}$ in  $L^2(\mathbf{J})$.
\end{proof}

\subsection{Continuous, orthogonal, piecewise quadratic, bases centered on nested knot sequences.
\label{ex1}}

Let $J$ be an interval in $\mathbf{R}$, $\mathbf{a}$ a knot sequence in $J$,
and let $S_{2}^{0}(\mathbf{a})$ denote the spline space consisting of
continuous piecewise quadratic spline functions defined on $J~$with break
points in $\mathbf{a}$. Then $S_{2}^{0}(\mathbf{a})=S(\Phi)$ for a basis
$\Phi$ centered on $\mathbf{a}$. For $a<\sup J$, we define the piecewise
quadratic function $\breve{\phi}_{a}(x)=(x-a)(a_{+}-x)\chi_{\lbrack a,a_{+}%
]}(x)$, where $\chi_{A}$ denotes the characteristic function of a set
$A\subset\mathbf{R}$. Then for $a<\sup J$,%
\[
\breve{V}_{a}=\left\{
\begin{array}
[c]{cc}%
\text{span }\{\breve{\phi}_{a},(\frac{a_{+}-\,\cdot}{a_{+}-a})\chi_{\lbrack
a,a_{+}]}\} & \text{if }a=\inf J\\
\text{span }\{\breve{\phi}_{a}\} & \text{if }\inf J<a<a_{+}<\sup J\\
\text{span }\{\breve{\phi}_{a},(\frac{\cdot\,-a}{a_{+}-a})\chi_{\lbrack
a,a_{+}]}\} & \text{if }a_{+}=\sup J
\end{array}
\right.  \text{.}%
\]
Also, for $\inf J<a<\sup J$,  $\bar{\Phi}_{a}$ can be chosen to consist of the
piecewise linear spline $\bar{\phi}_{a}$ that is 1 at $a$ and 0 at $b$ for
$b\neq a$, $b\in\mathbf{a}$.

We first construct a collection of orthogonal functions supported on the
interval $[0,1)$. These functions will then be used to construct an orthogonal 
continuous piecewise quadratic basis centered on the knot sequence
$\mathbf{a}$. This construction was first given in \cite{DGHsqueeze} (we warn
the reader that there are some typographical errors in the intermediate
computations given in that paper that we now take the opportunity to correct).

Let $r$ and $l$ be as in the previous example and let $q:=4rl$ denote the
\textquotedblleft quadratic bump\textquotedblright\ function of height one on
$[0,1)$. For $0<\theta<1$, let
\[
q_{0}^{\theta}(x):=q(x/\theta),\qquad q_{1}^{\theta}(x):=q\left(
(x-\theta)/(1-\theta)\right)  ,
\]
and
\[
h^{\theta}(x)=r(x/\theta)+l(\left(  x-\theta)/(1-\theta)\right)  .
\]

Now we look for a function $z^{\theta}\in$ span$\{q_{0}^{\theta},q_{1}%
^{\theta},h^{\theta}\}$ so that 
\begin{equation}
\left(  I-P_{\text{span}\{q,z^{\theta}\}}\right)r\perp l
\label{zeqn}%
\end{equation}
and $z^{\theta}\perp q$.

Note that $q$ is in the 3-dimensional space span$\{q_{0}^{\theta}%
,q_{1}^{\theta},h^{\theta}\}$. A basis for the 2-dimensional orthogonal
complement of $q$ in this space is given by (with help from \emph{Mathematica}%
)
\begin{align}
u_{0}  &  =(1-\theta)^{2}(2+3\theta)q_{0}^{\theta}+\theta^{2}(3\theta
-5)q_{1}^{\theta}\nonumber\\
u_{1}  &  =(-2+3\,\left(  -1+\theta\right)  \,\theta^{3})q_{0}^{\theta
}+(-2+3\,{\left(  -1+\theta\right)  }^{3}\,\theta)q_{1}^{\theta}+(\frac{16}%
{5}-12\,{\left(  -1+\theta\right)  }^{2}\,\theta^{2})h^{\theta}.\nonumber
\end{align}
Then $z^{\theta}$ must be of the form $z^{\theta}=c_{0}u_{0}+c_{1}u_{1}.$
Again using \emph{Mathematica} we find that condition \eqref{zeqn} is
equivalent to the following quadratic equation in the variable $c=c_{0}%
/c_{1}$
\begin{align}
0=  &  5\,\left(  4-5\,\left(  1-\theta\right)  ^{2}\,\theta^{2}\,\left(
15+\left(  1-\theta\right)  \,\theta\right)  \right) \label{IMRAeqn}\\
&  \qquad-20\,\left(  2+\theta\,\left(  9+13\,\theta\,\left(  -3+2\,\theta
\right)  \right)  \right)  \,c+4\,\left(  1+45\,\left(  1-\theta\right)
\,\theta\right)  \,c^{2}.\nonumber
\end{align}
The discriminant of this equation is $80\,{\left(  4-15\,{\left(
1-\theta\right)  }^{2}\,\theta^{2}\right)  }^{2}$ and thus is strictly
positive for $\theta\in(0,1)$ giving the two solutions
\[
c=\frac{20(2+\theta(9+13\theta(2\theta-3)))\pm4\sqrt{5}(4-15(1-\theta
)^{2}\theta^{2})}{8(1+45(1-\theta)\theta)}.
\]
Hence, for $0<\theta<1$, there is some $z^{\theta}$ in the span of $u_{0}$ and
$u_{1}$ such that \eqref{zeqn} holds. 
Let $r^{\theta}=\left(  I-P_{\text{span}\{q,z^{\theta}\}}\right)r$ and 
$l^{\theta}=\left(  I-P_{\text{span}\{q,z^{\theta}\}}\right)l$. Then
$\{r^{\theta},l^{\theta},q,z^{\theta}\}$ is an orthogonal
system spanning a four dimensional space of piecewise quadratic functions
continuous on $[0,1)$.

We are now prepared to construct a continuous orthogonal basis  centered on $\mathbf{a}$ using the functions $\{r^{\theta},l^{\theta},q,z^{\theta}\}$.   For $a\in\mathbf{a}$ and $a<\sup J$, let 
   $\theta_a$ be in $(0,1)$ and let 
$\boldsymbol{\theta}$ denote the sequence $(\theta_{a})_{a<\sup J}$. For each $a<\sup J$, let $z_{a}:=z^{\theta_{a}}\circ
\sigma_{a}$ where $\sigma_{a}$ is the affine function taking $a$ to 0 and
$a_{+}$ to 1. Let $$Z=Z_{\mathbf{a},\boldsymbol{\theta}}:=\text {clos}_{L^2(J)} \text{span } \big( \bigcup_{a<\sup J}\, z_{a} \big)%
,$$ then $V=S_{2}^{0}(a)+Z$ satisfies the hypotheses of
 Theorem~\ref{LemmaOrthCond} and so $V=S(\Omega)$ for some orthogonal basis
$\Omega=\Omega_{\mathbf{a},\boldsymbol{\theta}}$ centered on $\mathbf{a}$.
Specifically, $\Omega$ can be chosen as follows: For $a<\sup J$,%
\[
\breve{\Omega}_{a}=\left\{
\begin{array}
[c]{cc}%
\{l^{\theta_{a}}\circ\sigma_{a},q\circ\sigma_{a},z_{a}\} & \text{if
}a=\inf J\\
\{q\circ\sigma_{a},z_{a}\} & \text{if }\inf J<a<a_{+}<\sup J\\
\{r^{\theta_{a}}\circ\sigma_{a},q\circ\sigma_{a},z_{a}\} & \text{if
}a_{+}=\sup J
\end{array}
\right.  \text{.}%
\]
and, for $\inf J<a<\sup J$, $\bar{\Omega}_{a}=\{r^{\theta_{a_{-}}%
}\circ\sigma_{a_{-}}+l^{\theta_{a}}\circ\sigma_{a}\}$. Let
$\mathbf{b}=\mathbf{b}(\mathbf{a},\boldsymbol{\theta})$ denote the sequence with elements
$b_{a}:=(1-\theta_{a})a+\theta_{a}a_{+}$ between $a$ and $a_{+}$ for $a<\sup
J$. Then
\begin{equation}
S_{2}^{0}(\mathbf{a})\subset S(\Omega_{\mathbf{a},\boldsymbol{\theta}})\subset
S_{2}^{0}(\mathbf{a}\cup\mathbf{b}).\label{S02}%
\end{equation}

\begin{lemma}
\label{c0quadlemma} Let $\mathbf{a}^{0}\subset\mathbf{a}^{1}$ be knot
sequences in $J$, let $\boldsymbol{\theta}^{0}$(indexed by $\mathbf{a}^{0}$) and
$\boldsymbol{\theta}^{1}$ (indexed by $\mathbf{a}^{1}$) be parameter sequences
taking values in $(0,1)$. 
For  $a\in\mathbf{a}^{0}$ let $a_+$ denote the successor to $a$ in $\mathbf{a}^0$.   Let $\mathbf{b^{0}}:=\mathbf{b}(\mathbf{a^{0}%
},\boldsymbol{\theta}^{0})$, $\mathbf{b^{1}}:=\mathbf{b}(\mathbf{a^{1}},\boldsymbol{\theta}^{1})$ 
be such that $b_{a}^{0}%
\in\mathbf{a}^{1}$ whenever 
  $(a,a_{+})$ contains some point in $\mathbf{a}^{1}$, and 
$b_{a}^{0}\in\mathbf{b}^{1}$ whenever  $(a,a_{+})$ contains no point of $\mathbf{a}^{1}$.

Then $$S(\Omega_{\mathbf{a^{0}},\boldsymbol{\theta
^{0}}})\subset S(\Omega_{\mathbf{a^{1}},\boldsymbol{\theta^{1}}}).$$
\end{lemma}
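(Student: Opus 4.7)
The plan is to exploit the decomposition $S(\Omega_{\mathbf{a^0},\boldsymbol{\theta^0}}) = S_2^0(\mathbf{a^0}) + Z_{\mathbf{a^0},\boldsymbol{\theta^0}}$ and show each piece sits inside $S(\Omega_{\mathbf{a^1},\boldsymbol{\theta^1}})$. The first piece is immediate: since $\mathbf{a^0}\subset\mathbf{a^1}$, every continuous piecewise quadratic with breakpoints in $\mathbf{a^0}$ is also one with breakpoints in $\mathbf{a^1}$, so $S_2^0(\mathbf{a^0})\subset S_2^0(\mathbf{a^1})\subset S(\Omega_{\mathbf{a^1},\boldsymbol{\theta^1}})$ by \eqref{S02}. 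The real work is to show $z_a^0 := z^{\theta_a^0}\circ\sigma_a \in S(\Omega_{\mathbf{a^1},\boldsymbol{\theta^1}})$ for each $a\in\mathbf{a^0}$ with $a<\sup J$, where $\sigma_a$ sends $a\mapsto 0$ and $a_+\mapsto 1$ with $a_+$ the successor of $a$ in $\mathbf{a^0}$.

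I would split on the two cases covered by the hypothesis. First, suppose $(a,a_+)$ contains no point of $\mathbf{a^1}$; then $a$ and $a_+$ are consecutive in $\mathbf{a^1}$, so the unique element of $\mathbf{b^1}$ lying in $(a,a_+)$ is $b_a^1=(1-\theta_a^1)a+\theta_a^1 a_+$. The hypothesis forces $b_a^0 \in \mathbf{b^1}\cap(a,a_+)=\{b_a^1\}$, hence $\theta_a^0=\theta_a^1$ and $z_a^0$ coincides with the corresponding generator of $Z_{\mathbf{a^1},\boldsymbol{\theta^1}}$, which lies in $S(\Omega_{\mathbf{a^1},\boldsymbol{\theta^1}})$.

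Second, suppose $(a,a_+)$ contains at least one point of $\mathbf{a^1}$; then by hypothesis $b_a^0\in\mathbf{a^1}$. Recall that $z^{\theta_a^0}$ lies in $\spam\{q_0^{\theta_a^0},q_1^{\theta_a^0},h^{\theta_a^0}\}$, each of whose elements is continuous, piecewise quadratic on $[0,1)$ with breakpoints only at $0,\theta_a^0,1$, and vanishes at $0$ and $1$. Pulling back by $\sigma_a$ and extending by zero, $z_a^0$ is continuous, piecewise quadratic on $J$, vanishes outside $[a,a_+]$, and has breakpoints among $\{a,b_a^0,a_+\}\subset\mathbf{a^1}$. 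Therefore $z_a^0\in S_2^0(\mathbf{a^1})\subset S(\Omega_{\mathbf{a^1},\boldsymbol{\theta^1}})$.

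Combining the two cases with the first observation yields $S_2^0(\mathbf{a^0})+Z_{\mathbf{a^0},\boldsymbol{\theta^0}}\subset S(\Omega_{\mathbf{a^1},\boldsymbol{\theta^1}})$, which is the desired inclusion. The main obstacle is purely bookkeeping: one must verify that the hypothesis truly pins down $b_a^0$ in the two situations (either to the single candidate in $\mathbf{b^1}$, or to an actual knot in $\mathbf{a^1}$), so that the extra ``bubble'' function $z_a^0$ is accounted for either by an identical bubble in $Z_{\mathbf{a^1},\boldsymbol{\theta^1}}$ or by belonging outright to the finer spline space $S_2^0(\mathbf{a^1})$. No further orthogonality or refinement computation is needed beyond the structural decomposition \eqref{S02} and the explicit support of $z^\theta$.
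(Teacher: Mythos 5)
Your proof is correct and follows essentially the same route as the paper's: decompose $S(\Omega_{\mathbf{a}^{\epsilon},\boldsymbol{\theta}^{\epsilon}})=S_2^0(\mathbf{a}^{\epsilon})+Z^{\epsilon}$, note the spline parts nest, and then handle each generator $z_a^0$ by the same two-case analysis (either $b_a^0=b_a^1$ forces $z_a^0\in Z^1$, or $b_a^0\in\mathbf{a}^1$ puts $z_a^0$ into $S_2^0(\mathbf{a}^1)$). Your write-up is in fact slightly more explicit than the paper's about why the hypothesis pins down $b_a^0$ in each case, but the argument is the same.
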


\begin{proof}
Let $Z^{\epsilon}=Z_{\mathbf{a}^{\epsilon},\theta^{\epsilon}}$ for
$\epsilon=0,1$. Then $S(\Omega_{\mathbf{a}^{\epsilon},\theta^{\epsilon}%
})=S_{0}^{2}(\mathbf{a}^{\epsilon})+Z^{\epsilon}$. Since $S_{0}^{2}%
(\mathbf{a}^{0})\subset S_{0}^{2}(\mathbf{a}^{1})$ it suffices to show that
$Z^{0}\subset S_{0}^{2}(\mathbf{a}^{1})+Z^{1}$. Let $a\in\mathbf{a}^{0}$ with
$a<\sup J$. If $(a,a_{+})$  contains no point
of $\mathbf{a}^{1}$ then $a_+$ 
 is also the successor of $a$ in $\mathbf{a}^{1}$. Otherwise, $b_{a}^{0}%
\in\mathbf{a}^{1}$ in which case $z_{a}^{0}\in S(\mathbf{a}^{1})$ and so, in
either case, we have $z_{a}^{0}\in S_{0}^{2}(\mathbf{a}^{1})+Z^{1}$ which
completes the proof.
\end{proof}

\begin{proposition} \label{c0quadMRA} Suppose $(\mathbf{a}^k)_{k\in \Z}$ is a sequence of nested knot sequences in an interval $J$ and $(\boldsymbol{\theta}^k)_{k\in \Z}$ is a sequence of parameter sequences taking values in (0,1)   such that for each $k\in \Z$, $(\mathbf{a}^k,\boldsymbol{\theta}^k)$ and $(\mathbf{a}^{k+1},\boldsymbol{\theta}^{k+1})$ satisfy the hypotheses of Lemma~\ref{c0quadlemma}.   Let
$V^k:=S(\Omega_{\mathbf{a}^k,\boldsymbol{\theta}}^k)$.  The following statements hold.
\begin{enumerate}
\item  
$ \displaystyle V^k\subset V^{k+1}$ for $(k\in \Z).$
\item If $\bigcup_k \mathbf{a}^k$ is dense in $J$, then $\bigcup_k V^k$ is dense in $L^2(J)$. 
\item  Let $\boldsymbol{\alpha}:=\bigcap_k \mathbf{a}^k\cup \mathbf{b}({a}^k,\boldsymbol{\theta}^k)$.  Then $\bigcap_k V^k\subset S_2^0(\boldsymbol{\alpha})$. 
\end{enumerate}
 \end{proposition}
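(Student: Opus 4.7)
The plan is to handle the three claims in order. For (1), I would apply Lemma~\ref{c0quadlemma} directly at each level: the hypothesis that $(\mathbf{a}^k,\boldsymbol{\theta}^k)$ and $(\mathbf{a}^{k+1},\boldsymbol{\theta}^{k+1})$ satisfy that lemma's conditions yields $V^k \subset V^{k+1}$ immediately. For (2), the approach is to combine the inclusion $S_2^0(\mathbf{a}^k) \subset V^k$ given by \eqref{S02} with the standard fact that $\bigcup_k S_2^0(\mathbf{a}^k)$ is dense in $L^2(J)$ whenever $\bigcup_k \mathbf{a}^k$ is dense in $J$ (continuous piecewise quadratic functions on arbitrarily fine partitions are $L^2$-dense); the density of $\bigcup_k V^k$ then follows from the sandwich.

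For (3), I would use the other half of \eqref{S02}, namely $V^k \subset S_2^0(\mathbf{c}^k)$ where $\mathbf{c}^k := \mathbf{a}^k \cup \mathbf{b}(\mathbf{a}^k,\boldsymbol{\theta}^k)$. First I would observe that the hypotheses of Lemma~\ref{c0quadlemma} force the $\mathbf{c}^k$ to be nested in $k$: we already have $\mathbf{a}^k\subset \mathbf{a}^{k+1}$, while the assumption on where each $b^k_a$ lands gives $\mathbf{b}^k \subset \mathbf{a}^{k+1} \cup \mathbf{b}^{k+1}$, so $\mathbf{c}^k \subset \mathbf{c}^{k+1}$ and $\boldsymbol{\alpha} = \bigcap_k \mathbf{c}^k$. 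Thus any $f \in \bigcap_k V^k$ lies in every $S_2^0(\mathbf{c}^k)$. The key step is then a local-to-global polynomial argument: given two consecutive points $\alpha < \alpha_+$ of $\boldsymbol{\alpha}$ and any $x \in (\alpha, \alpha_+)$, the fact that $x \notin \boldsymbol{\alpha}$ provides some index $k$ with $x \notin \mathbf{c}^k$, whence $x$ lies in the interior of an interval of $\mathbf{c}^k$ on which $f$ is a polynomial of degree at most $2$. Covering $(\alpha, \alpha_+)$ by such neighborhoods, the identity theorem for polynomials (two polynomials of degree at most $2$ that agree on a nonempty open set are identical), together with the connectedness of $(\alpha,\alpha_+)$, forces $f$ to be a single polynomial of degree at most $2$ on the entire interval. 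Combined with the global continuity and $L^2(J)$ membership inherited from any $S_2^0(\mathbf{c}^k)$, this places $f$ in $S_2^0(\boldsymbol{\alpha})$.

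The main obstacle is the local-to-global polynomial step in (3); parts (1) and (2) reduce essentially to quoting Lemma~\ref{c0quadlemma} and a standard density statement. A minor technicality is checking that $\boldsymbol{\alpha}$ meets the knot-sequence conventions needed for $S_2^0(\boldsymbol{\alpha})$ to be defined in the sense of the paper (in particular, that $\boldsymbol{\alpha}$ has no cluster point in $J$), but this is inherited from the standing hypotheses on the $\mathbf{a}^k$.
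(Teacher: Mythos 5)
Your proposal is correct and follows essentially the same route as the paper: part (1) from Lemma~\ref{c0quadlemma}, and parts (2) and (3) from the sandwich $S_2^0(\mathbf{a}^k)\subset V^k\subset S_2^0(\mathbf{a}^k\cup \mathbf{b}(\mathbf{a}^k,\boldsymbol{\theta}^k))$ given by \eqref{S02}. The only difference is that you spell out the local-to-global polynomial argument behind the identification $\bigcap_k S_2^0(\mathbf{a}^k\cup \mathbf{b}(\mathbf{a}^k,\boldsymbol{\theta}^k))= S_2^0(\boldsymbol{\alpha})$, which the paper asserts without detail; your justification of that step is sound.
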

 
\begin{rem}  In the above, $\boldsymbol{\alpha}$ may not be a knot sequence in $J$, i.e., it may be empty, a singleton, or it may be that  $\sup \boldsymbol{\alpha}< \sup J$ or $\inf \boldsymbol{\alpha}> \inf J$.  In these cases we define 
 $S_2^0(\boldsymbol{\alpha})$ to be the collection of continuous functions on $J$ whose restriction to any interval in $J\setminus \boldsymbol{\alpha}$ is in $\Pi^d$.  For example $S_2^0(\emptyset)=\Pi^d$. We also mention that if $\bigcup_k \mathbf{a}^k$ is dense in $J$, then $\mathbf{b}({a}^k,\boldsymbol{\theta}^k)\subset \mathbf{a}^\ell$ for some $\ell$ and so $\boldsymbol{\alpha}:=\bigcap_k \mathbf{a}^k$ in this case. 
 \end{rem}

\begin{proof}
Part (1) follows directly from Lemma~\ref{c0quadlemma}.  
Equation \eqref{S02} gives
$$S_2^0(\mathbf{a}^k)\subset V_k \subset S_2^0(\mathbf{a}^k\cup \mathbf{b}({a}^k,\boldsymbol{\theta}^k)) $$
which shows (a) $\bigcup_k S_2^0(\mathbf{a}^k)\subset\bigcup_k V^k  $ and (b) $\bigcap_k V^k \subset \bigcap_k S_2^0(\mathbf{a}^k\cup \mathbf{b}({a}^k,\boldsymbol{\theta}^k))= S_2^0(\boldsymbol{\alpha})$
which implies parts (2) and (3). 
 \end{proof}

\begin{rem}  With $V^k$ as in Proposition~\ref{c0quadMRA}, let $W^k$ denote the orthogonal complement of $V^k$ in $V^{k+1}$ and we have the usual decompostion
$$L^2(J)= (\bigcap_k V^k)\oplus\bigoplus_k W^k.$$
\end{rem}

\section{Wavelets}

\label{Wavelets}

In this section we consider nested spaces $V^0\subset V^1$   generated by orthogonal bases $\Phi^0$ and $\Phi^1$, respectively, that are centered on the same  knot sequence $\mathbf{a}$.   For example, if $(\mathbf{a}^0,\boldsymbol{\theta}^0)$ and $(\mathbf{a}^1,\boldsymbol{\theta}^1)$ satisfy the conditions of Lemma~\ref{c0quadlemma}, then $V^0=S(\Omega_{\mathbf{a}^0,\boldsymbol{\theta}^0})$
and $V^1=S(\Omega_{\mathbf{a}^1,\boldsymbol{\theta}^1})$ are such spaces centered on the {\em same} knot sequence $\mathbf{a}^0$.   A second example, for $n\ge 2$, is given by  $V^0=S(\Omega_{\mathbf{a}}^n)$ and $V^1=S(\Omega_{\mathbf{a}}^{n+3})$  where $\Omega_{\mathbf{a}}^n$ is 
as in Section~\ref{ex1.2}. 

Our main result is that 
the \textbf{wavelet space} $W:=V^1\ominus V^0$ has an 
orthogonal basis $\Psi$ centered on $\mathbf{a}$.

\begin{theorem} \label{ThmWavelets}
Suppose $\Phi^{0}$ and $\Phi^{1}$ are orthogonal bases centered on a common knot sequence $\mathbf{a}$ 
such that $V^0\subset V^1$ where $V^0=S(\Phi^0)$ and $V^1=S(\Phi^1)$. Then there exists an
orthogonal basis $\Psi$ centered on $\mathbf{a}$ such that $W=S(\Psi)$ satisfies $V^1=V^0\oplus W$.
\end{theorem}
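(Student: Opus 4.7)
The plan is to verify the hypotheses $(\hat a)$--$(\hat c)$ of Lemma~\ref{LemmaSpace2Basis} for $W=V^1\ominus V^0$ to produce a basis $\Psi$ centered on $\mathbf{a}$ with $S(\Psi)=W$, and then to verify the orthogonality condition \eqref{EqnOrthCond} of Lemma~\ref{LemmaOrthCond} so that $\Psi$ may be chosen orthogonal.

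Condition $(\hat a)$ is immediate since $W_a\subseteq V^1_a$, which is finite-dimensional. For $(\hat c)$, suppose $f\in W$ vanishes on $[a,a_+]$; apply $(\hat c)$ to $V^1$ to write $f=f_L+f_R$ with $f_L\in V^1_{(-\infty,a]}$ and $f_R\in V^1_{[a_+,\infty)}$. For any $\phi\in\Phi^0_b$ (whose support lies in $[b_-,b_+]$), either $b_+\le a_+$, in which case the supports of $\phi$ and $f_R$ meet in at most a single point and so $\langle f_R,\phi\rangle=0$, or else $b_-\ge a$, which forces $\langle f_L,\phi\rangle=0$; these two cases are exhaustive. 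In either case, since $\langle f,\phi\rangle=0$, both inner products vanish, so $f_L,f_R\perp V^0$ and hence lie in $W$.

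The main obstacle is $(\hat b)$: $W=\overline{\operatorname{span}\bigcup_{a\in\mathbf{a}}W_a}$. Continuity of $I-P_{V^0}$ together with $(\hat b)$ for $V^1$ give $W=(I-P_{V^0})V^1=\overline{\operatorname{span}\bigcup_a(I-P_{V^0})V^1_a}$, so it suffices to show that each $(I-P_{V^0})V^1_a$ lies in $\operatorname{span}(W_{a_-}\cup W_a\cup W_{a_+})$. For $g\in V^1_a$, the orthogonal expansion of $P_{V^0}g$ involves only the basis elements of $V^0$ whose support meets $[a_-,a_+]$, namely the generators of $V^0_{a_-}\cup V^0_a\cup V^0_{a_+}$; thus $(I-P_{V^0})g$ is supported in $[a_{--},a_{++}]$. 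I plan to decompose $(I-P_{V^0})g$ by first producing an $h_L\in W_{a_-}$ agreeing with it on $[a_{--},a_-]$, then applying the already-verified $(\hat c)$ for $W$ to the remainder (which vanishes on $[a_{--},a_-]$) to split off a middle piece in $W_a$, and finally attributing the rest to $W_{a_+}$. The existence of the matching $h_L$ is a finite-dimensional linear algebra statement comparing the trace of $W_{a_-}$ on $[a_{--},a_-]$ with the full trace space; this is the crux of the argument.

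With $(\hat a)$--$(\hat c)$ in hand, Lemma~\ref{LemmaSpace2Basis} yields a basis $\Psi$ centered on $\mathbf{a}$ with $S(\Psi)=W$. To arrange orthogonality via Lemma~\ref{LemmaOrthCond}, one checks $(I-P_{\breve W_a})W_a\perp W_{a_+}$ for every $a\in\mathbf{a}$. This follows by combining the orthogonality condition $(I-P_{\breve V^1_a})V^1_a\perp V^1_{a_+}$ (which holds since $\Phi^1$ is orthogonal, by Lemma~\ref{LemmaOrthCond}) with the identification $\breve V^1_a\ominus\breve W_a=P_{\breve V^1_a}(V^0)$ (a consequence of $\breve W_a=\breve V^1_a\cap W$ and finite-dimensional linear algebra) and the hypothesis $W\perp V^0$. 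Lemma~\ref{LemmaOrthCond} then delivers the orthogonal $\Psi$ claimed.
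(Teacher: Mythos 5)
Your overall strategy---verify ($\hat a$)--($\hat c$) of Lemma~\ref{LemmaSpace2Basis} for $W=V^1\ominus V^0$ and then check condition \eqref{EqnOrthCond} of Lemma~\ref{LemmaOrthCond}---is the same as the paper's, and your treatments of ($\hat a$) and ($\hat c$) are correct (your case split on $b_+\le a_+$ versus $b_-\ge a$ is a valid, slightly more elementary variant of the paper's argument, which instead uses $\bar V^0_a\perp\bar V^0_{a_+}$). But the proof has a genuine gap at exactly the point you flag as ``the crux'': you never establish that $(I-P_{V^0})V^1_a\subset W_{a_-}+W_a+W_{a_+}$. Your proposed route---find $h_L\in W_{a_-}$ whose restriction to $[a_{--},a_-]$ matches that of $(I-P_{V^0})g$---is not a routine finite-dimensional comparison of trace spaces; there is no a priori reason the relevant restriction lies in the trace of $W_{a_-}$ on $[a_{--},a_-]$, and proving that it does is precisely the hard content of the theorem. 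The paper supplies the missing structure by introducing the spaces $A_a^{\pm}$ of \eqref{Adef} (the projections of $\bar V^0_a$ onto $\breve V^1_{a}$ and $\breve V^1_{a_-}$) and proving Lemma~\ref{LemPerp}: $P_{\breve V^1_a}V^0=\breve V^0_a\oplus A_a^+\oplus A_{a_+}^-$ with mutually orthogonal summands. From this one gets the clean decomposition $\breve V^1_a=\breve V^0_a\oplus A_a^+\oplus A_{a_+}^-\oplus\breve W_a$ and the explicit inclusions $(I-P_{V^0})A_a^+=(I-P_{\bar V^0_a})A_a^+\subset(\bar V^0_a+A_a^+)\cap (V^0)^\perp\subset W_a$ (and its mirror), which is what actually yields \eqref{V1aV0perp}. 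Without some substitute for Lemma~\ref{LemPerp}, condition ($\hat b$) is unproved.

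The same missing lemma undermines your orthogonality step. The identification $\breve V^1_a\ominus\breve W_a=P_{\breve V^1_a}V^0$ is fine, and for $u\in W_a$, $v\in W_{a_+}$ the $V^1$ orthogonality condition does reduce $\langle (I-P_{\breve W_a})u,v\rangle$ to $\langle P_{G}u,P_{G}v\rangle$ with $G=P_{\breve V^1_a}V^0$; but ``the hypothesis $W\perp V^0$'' does not finish this, because elements of $G$ are projections of $V^0$ into $\breve V^1_a$ and are not themselves in $V^0$. To conclude you need the orthogonal splitting $G=\breve V^0_a\oplus A_a^+\oplus A_{a_+}^-$ together with the facts $W_a\perp A_{a_+}^-$ and $W_{a_+}\perp A_a^+$ (which follow from \eqref{wavePV1b} and the orthogonality of $\Phi^1$, as in the paper's inclusions $W_a\subset\breve V^1_{a_-}\oplus\bar V^1_a\oplus A_a^+\oplus\breve W_a$ and $W_{a_+}\subset A_{a_+}^-\oplus\breve W_a\oplus\breve V^1_{a_+}\oplus\bar V^1_{a_+}$). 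In short: both of the two substantive steps of your outline are deferred to the same unproven structural fact, so you should either prove an analogue of Lemma~\ref{LemPerp} or cite it explicitly.
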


\begin{proof}
 The spaces $S(\Phi^0)$ and $S(\Phi^1)$ satisfy the conditions in Theorems~\ref{LemmaSpace2Basis} and~\ref{LemmaOrthCond}.  Let $V^0 := S(\Phi^0)$ and $V^1 := S(\Phi^1)$.  For each $a\in\mathbf{a}$, define 
 $$\bar V^\epsilon_a:=\left(  I-P_{\breve{V}^\epsilon_{a_{-}}\oplus\breve{V}^\epsilon_{a}}\right)V^\epsilon_a, \qquad \epsilon=0,1$$    as in  Theorem~\ref{LemmaOrthCond}.  It follows from (\ref{EqnOrthCondAlt})  that  $\bar V^\epsilon_a \perp \bar V^\epsilon_{a_+}$ and  $V^\epsilon_a = \breve V^\epsilon_{a_{-}} \oplus \bar V^\epsilon_a \oplus \breve V^\epsilon_a$   for all $a\in\mathbf{a}$ and $\epsilon=0,1$.   Letting $\bar V^\epsilon:=S(\bar \Phi^\epsilon)$ and $\breve V^\epsilon:=S(\breve \Phi^\epsilon)$, we have $V^\epsilon=\bar V^\epsilon\oplus\breve V^\epsilon$  for $\epsilon=0,1$. (Note: Also $\bar V^0_a \perp \bar V^1_{a_+}$, which is a simple consequence of the fact that $\Phi^{0}$ and $\Phi^{1}$ are orthogonal bases centered on $\mathbf{a}$ and $V^0\subset V^1$.)

Let  $W:=V^1\ominus V^0$.  First we describe the spaces $\breve W_a:=W_{[a,a_+]}=\breve{V}^1_a\cap (V^0)^\perp$ and
$W_a:=W_{[a_-,a_+]}$ for $a\in \mathbf{a}$.  Then we verify that $W$ satisfies the 
hypotheses of Theorems~\ref{LemmaSpace2Basis} and~\ref{LemmaOrthCond}. 

For   $a\in\mathbf{a}$, observe that $\bar{V} _a^{0}\subset V _a^{0}\subset V _a^{1}%
=\breve{V} _{a_{-}}^{1}\oplus\bar{V} _a^{1}\oplus\breve{V} _a^{1}$, and thus 
\begin{equation}\label{wavePV1b}
\bar{V} _a^{0}\subset P_{\breve{V} _{a_{-}}^{1}}\bar{V} _a 
^{0}\oplus P_{\bar{V} _a^{1}}\bar{V} _a^{0}\oplus P_{\breve{V} 
 _a^{1}}\bar{V} _a^{0}\subset A_a^-\oplus \bar{V} _a^{1}\oplus A_a^+
 \end{equation}
 where, for $a\in \mathbf{a}$, 
\begin{equation}\label{Adef}
A_a^+:=P_{\breve{V} _{a}^{1}}\bar{V} _{a}^{0} \text{ and }A_a^-:=P_{\breve{V} _{a_-}^{1}}\bar{V} _{a}^{0}.
\end{equation}  

\begin{lemma}\label{LemPerp}
For $a\in\mathbf{a}$, the spaces $\breve{V} _a^0$, $A_a^+$, and $A_{a_+}^-$ are mutually orthogonal subspaces of $\breve{V} _a^1$ and 
$P_{\breve{V}^1_a}V^0=\breve{V} _a^{0}\oplus A_a^+\oplus A_{a_+}^-.$
\end{lemma}
\begin{proof} Since $\bar V^0_a,\bar V^0_{a_+}  \perp  \breve V^0_a $ and $ \breve V^0_a \subset  \breve V^1_a $, 
we have   $\bar V^0_a,\bar V^0_{a_+} \perp  P_{\breve
{V} _a^1}\breve V _a^{0}$. Hence, $ A_a^+:=P_{\breve
{V} _a^{1}}\bar V^0_a$ and  $ A_{a_+}^-:=P_{\breve
{V} _a^{1}}\bar V^0_{a_+}$ are both perpendicular to $\breve V^0_a$ (note that if $U_0$ and $U_1$ are subspaces in a Hilbert space   and $P$ is an orthogonal projection  such that $U_0\perp   PU_1$ then $PU_0\perp   U_1$).   Let $f\in \bar V^0_a$. By equation~\eqref{wavePV1b} we have $f=P_{\breve V _{a_{-}}^1}f+ P_{\bar V_a^1}f+ P_{\breve V_a^1}f$.  Similarly, if $g\in \bar V^0_{a_+}$, 
  then  $g=P_{\breve V_a^1}g+ P_{\bar V^1_{a_+}}g+ P_{\breve V^1_{a_+}}g$.  Hence, we have $0=\ip{f,g}=\ip{P_{\breve V_a^1}f,P_{\breve V_a^1}g}$ by the orthogonality assumption of $V^1$.  Hence, $A_a^+\perp A_{a_+}^-$.    The final equation follows from 
  $P_{\breve{V}^1_a}V^0=P_{\breve{V}^1_a}(\breve{V} _a^{0}\oplus \bar{V}^1_{ a_-}\oplus \bar{V}^1_{ a}).$
  \end{proof}
  
 From Lemma~\ref{LemPerp} we obtain   
\begin{equation}\label{breveWa1}
\breve{W}_a=\breve{V}^1_a\cap (V^0)^\perp=\breve{V}^1_a\cap (P_{\breve{V}^1_a}V^0)^\perp=\breve{V}^1_a\cap (\breve{V} _a^{0}\oplus A_a^+\oplus A_{a_+}^-)^\perp,
\end{equation}
and so
\begin{equation}\label{breveV1a}
\breve{V}^1_a=\breve{V} _a^{0}\oplus A_a^+\oplus A_{a_+}^-\oplus \breve{W}_a.
\end{equation}

Then, again using  Lemma~\ref{LemPerp}, we have 
\begin{equation}\label{breveV1aV0perp}
\begin{split}
(I-P_{V^0})\breve{V}_a^1&\subset (I-P_{V^0})A_a^++(I-P_{V^0})A_{a_+}^-+\breve{W}_a \\
&= (I-P_{\bar{V}_a^0})A_a^++(I-P_{\bar{V}_{a_+}^0})A_{a_+}^-+\breve{W}_a \\
&\subset W_a+W_{a_+} 
\end{split}
\end{equation}
where we observe that  $(I-P_{\bar{V}_a^0})A_a^+\subset (\bar{V}_a^0+A_a^+)\cap V_0^\perp\subset W_a$
and similarly $(I-P_{\bar{V}_{a_+}^0})A_{a_+}^-\subset W_{a_+}$.
Furthermore, 
\begin{equation}\label{barV1aV0perp}
(I-P_{V^0})\bar{V}_a^1=(I-P_{V_a^0})\bar{V}_a^1\subset W_a, 
\end{equation}
and, hence, 
\begin{equation}\label{V1aV0perp}
(I-P_{V^0})V_a^1 \subset W_{a_-}+ W_{a}+W_{a_+}.
\end{equation}
Letting $\Sigma_{a\in \mathbf{a}}^*V_a$ denote $\text {clos}_{L^2(J)}\spam\left(\bigcup_{a\in \mathbf{a}}V_a\right))$ for a space $V$ centered on $\mathbf{a}$, we have
\begin{equation}
W=(I-P_{V^0})\Sigma^*_{a\in \mathbf{a}}V_a^1
=\Sigma^*_{a\in \mathbf{a}}(I-P_{V^0})V_a^1
=\Sigma^*_{a\in \mathbf{a}}W_a,
\end{equation}
 that is, $W$ satisfies $(\hat b)$ of Theorem~\ref{LemmaSpace2Basis}.

If $f\in W$ vanishes on $[a,a_{+}]$, then  $f=f_1+f_2$ where $f_1\in V^1_{(-\infty,a ]}$
and $f_2\in V^1_{[a_+,\infty )}$ (property $(\hat c)$ of Theorem~\ref{LemmaSpace2Basis} applied to
$V^1$).  Since $f\perp V^0$ and using the support properties of $f_1$ and $f_2$,
we have $$0=P_{V^0}f=P_{V^0}f_1+P_{V^0}f_2= P_{\bar V^0_a}f_1+P_{\bar V^0_{a_+}}f_2.$$
Since $\bar V^0_a\perp \bar V^0_{a_+}$ we must have $P_{\bar V^0_a}f_1=P_{\bar V^0_{a_+}}f_2=0$ and hence $f_1\in W_{(-\infty,a]}$ and $f_2\in W_{[a,\infty)}$ which verifies that $W$ satisfies property $(\hat c)$ of Theorem~\ref{LemmaSpace2Basis}.  Since $W_a\subset V^1_a$ is finite dimensional, then  Theorem~\ref{LemmaSpace2Basis} implies $W=S(\Psi)$ for some basis $\Psi$ centered on $\mathbf{a}$.

Observe that 
\begin{equation}
\begin{split}
W_a&\subset V^1_a \cap (\breve{V}^0_a\oplus \bar{V}^0_{a_+})^\perp\subset 
\breve{V}^1_{a_-}\oplus \bar{V}^1_a\oplus (\breve{V}^1_a \cap (\breve{V}^0_a\oplus \bar{V}^0_{a_+}))^\perp\\
&= \breve{V}^1_{a_-}\oplus \bar{V}^1_a\oplus A_a^+\oplus \breve{W}_a,
\end{split}
\end{equation}
and in the same way we have
\begin{equation}
W_{a_+}\subset  A_{a_+}^-\oplus\breve{W}_a\oplus\breve{V}^1_{a_+}\oplus \bar{V}^1_{a_+}.
\end{equation}
It then follows that $(I-P_{\breve{W}_a})W_a\perp W_{a_+}$ and so by  Theorem~\ref{LemmaOrthCond} we conclude that $W=S(\Psi)$ for some orthogonal basis centered on $\mathbf{a}$. 

We recall that in the case that  $J$ is not an open interval and  $a\in\mathbf{a}$ is an endpoint of $J$, say $a=\inf J$,  that $a_-=-\infty$ and that the spaces
$\breve V^\epsilon_{a_-}$, $\bar V^\epsilon_a$ are   trivial  for $\epsilon=0,1$.  It then follows that $\bar{W}_a=\{0\}$ whenever $a$ is an endpoint of $J$. 
\end{proof}
The spaces $A_a^\pm$ defined in \eqref{Adef} play central roles in the above proof. 
In the following corollary, we express $\breve{W}_a$ and $\bar{W}_a$ in terms of these spaces.

\begin{corollary}\label{WaveCor}
Suppose $V^0$, $V^1$, and $W$ are as in Theorem~\ref{ThmWavelets} and let $A_a^\pm$ be given by \eqref{Adef}.  Let $\bar W_a:=W_a\ominus({\breve{W}_{a_-}\oplus \breve{W}_a})$.  Then
\begin{equation}\label{breveWa}
\breve{W}_a=\breve{V}^1_a\cap (\breve{V} _a^{0}\oplus A_a^+\oplus A_{a_+}^-)^\perp,
\end{equation}
and 
\begin{equation}\label{barWa}
\bar{W}_a=\left(A_a^-\oplus \bar V_a^1 \oplus A_a^+\right)\cap \left( \bar V_a^0\right)^\perp.
\end{equation}
\end{corollary}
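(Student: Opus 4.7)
The first identity \eqref{breveWa} is exactly equation \eqref{breveWa1} already established in the proof of Theorem~\ref{ThmWavelets}, so the real content to prove is \eqref{barWa}. My plan is to give an orthogonal block description of $W_a$ in which $\breve W_{a_-}$ and $\breve W_a$ appear as direct summands; then \eqref{barWa} follows by orthogonal subtraction.

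The first step is to combine the orthogonal decomposition $V^1_a=\breve V^1_{a_-}\oplus \bar V^1_a\oplus \breve V^1_a$ (which holds because $V^1$ has an orthogonal basis centered on $\mathbf{a}$) with the four-term splitting \eqref{breveV1a} applied at both $a_-$ and $a$, producing the mutually orthogonal nine-term decomposition
$$V^1_a=\bigl(\breve V^0_{a_-}\oplus A_{a_-}^+\oplus A_a^-\oplus \breve W_{a_-}\bigr)\oplus \bar V^1_a\oplus \bigl(\breve V^0_a\oplus A_a^+\oplus A_{a_+}^-\oplus \breve W_a\bigr).$$
Orthogonality inside each $\breve V^1_b$ comes from Lemma~\ref{LemPerp}; the orthogonality among the three gross pieces $\breve V^1_{a_-}$, $\bar V^1_a$, $\breve V^1_a$ is built into the orthogonal basis of $V^1$.

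Next I would characterize $W_a=V^1_a\cap (V^0)^\perp$ by determining which of the nine components can be nonzero. A support argument (together with the orthogonality of the $V^0$-basis) shows that any $V^0$-element meeting $[a_-,a_+]$ in positive measure lies in $\breve V^0_{a_-}\oplus \bar V^0_{a_-}\oplus \bar V^0_a\oplus \breve V^0_a\oplus \bar V^0_{a_+}$, so $f\perp V^0$ reduces to perpendicularity against these five subspaces. The conditions $f\perp \breve V^0_{a_-}$ and $f\perp \breve V^0_a$ directly annihilate the $\breve V^0_{a_-}$ and $\breve V^0_a$ components of $f$. For $f\perp \bar V^0_{a_-}$ I would use $\bar V^1_{a_-}\perp V^1_a$ (which follows from \eqref{EqnOrthCondAlt} together with support) to reduce $\langle f,g\rangle$ to $\langle f,P_{\breve V^1_{a_-}}g\rangle$ for $g\in\bar V^0_{a_-}$, and then invoke Lemma~\ref{LemPerp} (applied at $a_-$) and $P_{\breve V^1_{a_-}}\bar V^0_{a_-}=A_{a_-}^+$ to conclude that the $A_{a_-}^+$ component of $f$ must vanish. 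A symmetric argument with $\bar V^0_{a_+}$ eliminates the $A_{a_+}^-$ component.

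The main obstacle is the treatment of $f\perp \bar V^0_a$, since $\bar V^0_a\subset V^1_a$ itself expands nontrivially in the nine pieces. Using orthogonality of the $V^0$-basis (which gives $\bar V^0_a\perp \breve V^0_{a_-},\breve V^0_a,\bar V^0_{a_-},\bar V^0_{a_+}$), the inclusion $\bar V^0_a\subset V^0$ combined with $\breve W_{a_-},\breve W_a\perp V^0$, and Lemma~\ref{LemPerp} applied at $a_-$ and $a$, I would show that each $g\in \bar V^0_a$ decomposes purely as $g=g_-+g_0+g_+$ with $g_-\in A_a^-$, $g_0\in \bar V^1_a$, $g_+\in A_a^+$. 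The condition $f\perp \bar V^0_a$ then becomes the single coupling constraint $f_3+f_5+f_7\perp \bar V^0_a$ on the components $f_3\in A_a^-$, $f_5\in \bar V^1_a$, $f_7\in A_a^+$ of $f$, while the $\breve W_{a_-}$ and $\breve W_a$ components remain unconstrained since they already lie in $(V^0)^\perp$. Collecting everything,
$$W_a=\breve W_{a_-}\oplus \breve W_a\oplus \bigl[(A_a^-\oplus \bar V^1_a\oplus A_a^+)\cap (\bar V^0_a)^\perp\bigr]$$
as an orthogonal direct sum, and orthogonally subtracting $\breve W_{a_-}\oplus \breve W_a$ yields \eqref{barWa}.
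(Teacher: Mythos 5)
Your proposal is correct and follows essentially the same route as the paper: the paper's proof of \eqref{barWa} is the chain of equalities in which $W_a=V_a^1\cap(V_{a_-}^0\oplus\bar V_a^0\oplus V_{a_+}^0)^\perp$ is rewritten, via the decomposition $V^1_a=\breve V^1_{a_-}\oplus\bar V^1_a\oplus\breve V^1_a$ together with \eqref{breveV1a} at $a_-$ and $a$, as $(A_a^-\oplus\bar V_a^1\oplus A_a^+\oplus\breve W_{a_-}\oplus\breve W_a)\cap(\bar V_a^0)^\perp$, exactly the nine-piece elimination you carry out. Your write-up merely makes explicit the steps the paper compresses into ``Proceeding as in the previous proof.''
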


\begin{proof}
The formula \eqref{breveWa} is given in \eqref{breveWa1}.  Proceeding as in the previous proof, 
\begin{equation}\label{Wa}
\begin{split}
W_a&=V_a^1\cap \left( V_{a_-}^0\oplus \bar V_a^0\oplus V_{a_+}^0\right)^\perp
\\ &= \left(A_a^-\oplus \bar V_a^1 \oplus A_a^+\oplus\breve W_{a_-}\oplus \breve W_a\right)\cap \left(\bar V_a^0\right)^\perp\\
&= \left(\left(A_a^-\oplus \bar V_a^1 \oplus A_a^+\right)\cap \left( \bar V_a^0\right)^\perp\right)\oplus\breve W_{a_-}\oplus \breve W_a\\
&=\bar{W}_a\oplus\breve W_{a_-}\oplus\breve W_a.
\end{split}
\end{equation}
which establishes \eqref{barWa}.
\end{proof}

\subsection{Decomposing the wavelet spaces}\label{dws}
\newcommand{\dm}{\text{\rm dim }}

In order to aid in the construction of the wavelets, we discuss a parallel development based on the wavelet construction given in 
\cite{DGHorpoly}.  This construction decomposes $\bar W_a$ into two orthogonal subspaces that illuminate and simplify the construction of the bar wavelets.  We also give more explicit characterizations of the dimensions of the subspaces discussed above. 
 
Let $k^\epsilon_a=\dm V^\epsilon_a$, $\bar k^\epsilon_a=\dm\bar V^\epsilon_a$, $\breve k^\epsilon_a=\dm\breve V^\epsilon_a$, $Y_a=\bar V^0_a\cap\bar
V^1_a$, $m_a=\dm Y_a$,  $Y^-_a=\bar V^0_a\chi_{[a_-,a]}\cap\bar
V^1_a\chi_{[a_-,a]}$, $\dm  Y^-_a=m^-_a$, $Y^+_a=\bar V^0_a\chi_{[a,a_+]}\cap\bar
V^1_a\chi_{[a,a_+]}$, $\dm  Y^+_a=m^+_a$, $X_a^+ = (\chi_{[a,a_+]}\bar V_a^0) \ominus Y_a^+$, and $X_a^-:= (\chi_{[a_-,a]}\bar V_a^0) \ominus Y_a^-$. 

We begin with a sort of algorithm for constructing the ``short" wavelet space, $\breve W_a$, which will show the use of the spaces, $Y_a^+$ and $Y_a^-$. Recall that $A_a^- = P_{\breve V^1_{a_-}}\bar V^0_a$ and $A_a^+ = P_{\breve V^1_a}\bar V^0_a$.

\begin{lemma}\label{lemma10}

\begin{enumerate}

\item $\dim \breve W_a = (\breve k_a^1 -\breve k_a^0)- \dim X_a^+ - \dim X_{a_+}^-$.
\item $A_a^+= P_{\breve V^1_a} X_a^+$, and $A_{a_+}^-= P_{\breve V^1_a} X_{a_+}^-$.
\item $\dim A_a^+ = \dim X_a^+ = \bar k_a^0 - m_a^+$, and $\dim A_{a_+}^ - =  \dim X_{a_+}^- = \bar k_{a_+}^0 - m_{a_+}^-$.
\end{enumerate}

\end{lemma}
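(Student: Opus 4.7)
The plan is to prove the three assertions in the order (2), (3), (1): (3) will use the projection identity from (2), and (1) follows immediately from (3) by a dimension count. Throughout I will use the key preliminary observation that since every element of $\breve V_a^1$ is supported in $[a,a_+]$, truncation gives $P_{\breve V_a^1} f = P_{\breve V_a^1}(f\chi_{[a,a_+]})$ for every $f\in L^2(J)$; in particular,
\begin{equation*}
A_a^+ = P_{\breve V_a^1}(\chi_{[a,a_+]}\bar V_a^0), \qquad A_{a_+}^- = P_{\breve V_a^1}(\chi_{[a,a_+]}\bar V_{a_+}^0).
\end{equation*}

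For part (2), use the orthogonal decomposition $\chi_{[a,a_+]}\bar V_a^0 = X_a^+ \oplus Y_a^+$; it suffices to show $P_{\breve V_a^1} Y_a^+ = \{0\}$. Since $Y_a^+ \subset \chi_{[a,a_+]}\bar V_a^1$, and for any $g\in\bar V_a^1$ and $s\in\breve V_a^1$ one has $\ip{g\chi_{[a,a_+]},s} = \ip{g,s}=0$ (as $s$ is already supported in $[a,a_+]$ and $\bar V_a^1 \perp \breve V_a^1$), the claim follows. The identity for $A_{a_+}^-$ is proved identically using $\bar V_{a_+}^1 \perp \breve V_a^1$.

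For part (3), I first check that the truncation map $h\mapsto h\chi_{[a,a_+]}$ is injective on $\bar V_a^0$: if $h\chi_{[a,a_+]}=0$, then $h$ is supported in $[a_-,a]$, so $h\in V^0_{[a_-,a]} = \breve V^0_{a_-}$ by the span formula \eqref{spancond} (no knots of $\mathbf{a}$ lie strictly between $a_-$ and $a$); but $\bar V_a^0 \perp \breve V^0_{a_-}$ by construction, forcing $h=0$. Hence $\dim X_a^+ = \bar k_a^0 - m_a^+$. To show $P_{\breve V_a^1}$ is injective on $X_a^+$, suppose $f = h\chi_{[a,a_+]} \in X_a^+$ with $P_{\breve V_a^1} f = 0$ and decompose $h\in V_a^0 \subset V_a^1$ as $h=h_1+h_2+h_3$ along $V_a^1 = \breve V^1_{a_-}\oplus \bar V^1_a \oplus \breve V^1_a$. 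Since $h_1$ is supported in $[a_-,a]$ and $h_3$ in $[a,a_+]$, I get $f = h_2\chi_{[a,a_+]} + h_3$. Now $h_2\chi_{[a,a_+]}\perp\breve V_a^1$ by the argument used in (2), and $f\perp\breve V_a^1$ by assumption, so $h_3 \perp \breve V_a^1$; but $h_3\in\breve V_a^1$, hence $h_3=0$. Then $f = h_2\chi_{[a,a_+]}\in\chi_{[a,a_+]}\bar V_a^1\cap\chi_{[a,a_+]}\bar V_a^0 = Y_a^+$, while $f\in X_a^+\perp Y_a^+$, so $f=0$. The assertions for $X_{a_+}^-$ and $A_{a_+}^-$ follow by the symmetric argument using $V_{a_+}^1 = \breve V^1_a\oplus \bar V^1_{a_+}\oplus \breve V^1_{a_+}$.

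For part (1), take dimensions in the orthogonal decomposition
\[
\breve V_a^1 = \breve V_a^0 \oplus A_a^+ \oplus A_{a_+}^- \oplus \breve W_a
\]
from \eqref{breveV1a} (the pairwise orthogonality of the first three summands was established in Lemma~\ref{LemPerp}) and substitute $\dim A_a^+ = \dim X_a^+$ and $\dim A_{a_+}^- = \dim X_{a_+}^-$ from (3). The only real technical step is the injectivity of $P_{\breve V_a^1}$ on $X_a^+$ in (3); it is the one place where the interaction between the bar and breve components of $V_a^1$ is used, but the three-way orthogonal decomposition of $V_a^1$ combined with the support observation of (2) makes it straightforward.
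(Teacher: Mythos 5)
Your proof is correct and follows essentially the same route as the paper's: the truncation observation $P_{\breve V_a^1}f=P_{\breve V_a^1}(f\chi_{[a,a_+]})$, the identification of the kernel of $P_{\breve V_a^1}$ on $\chi_{[a,a_+]}\bar V_a^0$ with $Y_a^+$ via the decomposition $V_a^1=\breve V^1_{a_-}\oplus\bar V^1_a\oplus\breve V^1_a$, and the dimension count through $\breve V_a^1=\breve V_a^0\oplus A_a^+\oplus A_{a_+}^-\oplus\breve W_a$. The only difference is cosmetic: you spell out the kernel computation and the injectivity of truncation (which the paper leaves as ``easy to check'' and attributes to local linear independence), and you obtain part (1) by taking dimensions in \eqref{breveV1a} rather than by counting orthogonality conditions.
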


\begin{proof} From the definition of $W$ it follows that $\breve W_a \subset \breve V_a^1 \ominus \breve V_a^0$. But $\breve W_a$ is also orthogonal to $\chi_{[a,a_+]}\bar V_a^0 = X_a^+ \oplus Y_a^+$, and $\chi_{[a,a_+]}\bar V_{a_+}^0= X_{a_+}^- \oplus Y_{a_+}^-$. Since $Y_a^+$ and $Y_{a_+}^-$ are already orthogonal to $\breve V_a^1 \ominus \breve V_a^0$, then to construct a basis for $\breve W_a$ we only need to choose a basis from $\breve V_a^1 \ominus \breve V_a^0$ that is simultaneously orthogonal to $X_a^+$ and $X_{a_+}^-$. Further, since $\breve V_a^1 \ominus \breve V_a^0 \subset \breve V_a^1$, we need this basis orthogonal to spaces $P_{\breve V^1_a} X_a^+$ and $P_{\breve V^1_a} X_{a_+}^-$.

Using the fact that  $\bar f^0_a = P_{\breve V^1_{a_-}}\bar f^0_a + P_{\bar V^1_a}\bar f^0_a + P_{\breve V^1_a}\bar f^0_a$, for $\bar f^0_a \in \bar V^0_a$,  it is easy to check that the kernel of $P_{\breve V^1_a}$ acting on $\chi_{[a,a_+]}\bar V^0_a$ is $Y_a^+$. Thus $\dim P_{\breve V^1_a} X_a^+ = \dim X_a^+$. Similarly it can be shown that $\dim P_{\breve V^1_a} X_{a_+}^- = \dim X_{a_+}^-$. It is easy to check that the spaces $P_{\breve V^1_a} X_a^+$ and $P_{\breve V^1_a} X_{a_+}^-$ are subspaces of $\breve V_a^1 \ominus \breve V_a^0$ and are orthogonal to each other. The first equation of the statement of the lemma follows now since the number of additional orthogonality conditions is $\dim X_a^+ + \dim X_{a_+}^-$.

Observe that $A_a^+= P_{\breve V^1_a} \chi_{[a,a_+]}\bar V_a^0 = P_{\breve V^1_a}( X_a^+ \oplus Y_a^+) = P_{\breve V^1_a}X_a^+$. Similarly we can show that $A_{a_+}^-= P_{\breve V^1_a} X_{a_+}^-$. By local linear independence, $\dim \bar V^0_a \chi_{[a,a_+]} = \dim \bar V^0_a = \bar k^0_a$ and so $\dim A_a^+ = \bar k_a^0 - m_a^+$. The formula  for $\dm A_a^-$ follows in the same way.

\end{proof}

Using    Lemma~\ref{lemma10} combined with Corollary~\ref{WaveCor}, we now have

\begin{theorem}\label{waveletdim}
\begin{align*}
\dim \bar W_a &= \bar k^1_a+\bar k^0_a-m^+_a-m^-_a , \\
\dim \breve W_a &= \breve k^1_a - \breve k^0_a -\bar k^0_a-\bar k^0_{a_+}+m_a^+ + m_{a_+}^-,\\
\dim (\breve W_{a_-} \oplus \breve W_a) &= (m_{a_-}^+ - \bar k_{a_-}^0) +( k^1_a-k^0_a-\bar k^1_a -\bar k^0_a +m^-_a + m^+_a )+ (m_{a_+}^- -\bar k_{a_+}^0).
\end{align*}
\end{theorem}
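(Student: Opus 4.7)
The plan is to obtain all three dimension formulas by straightforward substitution, using Lemma~\ref{lemma10}, Corollary~\ref{WaveCor}, and the orthogonal decomposition $V_a^\epsilon=\breve V_{a_-}^\epsilon\oplus\bar V_a^\epsilon\oplus\breve V_a^\epsilon$ (which follows from the orthogonality of the bases, cf. the proof of Lemma~\ref{LemmaOrthCond}).

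For $\dim \breve W_a$, I would simply combine parts (1) and (3) of Lemma~\ref{lemma10}: substituting $\dim X_a^+=\bar k^0_a-m_a^+$ and $\dim X_{a_+}^-=\bar k^0_{a_+}-m_{a_+}^-$ into the formula $\dim\breve W_a=(\breve k^1_a-\breve k^0_a)-\dim X_a^+-\dim X_{a_+}^-$ immediately yields the second identity.

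For $\dim\bar W_a$, I would start from the characterization
\[
\bar W_a=\bigl(A_a^-\oplus\bar V_a^1\oplus A_a^+\bigr)\cap(\bar V_a^0)^\perp
\]
given in Corollary~\ref{WaveCor}. The three summands on the right are pairwise orthogonal: $A_a^-\perp A_a^+$ follows from the proof of Lemma~\ref{LemPerp} (applied at $a_-$ and $a$), while $\bar V_a^1$ is orthogonal to $A_a^\pm\subset\breve V_{a_-}^1\oplus\breve V_a^1$ by the orthogonality of the basis $\Phi^1$. Using Lemma~\ref{lemma10}(3), the dimension of this direct sum is $(\bar k^0_a-m_a^-)+\bar k^1_a+(\bar k^0_a-m_a^+)$. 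The key input is that $\bar V_a^0$ is a subspace of this direct sum, which is exactly equation~\eqref{wavePV1b}; consequently, intersecting with $(\bar V_a^0)^\perp$ removes a full $\bar k^0_a$ dimensions, leaving $\bar k^1_a+\bar k^0_a-m_a^--m_a^+$.

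Finally, for $\dim(\breve W_{a_-}\oplus\breve W_a)$, I would add the already-established formula for $\dim\breve W_a$ at the knot $a_-$ and at $a$. To express the answer in terms of the $k^\epsilon_a$'s rather than the $\breve k^\epsilon_{a_-}$'s, I would use the identity $\breve k^\epsilon_{a_-}=k^\epsilon_a-\bar k^\epsilon_a-\breve k^\epsilon_a$ coming from $V_a^\epsilon=\breve V_{a_-}^\epsilon\oplus\bar V_a^\epsilon\oplus\breve V_a^\epsilon$; after cancellation and regrouping, the terms naturally organize themselves into the three parenthesized summands in the statement. There is no real obstacle here beyond bookkeeping — the only substantive ingredients are the orthogonality of $A_a^-\oplus\bar V_a^1\oplus A_a^+$ and the containment $\bar V_a^0\subset A_a^-\oplus\bar V_a^1\oplus A_a^+$, both of which are already in place.
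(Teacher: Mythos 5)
Your proposal is correct and follows the same route as the paper, which presents Theorem~\ref{waveletdim} as an immediate consequence of Lemma~\ref{lemma10} and Corollary~\ref{WaveCor} without writing out the substitutions; your argument supplies exactly those details (substituting $\dim X_a^+=\bar k^0_a-m_a^+$ and $\dim X_{a_+}^-=\bar k^0_{a_+}-m_{a_+}^-$ into Lemma~\ref{lemma10}(1), subtracting $\dim\bar V^0_a=\bar k^0_a$ from the orthogonal sum $A_a^-\oplus\bar V^1_a\oplus A_a^+$ via \eqref{wavePV1b}, and adding the short-wavelet counts at $a_-$ and $a$). One tiny imprecision: the orthogonality $A_a^-\perp A_a^+$ is most directly a support statement ($A_a^-\subset\breve V^1_{a_-}$ and $A_a^+\subset\breve V^1_a$ have supports meeting only at the point $a$) rather than an instance of Lemma~\ref{LemPerp}, but this does not affect the argument.
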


We now introduce spaces that aid in the decomposition of $\bar W_a$, this follows the development given in \cite{DGHorpoly}. Let 

\begin{align*}
T_a&=(I-P_{\bar V^1_a})\bar V^0_a\\
T^-_a&=\{f\in T_a:{\rm supp}f\subset[a_-,a]\}\\
T^+_a&=\{f\in T_a:{\rm supp}f\subset[a,a_+]\}\\
U_a&=T_a\ominus (T^-_a\oplus T^+_a)\\
S_a&=(\chi_{[a,a_+]}-\chi_{[a_-,a]})U_a
\end{align*}

\begin{rem}
In a typical refinement the spaces $T^{\pm}_a$ are trivial. However in section~\ref{cpqtw}
we present an example of a refinement where these spaces are not empty.
\end{rem}

\begin{lemma}\label{suma}
$\displaystyle A_a^- \oplus A_a^+=S_a+T_a$.
\end{lemma}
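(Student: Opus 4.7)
The plan is to show set-theoretic equality by proving both containments, relying throughout on the fact that $\Phi^1$ is \textit{orthogonal}.

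For the forward inclusion $T_a + S_a \subset A_a^- \oplus A_a^+$, the main ingredient is the orthogonal decomposition $V_a^1 = \breve V_{a_-}^1 \oplus \bar V_a^1 \oplus \breve V_a^1$ supplied by Lemma~\ref{LemmaOrthCond}. For any $f \in \bar V_a^0 \subset V_a^1$, this yields
\[
(I - P_{\bar V_a^1}) f \;=\; P_{\breve V_{a_-}^1} f + P_{\breve V_a^1} f \;\in\; A_a^- + A_a^+,
\]
and the sum on the right is direct because $A_a^-$ and $A_a^+$ are supported in $[a_-, a]$ and $[a, a_+]$ respectively. Thus $T_a \subset A_a^- \oplus A_a^+$. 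For $S_a$, any $u \in U_a \subset T_a$ splits uniquely as $u = \ell + r$ with $\ell \in A_a^-$ and $r \in A_a^+$, and then $(\chi_{[a, a_+]} - \chi_{[a_-, a]}) u = r - \ell$ is again in $A_a^- \oplus A_a^+$.

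For the reverse inclusion I would exploit the reflection $\sigma := \chi_{[a, a_+]} - \chi_{[a_-, a]}$, which carries $U_a$ bijectively onto $S_a$. Given $u = \ell + r \in U_a$ as above, both $u \in T_a$ and $\sigma u = r - \ell \in S_a$ lie in $T_a + S_a$, so the half-sum and half-difference give $\ell, r \in T_a + S_a$. Consequently $\chi_{[a_-, a]} U_a \subset T_a + S_a$ and, by symmetry, $\chi_{[a, a_+]} U_a \subset T_a + S_a$. Combining with $T_a^\pm \subset T_a$ and the orthogonal decomposition $T_a = T_a^- \oplus T_a^+ \oplus U_a$, multiplication by $\chi_{[a_-, a]}$ yields
\[
\chi_{[a_-, a]} T_a \;=\; T_a^- + \chi_{[a_-, a]} U_a \;\subset\; T_a + S_a,
\]
since $\chi_{[a_-, a]}$ annihilates $T_a^+$ and fixes $T_a^-$.

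Finally I would identify $\chi_{[a_-, a]} T_a$ with $A_a^-$: applying $\chi_{[a_-, a]}$ to the decomposition $(I - P_{\bar V_a^1}) f = P_{\breve V_{a_-}^1} f + P_{\breve V_a^1} f$ annihilates the second summand and preserves the first, so $\chi_{[a_-, a]} T_a = \{P_{\breve V_{a_-}^1} f : f \in \bar V_a^0\} = A_a^-$. Hence $A_a^- \subset T_a + S_a$, and the analogous argument with $\chi_{[a, a_+]}$ gives $A_a^+ \subset T_a + S_a$. The only step that genuinely uses problem-specific structure is the orthogonal splitting $(I - P_{\bar V_a^1}) f = P_{\breve V_{a_-}^1} f + P_{\breve V_a^1} f$; without the orthogonality of $\Phi^1$ the space $V_a^1$ would not decompose so cleanly into three pieces. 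Once that identification is available, the remainder is linear algebra driven by the involution $\sigma$.
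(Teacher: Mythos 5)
Your proof is correct and rests on the same ingredients as the paper's: the orthogonal splitting $(I-P_{\bar V_a^1})f=P_{\breve V_{a_-}^1}f+P_{\breve V_a^1}f$ for $f\in\bar V_a^0$, the identification of multiplication by $\chi_{[a_-,a]}$ and $\chi_{[a,a_+]}$ with these projections, and the sum/difference trick together with the decomposition $T_a=T_a^-\oplus T_a^+\oplus U_a$. The paper packages this as a chain of equalities through the auxiliary space $\tilde S_a:=(P_{\breve V^1_a}-P_{\breve V^1_{a_-}})\bar V^0_a$ rather than as two separate containments, but the argument is essentially the same.
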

\begin{proof}
First we remark that 
$$
T_a=(P_{V^1_a}-P_{\bar V^1_a})\bar V^0_a=(P_{\breve V^1_{a}}+P_{\breve V^1_{a_-}})\bar V^0_a\subset  P_{\breve V^1_{a_-}}\bar V^0_a+P_{\breve V^1_a}\bar V^0_a= A_a^- \oplus A_a^+.
$$
Next, define 
$$
\tilde S_a:=(P_{\breve V^1_a}-P_{\breve V^1_{a_-}})\bar V^0_a. 
$$ 
It is clear that
$$A_a^-\oplus A_a^+=\tilde S_a+T_a.$$

Furthermore, 
$$ 
(P_{\breve V^1_a}-P_{\breve V^1_{a_-}})T_a=(P_{\breve V^1_a}-P_{\breve V^1_{a_-}})(P_{\breve V^1_{a}}+P_{\breve V^1_{a_-}})\bar V^0_a=(P_{\breve V^1_a}-P_{\breve V^1_{a_-}})\bar V^0_a=\tilde S_a $$
which shows
 $$\tilde S_a=(P_{\breve V^1_a}-P_{\breve V^1_{a_-}})T_a=(P_{\breve V^1_a}-P_{\breve V^1_{a_-}})(U_a\oplus T_a^-\oplus T_a^+)=S_a +T_a^- + T_a^+;$$ 
 where we used $S_a=(\chi_{[a,a_+]}-\chi_{[a_-,a]})U_a=(P_{\breve V^1_a}-P_{\breve V^1_{a_-}})U_a$ since $U_a\subset A_a^-\oplus A_a^+$. 
 Since $T_a^-$ and $T_a^+$ are contained in $T_a$, we have
 $\tilde S_a+T_a=S_a+T_a$ which completes the proof.   
\end{proof}

\begin{lemma}\label{ydim}
 $\dm T_a=\bar k^0_a-m_a$,  $\dm T^-_a=m^+_a-m_a$,  $\dm T^+_a=m^-_a-m_a$, $\dm U_a=\bar k^0+m_a-m^-_a-m^+_a=\dm S_a$. 
\end{lemma}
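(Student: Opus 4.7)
The plan is to analyze the surjection $\pi:\bar V^0_a\to T_a$ given by $\pi(g):=(I-P_{\bar V^1_a})g$, by splitting it through the orthogonal decomposition $V^1_a=\breve V^1_{a_-}\oplus \bar V^1_a\oplus \breve V^1_a$.  Since $\bar V^0_a\subset V^0_a\subset V^1_a$, for $g\in \bar V^0_a$ we have $\pi(g)=\pi_-(g)+\pi_+(g)$, where $\pi_-:=P_{\breve V^1_{a_-}}|_{\bar V^0_a}$ and $\pi_+:=P_{\breve V^1_a}|_{\bar V^0_a}$ take values in orthogonal subspaces whose images are the spaces $A_a^-$ and $A_a^+$ of Lemma~\ref{lemma10}.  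Their supports lie in $[a_-,a]$ and $[a,a_+]$ respectively, so each $f\in T_a\subset \breve V^1_{a_-}\oplus \breve V^1_a$ has a unique decomposition into a ``left piece'' and a ``right piece.''

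For $\dm T_a$, observe that $\ker\pi=\{g\in\bar V^0_a:g=P_{\bar V^1_a}g\}=\bar V^0_a\cap\bar V^1_a=Y_a$, yielding $\dm T_a=\bar k^0_a-m_a$.  For $\dm T^+_a$, a function $f\in T_a$ has support in $[a,a_+]$ iff its $\breve V^1_{a_-}$-component vanishes, i.e., iff any preimage $g\in\bar V^0_a$ satisfies $\pi_-(g)=0$; hence $T^+_a=\pi_+(\ker\pi_-)$.  By Lemma~\ref{lemma10}(3) (applied at $a$ rather than $a_+$), $\dm A_a^-=\bar k^0_a-m^-_a$, so $\dm \ker\pi_-=m^-_a$; the kernel of $\pi_+$ restricted to $\ker\pi_-$ is $\ker\pi_-\cap\ker\pi_+=\ker\pi=Y_a$.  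Consequently $\dm T^+_a=m^-_a-m_a$.  The symmetric argument, interchanging the roles of $\pi_-$ and $\pi_+$, gives $\dm T^-_a=m^+_a-m_a$.

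Since $T^-_a$ and $T^+_a$ have disjoint supports they are orthogonal, so from $U_a=T_a\ominus(T^-_a\oplus T^+_a)$ we get $\dm U_a=\dm T_a-\dm T^-_a-\dm T^+_a=\bar k^0_a+m_a-m^-_a-m^+_a$.  Finally, for $\dm S_a$: the map $\sigma:U_a\to S_a$, $f\mapsto (\chi_{[a,a_+]}-\chi_{[a_-,a]})f$, is surjective by the very definition of $S_a$, and injective since $\sigma(f)=f\chi_{[a,a_+]}-f\chi_{[a_-,a]}=0$ forces $f$ to vanish a.e. on $(a_-,a_+)$; hence $\dm S_a=\dm U_a$.

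The main point to be careful about is the index-swap $\dm T^+_a=m^-_a-m_a$: support of $f$ in $[a,a_+]$ corresponds to vanishing of the \emph{left} projection $\pi_-(g)$, so the overlap dimension enters through $\dm A_a^-=\bar k^0_a-m^-_a$ rather than through $m^+_a$.
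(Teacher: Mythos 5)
Your proof is correct and follows essentially the same route as the paper: both rest on splitting $I-P_{\bar V^1_a}$ acting on $\bar V^0_a$ into the projections onto $\breve V^1_{a_-}$ and $\breve V^1_a$ and identifying the relevant kernels with $Y_a$ and $Y^{\pm}_a$. The only cosmetic difference is that the paper obtains $\dim T^-_a=m^+_a-m_a$ by exhibiting a surjection $Y^+_a\to T^-_a$ with kernel $Y_a\chi_{[a,a_+]}$, whereas you get the same count by writing $T^+_a=\pi_+(\ker\pi_-)$ and invoking Lemma~\ref{lemma10}(3) together with rank--nullity.
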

\begin{rem}
We take this opportunity to point out that in \cite[page 1041]{DGHorpoly}  the dimension of $T_0$ should be $m_1-m$ and the dimension of $T_1$ should be $m_0-m$.  The analog of these spaces are $T^-_a$ and $T^+_a$.  
\end{rem}
\begin{proof} The linear transformation $I-P_{\bar V^1_a}$ maps $\bar V^0_a$ onto $T_a$. The kernel of this mapping is clearly $Y_a$, and thus the stated dimension of $T_a$ follows.

Once the dimensions of $T^+_a$ and $T^-_a$, as stated, are proved,  the stated dimension of $U_a$ follows easily from its definition. Since $P_{\bar V^1_a}U_a=0$,
\begin{equation}\label{cont}
U_a\subset \breve V^1_{a_-}\oplus\breve V^1_a,
\end{equation} 
which implies that
multiplying functions in $U_a$ by characterisitic functions above does not
destroy membership in $V^1_a$ so the dimension of $S_a$ follows. 

Let $y_a^+ \in Y_a^+$. Then there are functions $\bar f^0_a \in \bar V^0_a$ and $\bar f^1_a \in \bar V^1_a$ so that $y_a^+ = \bar f^0_a \chi_{[a,a_+]}=\bar f^1_a \chi_{[a,a_+]}$. Note that by local linear independence the functions $\bar f^0_a$ and $\bar f^1_a$ are uniquely determined.

It follows that 

$$
(I-P_{\bar V^1_a})\bar f^0_a = P_{\breve V^1_{a_-}}\bar f^0_a +P_{\breve V^1_a}\bar f^0_a = P_{\breve V^1_{a_-}}\bar f^0_a +P_{\breve V^1_a}\bar f^1_a = P_{\breve V^1_{a_-}}\bar f^0_a.
$$
As such, $(I-P_{\bar V^1_a})\bar f^0_a \in T_a^-$.

Conversely, let $t_a^- \in T_a^-$. Choose $\bar f^0_a \in \bar V^0_a$ so that $(I-P_{\bar V^1_a})\bar f^0_a = t_a^-$. Since $(I-P_{\bar V^1_a})\bar f^0_a = P_{\breve V^1_{a_-}}\bar f^0_a +P_{\breve V^1_a}\bar f^0_a$ we get that $P_{\breve V^1_a}\bar f^0_a=0$. By the same argument as given in the proof of Lemma \ref{lemma10} it follows that $\bar f^0_a \chi_{[a,a_+]}\in Y_a^+$.

Define the linear transformation $F: Y^+_a  \rightarrow T^-_a$ by $F(y_a^+)=(I-P_{\bar V^1_a})\bar f^0_a$. The above argument shows that $F$ is onto. It is clear that the kernel of $F$ is $Y_a \chi_{[a,a_+]}$. Since, by local linear independence,  $\dim \chi_{[a_-,a]}Y_a = \dim Y_a = m_a$, it follows that $\dim T_a^- = m_a^+-m_a$. 

The proof that $\dm T^+_a=m^-_a-m_a$ is entirely analogous.
\end{proof}

\begin{rem}
From Lemmas \ref{lemma10}, \ref{suma}, and \ref{ydim},  it follows that $\dim S_a + \dim T_a = \dim (S_a+T_a)$ and thus $S_a$ and $T_a$ are linearly independent.
\end{rem}

We now begin the decomposition of $\bar W_a$.

\begin{lemma}\label{stepone} 
Let $\hat W_a :=(I-P_{\bar V^0_a})\bar V^1_a$. 
Then $\dm \hat W_a=\bar k^1_a-m_a$ and $\hat W_a\subset \bar W_a$.
\end{lemma}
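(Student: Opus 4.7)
The plan is to dispatch the two claims separately: the dimension count by a rank–nullity argument on the linear map $I-P_{\bar V^0_a}$ restricted to $\bar V^1_a$, and the inclusion $\hat W_a\subset \bar W_a$ by checking the two defining properties of $\bar W_a$ given by Corollary~\ref{WaveCor}. Nothing here looks subtle; both parts should follow directly from the identities already established in the excerpt, in particular \eqref{wavePV1b} and the formula $\bar W_a=(A_a^-\oplus\bar V^1_a\oplus A_a^+)\cap(\bar V^0_a)^\perp$.

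For the dimension, I would view $\hat W_a$ as the image of the linear map $T\colon \bar V^1_a \to L^2(J)$ defined by $Tg=(I-P_{\bar V^0_a})g$. The kernel of $T$ consists of all $g\in\bar V^1_a$ with $P_{\bar V^0_a}g=g$, i.e., $g\in\bar V^0_a\cap\bar V^1_a=Y_a$. Hence $\ker T=Y_a$ has dimension $m_a$, and the rank–nullity theorem gives $\dm \hat W_a=\bar k^1_a-m_a$.

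For the inclusion, let $f=(I-P_{\bar V^0_a})g$ with $g\in\bar V^1_a$. I would verify the two conditions cutting out $\bar W_a$ in Corollary~\ref{WaveCor}. First, $f\perp \bar V^0_a$: for any $h\in\bar V^0_a$ we have $\langle f,h\rangle=\langle g,h\rangle-\langle P_{\bar V^0_a}g,h\rangle=0$ since $P_{\bar V^0_a}g-g\in (\bar V^0_a)^\perp$ on test vectors of $\bar V^0_a$. Second, $f\in A_a^-\oplus\bar V^1_a\oplus A_a^+$: the piece $g$ lies in $\bar V^1_a$, and by \eqref{wavePV1b} we have $\bar V^0_a\subset A_a^-\oplus\bar V^1_a\oplus A_a^+$, so $P_{\bar V^0_a}g$ also lies in this sum. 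Thus $f=g-P_{\bar V^0_a}g$ is in $A_a^-\oplus\bar V^1_a\oplus A_a^+$, and combining the two conditions with Corollary~\ref{WaveCor} yields $f\in \bar W_a$.

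The only minor obstacle I foresee is bookkeeping: making sure that the orthogonal decomposition $A_a^-\oplus\bar V^1_a\oplus A_a^+$ is invoked legitimately — it is an \emph{orthogonal} direct sum inside $V^1_a$ by Lemma~\ref{LemPerp} applied jointly with the support-based orthogonality of $A_a^-$ and $\bar V^1_a$ — but \eqref{wavePV1b} packages exactly what is needed. No additional computation beyond this citation is required.
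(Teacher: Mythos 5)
Your proposal is correct and follows essentially the same route as the paper: the paper's (very terse) proof likewise derives the inclusion from $\bar V^0_a\subset A_a^-\oplus\bar V^1_a\oplus A_a^+$ (i.e.\ \eqref{wavePV1b}) together with the characterization \eqref{barWa} of $\bar W_a$ in Corollary~\ref{WaveCor}, and leaves the dimension count to ``elementary Hilbert space arguments,'' which is exactly your rank--nullity argument with kernel $Y_a=\bar V^0_a\cap\bar V^1_a$. You have simply supplied the details the paper omits.
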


\begin{proof} Since $\bar V^0_a \subset A_a^- \oplus \bar V^1_a \oplus A_a^+$ it follows from Remark \ref{waveletConst} that $\hat W_a \subset \bar W_a$.  The dimension formula for $\hat W_a$ follows from
elementary Hilbert space arguments.
\end{proof}

\begin{lemma}\label{equalspace}
 $\bar V^0_a\oplus\hat W_a=T_a\oplus\bar V^1_a$.
\end{lemma}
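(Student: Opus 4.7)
The plan is to establish the identity by showing both sides are orthogonal direct sums of the same dimension and then verifying one inclusion. First I would note that both sums are in fact \emph{orthogonal} direct sums: by the definition of $\hat W_a=(I-P_{\bar V^0_a})\bar V^1_a$, every element of $\hat W_a$ is orthogonal to $\bar V^0_a$, and by the definition of $T_a=(I-P_{\bar V^1_a})\bar V^0_a$, every element of $T_a$ is orthogonal to $\bar V^1_a$. Hence $\bar V^0_a\oplus\hat W_a$ and $T_a\oplus\bar V^1_a$ are genuine orthogonal decompositions.

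Next I would match dimensions. By Lemma~\ref{stepone}, $\dim \hat W_a=\bar k^1_a-m_a$, and by Lemma~\ref{ydim}, $\dim T_a=\bar k^0_a-m_a$. Therefore
\begin{equation*}
\dim(\bar V^0_a\oplus\hat W_a)=\bar k^0_a+\bar k^1_a-m_a=\dim(T_a\oplus\bar V^1_a).
\end{equation*}

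With the dimensions in hand, it suffices to prove one inclusion, and I would establish $\bar V^0_a\oplus\hat W_a\subset T_a\oplus\bar V^1_a$. For $f\in\bar V^0_a$ the decomposition is immediate:
\begin{equation*}
f=(I-P_{\bar V^1_a})f+P_{\bar V^1_a}f\in T_a+\bar V^1_a.
\end{equation*}
For $h\in\hat W_a$, write $h=v-P_{\bar V^0_a}v$ with $v\in\bar V^1_a$, and then split the projection $P_{\bar V^0_a}v\in\bar V^0_a$ via its $\bar V^1_a$-projection:
\begin{equation*}
P_{\bar V^0_a}v=(I-P_{\bar V^1_a})(P_{\bar V^0_a}v)+P_{\bar V^1_a}(P_{\bar V^0_a}v),
\end{equation*}
where the first summand lies in $T_a$ (since $P_{\bar V^0_a}v\in\bar V^0_a$) and the second lies in $\bar V^1_a$. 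Rearranging,
\begin{equation*}
h=-(I-P_{\bar V^1_a})(P_{\bar V^0_a}v)+\bigl(v-P_{\bar V^1_a}P_{\bar V^0_a}v\bigr)\in T_a+\bar V^1_a.
\end{equation*}
This yields $\bar V^0_a\oplus\hat W_a\subset T_a\oplus\bar V^1_a$, and equality follows from the dimension count.

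I do not anticipate any real obstacle; the only subtle point is confirming that the $\oplus$ signs on both sides actually denote orthogonal direct sums, which is where I would be most careful. Once that is noted, the argument is a short chase through the definitions of the two projections.
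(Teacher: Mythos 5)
Your proof is correct and follows essentially the same strategy as the paper: equate dimensions of the two (orthogonal) direct sums using Lemmas~\ref{stepone} and~\ref{ydim}, then verify a single inclusion. The only difference is that you check $\bar V^0_a\oplus\hat W_a\subset T_a\oplus\bar V^1_a$ while the paper checks the (slightly quicker) reverse inclusion, noting that $\bar V^1_a\subset \bar V^0_a\oplus\hat W_a$ follows immediately from the definition of $\hat W_a$ and hence $T_a\subset\bar V^0_a+\bar V^1_a\subset\bar V^0_a\oplus\hat W_a$.
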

\begin{proof}
Since the dimension of the spaces on each side of the above equations are equal
we need only show that one space is contained in the other to prove the result. By definition of $\hat W_a$ it follows that $\bar V^1_a \subset \bar V^0_a\oplus\hat W_a$. This, in turn, implies that $T_a \subset \bar V^0_a\oplus\hat W_a$.
\end{proof}

\begin{lemma} \label{tildew}Let $\tilde W_a :=\bar W_a \ominus \hat W_a$. Then

\begin{enumerate}
\item 
\begin{align*}
\tilde W_a &= (A_a^- \oplus A_a^+) \cap (\bar V_a^0)^{\perp}\\
		&=(I-P_{\bar V_a^0 \oplus \hat W_a})(A_a^- \oplus A_a^+)\\
		&=(I-P_{\bar V_a^0 \oplus \hat W_a})S_a.
\end{align*}
\item $\dim \tilde W_a = \bar k^0_a+m_a-m^-_a-m^+_a$
\end{enumerate}
\end{lemma}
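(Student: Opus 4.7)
The plan is to derive the three characterizations in (1) sequentially, using Corollary~\ref{WaveCor} together with Lemmas~\ref{equalspace} and~\ref{suma}, and then obtain (2) by a short dimension count via Theorem~\ref{waveletdim} and Lemma~\ref{stepone}.  The key observation at the outset is that $\bar V_a^0$ sits inside the ambient space $U:=A_a^-\oplus \bar V_a^1\oplus A_a^+$: every $\bar f\in \bar V_a^0\subset V_a^0\subset V_a^1=\breve V_{a_-}^1\oplus \bar V_a^1\oplus \breve V_a^1$ decomposes into its three orthogonal projections, and the outer two lie in $A_a^\mp$ by the definition \eqref{Adef}.  Moreover $A_a^\pm\perp \bar V_a^1$, since $A_a^\pm\subset \breve V^1_{a_\mp}$ is a summand of $V_a^1$ orthogonal to $\bar V_a^1$ by Lemma~\ref{LemmaOrthCond} applied to $V^1$.

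Because $\bar V_a^0\subset U$, Corollary~\ref{WaveCor} reads $\bar W_a=U\ominus \bar V_a^0$, and Lemma~\ref{stepone} gives $\hat W_a\subset \bar W_a$ with $\hat W_a\perp \bar V_a^0$ by construction.  Hence $\tilde W_a=\bar W_a\ominus \hat W_a=U\ominus(\bar V_a^0\oplus \hat W_a)$.  Using Lemma~\ref{equalspace} to replace $\bar V_a^0\oplus \hat W_a$ by $T_a\oplus \bar V_a^1$ and peeling off the $\bar V_a^1$ factor (which is orthogonal to $A_a^-\oplus A_a^+$), this simplifies to $\tilde W_a=(A_a^-\oplus A_a^+)\ominus T_a$, the orthogonal complement of $T_a$ taken inside the finite-dimensional space $A_a^-\oplus A_a^+$.

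To finish (1a), the main point is to identify $T_a^\perp$ with $(\bar V_a^0)^\perp$ inside $A_a^-\oplus A_a^+$: for any $f\in A_a^-\oplus A_a^+$ and any $g\in \bar V_a^0$,
\[
\langle f,(I-P_{\bar V_a^1})g\rangle=\langle f,g\rangle-\langle f,P_{\bar V_a^1}g\rangle=\langle f,g\rangle,
\]
using $A_a^\pm\perp \bar V_a^1$, so $f\perp T_a$ if and only if $f\perp \bar V_a^0$.  This is the main conceptual step.  For (1b), observe that $\bar V_a^1\subset T_a\oplus \bar V_a^1=\bar V_a^0\oplus \hat W_a$, so $I-P_{\bar V_a^0\oplus \hat W_a}$ annihilates $\bar V_a^1$ and hence $(I-P_{\bar V_a^0\oplus \hat W_a})U=(I-P_{\bar V_a^0\oplus \hat W_a})(A_a^-\oplus A_a^+)$, which agrees with $\tilde W_a$ since $\bar V_a^0\oplus \hat W_a\subset U$.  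For (1c), apply Lemma~\ref{suma} to write $A_a^-\oplus A_a^+=S_a+T_a$, then use $T_a\subset \bar V_a^0\oplus \hat W_a$ to discard the $T_a$ summand under $I-P_{\bar V_a^0\oplus \hat W_a}$.

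Finally, (2) follows by subtraction: Theorem~\ref{waveletdim} gives $\dim \bar W_a=\bar k_a^1+\bar k_a^0-m_a^+-m_a^-$ and Lemma~\ref{stepone} gives $\dim \hat W_a=\bar k_a^1-m_a$, whose difference is $\bar k_a^0+m_a-m_a^+-m_a^-$.  The only genuine obstacle along the way is the identification of $T_a^\perp$ with $(\bar V_a^0)^\perp$ inside $A_a^-\oplus A_a^+$; once that is in hand, (1b), (1c), and (2) are formal consequences of the established lemmas.
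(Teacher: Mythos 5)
Your proof is correct, and it leans on exactly the same supporting results as the paper (Corollary~\ref{WaveCor}, Lemmas~\ref{stepone}, \ref{suma}, \ref{equalspace}, Theorem~\ref{waveletdim}), but it organizes the argument differently. The paper proves part (1) by a cycle of inclusions
\[
\tilde W_a \subset (A_a^-\oplus A_a^+)\cap(\bar V_a^0)^\perp \subset (I-P_{\bar V_a^0\oplus\hat W_a})(A_a^-\oplus A_a^+)\subset \tilde W_a,
\]
where the first inclusion is obtained by writing a general $\tilde w_a=f_{a_-,a}+\bar f_a^1+f_{a,a}$ in $A_a^-\oplus\bar V_a^1\oplus A_a^+$ and killing the middle component via the explicit computation $0=\langle\tilde w_a,(I-P_{\bar V_a^0})\bar f_a^1\rangle=\langle\bar f_a^1,\bar f_a^1\rangle$, and the last inclusion is an appeal to the characterization of $\bar W_a$; Lemma~\ref{equalspace} enters only at the very end, to convert $(I-P_{\bar V_a^0\oplus\hat W_a})(A_a^-\oplus A_a^+)$ into $(I-P_{\bar V_a^1\oplus T_a})S_a$. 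You instead put Lemma~\ref{equalspace} at the center: you identify $\tilde W_a$ once and for all as $U\ominus(\bar V_a^0\oplus\hat W_a)$ with $U=A_a^-\oplus\bar V_a^1\oplus A_a^+$, substitute $T_a\oplus\bar V_a^1$ for $\bar V_a^0\oplus\hat W_a$, peel off the $\bar V_a^1$ summand, and then convert $T_a^\perp$ into $(\bar V_a^0)^\perp$ inside $A_a^-\oplus A_a^+$ via the duality $\langle f,(I-P_{\bar V_a^1})g\rangle=\langle f,g\rangle$. This buys a cleaner logical flow---all three displayed equalities and the dimension count fall out of a single orthogonal-complement identity, and you avoid the paper's rather terse appeal to ``the characterization of $\bar W_a$'' for the inclusion $(I-P_{\bar V_a^0\oplus\hat W_a})(A_a^-\oplus A_a^+)\subset\tilde W_a$---at the cost of being less element-explicit. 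Part (2) is handled identically in both proofs. One small notational slip: you write $A_a^\pm\subset\breve V^1_{a_\mp}$, whereas by \eqref{Adef} one has $A_a^+\subset\breve V^1_a$ and $A_a^-\subset\breve V^1_{a_-}$; the conclusion $A_a^\pm\perp\bar V_a^1$ that you draw from it is correct either way, so this does not affect the argument.
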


\begin{proof} 

(1) Let $\tilde w_a \subset \tilde W_a$. Obviously, $\tilde w_a \in (\bar V_a^0)^{\perp}$. Choose $f_{a_-,a} \in A_a^-$, $\bar f_a^1 \in \bar V_a^1$, and $f_{a,a} \in A_a^+$ such that $\tilde w_a = f_{a_-,a} + \bar f_a^1 + f_{a,a}$. Now $0=\langle \tilde w_a, \bar f_a^1 - P_{\bar V_a^0}\bar f_a^1\rangle= \langle \tilde w_a, \bar f_a^1 \rangle = \langle \bar f_a^1, \bar f_a^1 \rangle$. Thus $\bar f_a^1=0$ and $\tilde w_a = f_{a_-,a}  + f_{a,a}$. We now have that $\tilde W_a \subset (A_a^- \oplus A_a^+) \cap (\bar V_a^0)^{\perp}$.

Next, choose $f_{a_-,a} \in A_a^-$ and $f_{a,a} \in A_a^+$ such that $f_{a_-,a}  + f_{a,a} \perp \bar V^0_a$. By the definition of $\hat W_a$, it is clear that $f_{a_-,a}  + f_{a,a} \perp \hat W_a$. Thus $(I-P_{\bar V_a^0 \oplus \hat W_a})(f_{a_-,a}  + f_{a,a}) =f_{a_-,a}  + f_{a,a}$. We then get that $(A_a^- \oplus A_a^+) \cap (\bar V_a^0)^{\perp} \subset (I-P_{\bar V_a^0 \oplus \hat W_a})(A_a^- \oplus A_a^+)$.

By the characterization of $\bar W_a$ in the wavelet theorem, it is clear that $(I-P_{\bar V_a^0 \oplus \hat W_a})(A_a^- \oplus A_a^+) \subset \tilde W_a$.

Now we have, by Lemmas  \ref{suma} and \ref{equalspace}, $\tilde W_a   = (I-P_{\bar V_a^0 \oplus \hat W_a})(A_a^- \oplus A_a^+) = (I-P_{\bar V_a^1 \oplus T_a})(S_a+T_a) = (I-P_{\bar V_a^1 \oplus T_a})S_a$.

(2) This follows directly from Theorem \ref{waveletdim} and Lemma \ref{stepone}.

\end{proof}

\begin{rem}
I follows from Lemmas \ref{ydim} and \ref{tildew} that $(\bar V_a^0 \oplus \hat W_a) \cap S_a=0$
\end{rem}

\subsection{Algorithm}\label{waveletConst}
The decomposition given in subsection \ref{dws} provides a
procedure for
constructing an orthonormal wavelet  basis $\Psi$ which we next summarize.
Note that the construction is local and if the scaling functions
are symmetric or antisymmetric on the interval then the wavelets can be constructed which
are symmetric or antisymmetric. 

Fix $a\in \mathbf{a}$ and let $k^\epsilon_a$, $\bar k^\epsilon_a$, $\breve
k^\epsilon_a$, $m_a$, and $m^{\pm}_a$ be as in the first paragraph of
subsection \ref{dws}. We begin with the construction of the
``long'' $\bar W_a$ wavelets. From Lemma~\ref{stepone} 
\begin{equation}\label{wavehat}
\hat W_a =(I-P_{\bar V^0_a})\bar V^1_a
\end{equation}
  has dimension $\bar k^1_a-m_a$, and so we may construct an orthonormal
basis of $\bar k^1_a-m_a$ wavelet functions for this space. If the scaling
functions are symmetric or antisymmetric with respect to the point $a$
then taking linear combinations of the symmetric functions or
antisymmetric functions in $\hat W_a$ allows us to construct wavelets
 that preserve the symmetry.  Next construct $T_a$, then eliminate
the functions in $T_a$ which are supported in $[a_-,a]$ or $[a,a_+]$ to
find $U_a$. Finally apply the difference in characteristic functions to obtain 
$S_a$ as given below Theorem~\ref{waveletdim}. Now by Gram-Schmidt
construct an orthonormal basis of $k^0_a+m_a-m^-_a-m^+_a$ functions for 
\begin{equation}\label{wavetilde}
\tilde W_a=(I-P_{\bar{V}_a^{0}\oplus\hat W_a})S_a
\end{equation}
given by equation~(1) of Lemma~\ref{tildew}. From the construction 
of $S_a$ and the
discussion above we see that if the scaling functions are symmetric or
antisymmetric with respect to $a$ then the wavelets can be constructed 
to preserve the
symmetry. The final step is to construct the wavelets in 
$\breve W_a$. From equation~(\ref{breveWa}) there are 
$\breve k^1_a-\breve k^0_a-\bar k^0_a-\bar
k^0_{a_+}+m^+_a+m^-_{a_+}$ orthonormal functions to be computed in 
\begin{equation}\label{waveshort}
\breve W_a=(I-P_{\breve{V} _a^{0}\oplus A_a^+\oplus A_{a_+}^-})\breve
V^1_a
\end{equation}
to complete the construction of the wavelets.

\subsection{Scaling and Wavelet Matrix Coefficients\label{MatCoeffs}}
In this section we  make use of matrices consisting of inner products. Let $f$ represent a column vector consisting of a finite collection of functions in $L^2(\R)$. If $g$ is another such column we let $\langle f,g \rangle$ denote the matrix so that the $ij$ term is given by $\langle f_i,g_j \rangle$.

Let $\Phi^0$ and $\Phi^1$ be ortho\emph{normal} bases centered on a knot sequence $\mathbf{a}$ as in Theorem~\ref{ThmWavelets}, with associated orthonormal wavelet basis $\Psi$.  For $a' \in \{a_-,a\}$ define
$$
c^{\bar{}~\bar{}}_{aa'}:= \langle  \bar \phi^0_a,  \bar \phi_{a'}^1 \rangle \quad \text{ and } \quad 
c^{\bar{}~\breve{}}_{aa'}:= \langle  \bar \phi^0_a,  \breve \phi_{a'}^1 \rangle.
$$
Similarly, for $a' \in \{a_-,a\}$, we define $c^{\breve{}~\bar{}}_{aa'}$ and $c^{\breve{}~\breve{}}_{aa'}$. (Note: Of course these matrices are only defined when the appropriate collections of functions are non-empty.)
It follows that 
\begin{equation}\label{ceq}
\left[
\begin{matrix}
\breve \Phi_{a_-}^0\\
\bar \Phi_a^0\\
\breve \Phi_a^0
\end{matrix}
\right]=
\left[
\begin{matrix}
c^{\breve{}~\breve{}}_{a_-a_-}&0&0\\
c^{\bar{}~\breve{}}_{aa_-}&c^{\bar{}~\bar{}}_{aa}&c^{\bar{}~\breve{}}_{aa}\\
0&0&c^{\breve{}~\breve{}}_{aa}
\end{matrix}
\right]
\left[
\begin{matrix}
\breve \Phi_{a_-}^1\\
\bar \Phi_a^1\\
\breve \Phi_a^1
\end{matrix}
\right]
\end{equation}
Note that in equation (\ref{ceq}) we can regard the matrix
$$
\left[
\begin{matrix}
c^{\breve{}~\breve{}}_{a_-a_-}&0&0\\
c^{\bar{}~\breve{}}_{aa_-}&c^{\bar{}~\bar{}}_{aa}&c^{\bar{}~\breve{}}_{aa}\\
0&0&c^{\breve{}~\breve{}}_{aa}
\end{matrix}
\right]
$$
as having rows (resp. columns) indexed by 
$$
\left[
\begin{matrix}
\breve \Phi_{a_-}^0\\
\bar \Phi_a^0\\
\breve \Phi_a^0
\end{matrix}
\right]
\left(
~\text{resp.}~
\left[
\begin{matrix}
\breve \Phi_{a_-}^1\\
\bar \Phi_a^1\\
\breve \Phi_a^1
\end{matrix}
\right]
\right).
$$
If one of the row (resp. column) indices is empty then the corresponding row (resp. column) is omitted. For example, if $\bar \Phi_a^0$ and $\bar \Phi_a^1$ are empty then equation (\ref{ceq}) becomes
$$
\left[
\begin{matrix}
\breve \Phi_{a_-}^0\\
\breve \Phi_a^0
\end{matrix}
\right]=
\left[
\begin{matrix}
c^{\breve{}~\breve{}}_{a_-a_-}&0\\
0&c^{\breve{}~\breve{}}_{aa}
\end{matrix}
\right]
\left[
\begin{matrix}
\breve \Phi_{a_-}^1\\
\breve \Phi_a^1
\end{matrix}
\right].
$$
We follow this convention in all the remaining matrix constructions.

For $a' \in \{a_-,a\}$ we can also define matrices $d^{\bar{}~\bar{}}_{aa'}$, $d^{\bar{}~\breve{}}_{aa'}$, $d^{\breve{}~\bar{}}_{aa'}$, and $d^{\breve{}~\breve{}}_{aa'}$ so that

\begin{equation}\label{deq}
\left[
\begin{matrix}
\breve \Psi_{a_-}\\
\bar \Psi_a\\
\breve \Psi_a
\end{matrix}
\right]=
\left[
\begin{matrix}
d^{\breve{}~\breve{}}_{a_-a_-}&0&0\\
d^{\bar{}~\breve{}}_{aa_-}&d^{\bar{}~\bar{}}_{aa}&d^{\bar{}~\breve{}}_{aa}\\
0&0&d^{\breve{}~\breve{}}_{aa}
\end{matrix}
\right]
\left[
\begin{matrix}
\breve \Phi_{a_-}^1\\
\bar \Phi_a^1\\
\breve \Phi_a^1
\end{matrix}
\right]
\end{equation}
For knot $a\in \mathbf{a}$ and $a'\in\{a_-, a\}$  define
$$
c_{aa'}:=
\left[
\begin{matrix} 
c_{aa'}^{\bar{}\ \bar{}} & c_{aa'}^{\bar{}\ \breve{}} \\
c_{aa'}^{\breve{}\ \bar{}} &c_{aa'}^{\breve{}\ \breve{}}
\end{matrix}
\right].
$$
In particular, we have
$$
c_{aa_-}=
\left[
\begin{matrix} 
0 & c_{aa_-}^{\bar{}\ \breve{}} \\
0 &0
\end{matrix}
\right]
~\text{and}~
c_{aa}=
\left[
\begin{matrix} 
c_{aa}^{\bar{}\ \bar{}} & c_{aa}^{\bar{}\ \breve{}} \\
0 &c_{aa}^{\breve{}\ \breve{}}
\end{matrix}
\right].
$$
For $a\in \mathbf{a}$,  equation (\ref{ceq})  implies the following ``scaling equation":
\begin{equation}
\Phi_a^0=\sum_{a'  \in \{a_-,a\}} c_{aa'}\Phi^1_{a'}.
\end{equation}
Similarly we can define
$$
d_{aa'}:=
\left[
\begin{matrix} 
d_{aa'}^{\bar{}\ \bar{}} & d_{aa'}^{\bar{}\ \breve{}} \\
d_{aa'}^{\breve{}\ \bar{}} &d_{aa'}^{\breve{}\ \breve{}}
\end{matrix}
\right].
$$
For $a\in \mathbf{a}$,  equation (\ref{deq})  implies 
\begin{equation}
\Psi_a=\sum_{a'  \in \{a_-,a\}} d_{aa'}\Phi^1_{a'}.
\end{equation}

We next describe the construction of $\Psi$ in terms of the matrix coefficients $d_{ aa'}$. 
When they are defined, the matrices $c_{ aa}^{\breve{}\ \breve{}}$ are full rank   with orthonormal rows as is the block matrix
$$\left[c_{ aa_-}^{\bar{} \ \breve{}}\  c_{ aa}^{\bar{} \ \bar{}}\ c_{ aa}^{\bar{} \ \breve{}}\right].$$  However, the  individual blocks may not be full rank (although this is the generic case).   For $a'=a,a_-$,  if $c_{aa'}^{\bar{}\ \breve{}} \ne 0$ let $b_{ aa'}^{\bar{} \ \breve{}}$ 
be the matrix with orthonormal rows whose row span is the same as $c_{ aa'}^{\bar{} \ \breve{}}$
and let $e_{ aa'}^{\bar{} \ \breve{}}$ be the matrix such that
$$
c_{ aa'}^{\bar{} \ \breve{}}=e_{ aa'}^{\bar{} \ \breve{}}b_{ aa'}^{\bar{} \ \breve{}}.
$$
We observe that 
$$
\alpha_a^+:= b_{ aa}^{\bar{} \ \breve{}}\breve \Phi_a^1
$$
is an orthonormal basis for $A_a^+$, and 
$$
\alpha_a^-:= b_{ aa_-}^{\bar{} \ \breve{}}\breve \Phi_{a_-}^1
$$
is an orthonormal basis for $A_a^-$. (Note: If $b_{ aa}^{\bar{} \ \breve{}}$ (resp. $b_{ aa_-}^{\bar{} \ \breve{}}$) is $0$ then $\alpha_a^+$ (resp. $\alpha_a^-$) is empty.)
From Corollary \ref{WaveCor} we have
$$
\breve{W}_a:=\breve{V} _{a}^{1}\ominus \left(\breve{V} _a^{0}\oplus A_a^+\oplus A_{a_+}^-\right).  $$
If $\breve{W}_a \ne \{0\}$ it follows that $d_{ aa}^{\breve{}\ \breve{}}$ may be chosen so that  
$$
\left[\begin{matrix} 
c_{ aa}^{\breve{}\ \breve{}}\\
b_{ aa}^{\bar{}\ \breve{}}\\
b_{a_+ a}^{\bar{}\ \breve{}}\\
d_{ aa}^{\breve{} \ \breve{}}
\end{matrix}\right]
$$ 
is a square orthogonal matrix which means that $\breve \Psi_a = d_{ aa}^{\breve{}\ \breve{}}\breve \Phi_a^1$.   If $\breve{W}_a = 0$, then $\breve \Psi_a$ is empty.
From equation (\ref{ceq}) we see that, when defined,
\begin{align*}
\bar \Phi_a^0& =
\left[
\begin{matrix}
c^{\bar{}~\breve{}}_{aa_-}&c^{\bar{}~\bar{}}_{aa}&c^{\bar{}~\breve{}}_{aa}
\end{matrix}
\right]
\left[
\begin{matrix}
\breve \Phi_{a_-}^1\\
\bar \Phi_a^1\\
\breve \Phi_a^1
\end{matrix}
\right]\\
	&=
\left[
\begin{matrix}
e^{\bar{}~\breve{}}_{aa_-}b^{\bar{}~\breve{}}_{aa_-}&c^{\bar{}~\bar{}}_{aa}&e^{\bar{}~\breve{}}_{aa}b^{\bar{}~\breve{}}_{aa}
\end{matrix}
\right]
\left[
\begin{matrix}
\breve \Phi_{a_-}^1\\
\bar \Phi_a^1\\
\breve \Phi_a^1
\end{matrix}
\right]\\
	&=
\left[
\begin{matrix}
e^{\bar{}~\breve{}}_{aa_-}&c^{\bar{}~\bar{}}_{aa}&e^{\bar{}~\breve{}}_{aa}
\end{matrix}
\right]
\left[
\begin{matrix}
\alpha_a^-\\
\bar \Phi_a^1\\
\alpha_a^+
\end{matrix}
\right].
\end{align*}
Since 
$$
\left[
\begin{matrix}
\alpha_a^-\\
\bar \Phi_a^1\\
\alpha_a^+
\end{matrix}
\right]
$$
is an orthonormal basis for $A_a^-\oplus \bar V_a^1 \oplus A_a^+$, the matrix
$$
\left[
\begin{matrix}
e^{\bar{}~\breve{}}_{aa_-}&c^{\bar{}~\bar{}}_{aa}&e^{\bar{}~\breve{}}_{aa}
\end{matrix}
\right]
$$
has orthonormal rows.

Again, from Corollary \ref{WaveCor} we have 
 $$  \bar{W}_a:=\left(A_a^-\oplus \bar V_a^1 \oplus A_a^+\right)\ominus \bar V_a^0. \\
$$ 
If $\bar{W}_a \ne \{0\}$ it follows that  the matrices $d_{ aa}^{\bar{}\ \bar{}}$ and $d_{ aa'}^{\bar{}\ \breve{}}$ may be found by completing the matrix 
$\left[e_{ aa_-}^{\bar{}\ \breve{}}  c_{aa}^{\bar{}\ \bar{}} e_{ aa}^{\bar{}\ \breve{}} \right]$ to an orthogonal square matrix, i.e., by determining matrices
$f_{ aa_-}^{\bar{}\ \breve{}}$,  $d_{aa}^{\bar{}\ \bar{}}$, and  $f_{ aa}^{\bar{}\ \breve{}}$ such that 
$$
\left[\begin{matrix} 
e_{ aa_-}^{\bar{}\ \breve{}}  c_{aa}^{\bar{}\ \bar{}} e_{ aa}^{\bar{}\ \breve{}}\\
f_{ aa_-}^{\bar{}\ \breve{}}  d_{aa}^{\bar{}\ \bar{}} f_{ aa}^{\bar{}\ \breve{}}\\
\end{matrix}\right]
$$ 
is an orthogonal matrix. Thus we have 

$$
\left[
\begin{matrix}
\bar \Phi_a^0\\
\bar \Psi_a
\end{matrix}
\right]=
\left[
\begin{matrix} 
e_{ aa_-}^{\bar{}\ \breve{}}  c_{aa}^{\bar{}\ \bar{}} e_{ aa}^{\bar{}\ \breve{}}\\
f_{ aa_-}^{\bar{}\ \breve{}}  d_{aa}^{\bar{}\ \bar{}} f_{ aa}^{\bar{}\ \breve{}}\\
\end{matrix}
\right]
\left[
\begin{matrix}
\alpha_a^-\\
\bar \Phi_a^1\\
\alpha_a^+
\end{matrix}
\right]=
\left[
\begin{matrix} 
c_{ aa_-}^{\bar{}\ \breve{}}  c_{aa}^{\bar{}\ \bar{}} c_{ aa}^{\bar{}\ \breve{}}\\
d_{ aa_-}^{\bar{}\ \breve{}}  d_{aa}^{\bar{}\ \bar{}} d_{ aa}^{\bar{}\ \breve{}}\\
\end{matrix}
\right]
\left[
\begin{matrix}
\bar \Phi_{a_-}^1\\
\bar \Phi_a^1\\
\bar \Phi_a^1
\end{matrix}
\right],
$$
where $d_{ aa'}^{\bar{}\ \breve{}}=f_{ aa'}^{\bar{}\ \breve{}}b_{ aa'}^{\bar{}\ \breve{}}$ for $a'=a_-,a$. If $\bar{W}_a = \{0\}$, then $\bar \Psi_a$ is empty.

For each knot $a$, define
$$
c_a:=
\left[
\begin{matrix}
c^{\bar{}~\breve{}}_{aa_-}&c^{\bar{}~\bar{}}_{aa}&c^{\bar{}~\breve{}}_{aa}\\
0&0&c^{\breve{}~\breve{}}_{aa}
\end{matrix}
\right] \quad \text{and} \quad d_a:=
\left[
\begin{matrix}
d^{\bar{}~\breve{}}_{aa_-}&d^{\bar{}~\bar{}}_{aa}&d^{\bar{}~\breve{}}_{aa}\\
0&0&d^{\breve{}~\breve{}}_{aa}
\end{matrix}
\right].
$$
By equations (\ref{ceq}) and (\ref{deq}) we have that
$$
\left[
\begin{matrix}
\bar \Phi_a^0\\
\breve \Phi_a^0
\end{matrix}
\right]=
c_a
\left[
\begin{matrix}
\breve \Phi_{a_-}^1\\
\bar \Phi_a^1\\
\breve \Phi_a^1
\end{matrix}
\right]
~\text{and}~
\left[
\begin{matrix}
\bar \Psi_a\\
\breve \Psi_a
\end{matrix}
\right]=
d_a
\left[
\begin{matrix}
\breve \Phi_{a_-}^1\\
\bar \Phi_a^1\\
\breve \Phi_a^1
\end{matrix}
\right].
$$
It follows from Corollary \ref{WaveCor} that 
$$
V_a^1 = A_{a_-}^+ \oplus V_a^0 \oplus W_a \oplus A_{a_+}^-.
$$
Since
$$
\left[
\begin{matrix}
\alpha_{a_-}^+\\
\breve \Phi_{a_-}^0\\
\bar \Phi_a^0\\
\breve \Phi_a^0\\
\breve \Psi_{a_-}\\
\bar \Psi_a\\
\breve \Psi_a\\
\alpha_{a_+}^-
\end{matrix}
\right]=
\left[
\begin{matrix}
b^{\bar{}~\breve{}}_{a_-a_-}&0&0\\
c^{\breve{}~\breve{}}_{a_-a_-}&0&0\\
c^{\bar{}~\breve{}}_{aa_-}&c^{\bar{}~\bar{}}_{aa}&c^{\bar{}~\breve{}}_{aa}\\
0&0&c^{\breve{}~\breve{}}_{aa}\\
d^{\breve{}~\breve{}}_{a_-a_-}&0&0\\
d^{\bar{}~\breve{}}_{aa_-}&d^{\bar{}~\bar{}}_{aa}&d^{\bar{}~\breve{}}_{aa}\\
0&0&d^{\breve{}~\breve{}}_{aa}\\
0&0&b^{\bar{}~\breve{}}_{a_+a}
\end{matrix}
\right]
\left[
\begin{matrix}
\breve \Phi_{a_-}^1\\
\bar \Phi_a^1\\
\breve \Phi_a^1
\end{matrix}
\right]
$$
it follows that the matrix
$$
\left[
\begin{matrix}
b^{\bar{}~\breve{}}_{a_-a_-}&0&0\\
c^{\breve{}~\breve{}}_{a_-a_-}&0&0\\
c^{\bar{}~\breve{}}_{aa_-}&c^{\bar{}~\bar{}}_{aa}&c^{\bar{}~\breve{}}_{aa}\\
0&0&c^{\breve{}~\breve{}}_{aa}\\
d^{\breve{}~\breve{}}_{a_-a_-}&0&0\\
d^{\bar{}~\breve{}}_{aa_-}&d^{\bar{}~\bar{}}_{aa}&d^{\bar{}~\breve{}}_{aa}\\
0&0&d^{\breve{}~\breve{}}_{aa}\\
0&0&b^{\bar{}~\breve{}}_{a_+a}
\end{matrix}
\right]=
\left[
\begin{matrix}
b^{\bar{}~\breve{}}_{a_-a_-}&0&0\\
c^{\breve{}~\breve{}}_{a_-a_-}&0&0\\
{}&c_a&{}\\
d^{\breve{}~\breve{}}_{a_-a_-}&0&0\\
{}&d_a&{}\\
0&0&b^{\bar{}~\breve{}}_{a_+a}
\end{matrix}
\right]
$$
is orthogonal.

We can generalize this orthogonal matrix as follows. Let $a<b$ be knots so that $[a,b]$ contains at least three knots. We define matrices $c_{[a,b]}$ inductively:
$$
c_{[a_-,a_+]}:= 
\left[
\begin{matrix}
c^{\breve{}~\breve{}}_{a_-a_-}&0&0\\
{}&c_a&{}
\end{matrix}
\right].
$$
Suppose $[a,b]$ contains more than three knots. Then 
$$
c_{[a,b]}:=
\left[
\begin{matrix}
c_{[a,b_-]}&0&0\\
{}&0 \dots c_{b_-}&{}
\end{matrix}
\right].
$$
Note that the above notation indicates that the second row of the block array consists of zeros followed by $c_{b_-}$ at the end. For example
$$
c_{[a_-,a_+]}=
\left[
\begin{matrix}
c^{\breve{}~\breve{}}_{a_-a_-}&0&0\\
c^{\bar{}~\breve{}}_{aa_-}&c^{\bar{}~\bar{}}_{aa}&c^{\bar{}~\breve{}}_{aa}\\
0&0&c^{\breve{}~\breve{}}_{aa}
\end{matrix}
\right].
$$
For four consecutive knots $a<b<c<d$,
$$
c_{[a,d]}=
\left[
\begin{matrix}
c^{\breve{}~\breve{}}_{aa}&0&0&0&0\\
c^{\bar{}~\breve{}}_{ba}&c^{\bar{}~\bar{}}_{bb}&c^{\bar{}~\breve{}}_{bb}&0&0\\
0&0&c^{\breve{}~\breve{}}_{bb}&0&0\\
0&0&c^{\bar{}~\breve{}}_{cb}&c^{\bar{}~\bar{}}_{cc}&c^{\bar{}~\breve{}}_{cc}\\
0&0&0&0&c^{\breve{}~\breve{}}_{cc}
\end{matrix}
\right].
$$

We similarly define matrices $d_{[a,b]}$ inductively by
$$
d_{[a_-,a_+]}:= 
\left[
\begin{matrix}
d^{\breve{}~\breve{}}_{a_-a_-}&0&0\\
{}&d_a&{}
\end{matrix}
\right], 
$$
and, if $[a,b]$ contains more than three knots, then 
$$
d_{[a,b]}:=
\left[
\begin{matrix}
d_{[a,b_-]}&0&0\\
{}&0 \dots d_{b_-}&{}
\end{matrix}
\right].
$$
Next, we use these matrices to define
$$
M_{[a,b]}:=
\left[
\begin{matrix}
b^{\bar{}~\breve{}}_{aa}~0\dots0\\
c_{[a,b]}\\
d_{[a,b]}\\
0\dots0b^{\bar{}~\breve{}}_{bb_-}
\end{matrix}
\right]
$$
It follows that
$$
\left[
\begin{matrix}
\alpha_a^+\\
\breve \Phi_a^0\\
\bar \Phi_{a_+}^0\\
\breve \Phi_{a_+}^0\\
\vdots\\
\breve \Phi_{b_-}^0\\
\breve \Psi_a\\
\bar \Psi_{a_+}\\
\breve \Psi_{a_+}\\
\vdots\\
\breve \Psi_{b_-}\\
\alpha_b^-
\end{matrix}
\right]=
M_{[a,b]}
\left[
\begin{matrix}
\breve \Phi_a^1\\
\bar \Phi_{a_+}^1\\
\breve \Phi_{a_+}^1\\
\vdots\\
\breve \Phi_{b_-}^1
\end{matrix}
\right],
$$
and thus $M_{[a,b]}$ is an orthogonal matrix.

We now use the results of Section \ref{dws} to decompose the matrix
$$
\left[
\begin{matrix}
d^{\bar{}~\breve{}}_{aa_-}&d^{\bar{}~\bar{}}_{aa}&d^{\bar{}~\breve{}}_{aa}
\end{matrix}
\right]
$$
when it is defined.
Using the fact that $\hat W_a =(I-P_{\bar V_a^0})\bar V_a^1$ (see Lemma \ref{stepone}), we obtain that
$$
\left[
\begin{matrix}
-(c^{\bar{}~\bar{}}_{aa})^T(c^{\bar{}~\breve{}}_{aa_-})&(I-(c^{\bar{}~\bar{}}_{aa})^Tc^{\bar{}~\bar{}}_{aa})&-(c^{\bar{}~\bar{}}_{aa})^T(c^{\bar{}~\breve{}}_{aa})
\end{matrix}
\right]
\left[
\begin{matrix}
\breve \Phi_{a_-}^1\\
\bar \Phi_a^1\\
\breve \Phi_a^1
\end{matrix}
\right]
$$
is a basis for $\hat W_a$. Let $\hat g_a$ be a matrix with orthonormal rows with the same row span as
$$
\left[
\begin{matrix}
-(c^{\bar{}~\bar{}}_{aa})^T(c^{\bar{}~\breve{}}_{aa_-})&(I-(c^{\bar{}~\bar{}}_{aa})^Tc^{\bar{}~\bar{}}_{aa})&-(c^{\bar{}~\bar{}}_{aa})^T(c^{\bar{}~\breve{}}_{aa})
\end{matrix}
\right].
$$
Write
$$
\hat g_a=
\left[
\begin{matrix}
\hat g^{\bar{}~\breve{}}_{aa_-}&\hat g^{\bar{}~\bar{}}_{aa}&\hat g^{\bar{}~\breve{}}_{aa}
\end{matrix}
\right].
$$
Then
$$
\left[
\begin{matrix}
\hat g^{\bar{}~\breve{}}_{aa_-}&\hat g^{\bar{}~\bar{}}_{aa}&\hat g^{\bar{}~\breve{}}_{aa}
\end{matrix}
\right]
\left[
\begin{matrix}
\breve \Phi_{a_-}^1\\
\bar \Phi_a^1\\
\breve \Phi_a^1
\end{matrix}
\right]
$$
is an orthonormal basis for $\hat W_a$.
Also, from Lemma \ref{tildew}, we have that $\tilde W_a =(I-P_{\bar V_a^0 \oplus \hat W_a})(A_a^- \oplus A_a^+)$. This implies that
$$
\left[
\begin{matrix}
b^{\bar{}~\breve{}}_{aa_-}&0&0\\
0&0&b^{\bar{}~\breve{}}_{aa}
\end{matrix}
\right]
\left(
I-
\left[
\begin{matrix}
c^{\bar{}~\breve{}}_{aa_-}&c^{\bar{}~\bar{}}_{aa}&c^{\bar{}~\breve{}}_{aa}\\
\hat g^{\bar{}~\breve{}}_{aa_-}&\hat g^{\bar{}~\bar{}}_{aa_-}&\hat g^{\bar{}~\breve{}}_{aa}
\end{matrix}
\right]^T
\left[
\begin{matrix}
c^{\bar{}~\breve{}}_{aa_-}&c^{\bar{}~\bar{}}_{aa}&c^{\bar{}~\breve{}}_{aa}\\
\hat g^{\bar{}~\breve{}}_{aa_-}&\hat g^{\bar{}~\bar{}}_{aa_-}&\hat g^{\bar{}~\breve{}}_{aa}
\end{matrix}
\right]
\right)
\left[
\begin{matrix}
\breve \Phi_{a_-}^1\\
\bar \Phi_a^1\\
\breve \Phi_a^1
\end{matrix}
\right]
$$
is a basis for $\tilde W_a$. Let $\tilde g_a$ be a matrix with orthonormal rows and with the same row space as 
$$
\left[
\begin{matrix}
b^{\bar{}~\breve{}}_{aa_-}&0&0\\
0&0&b^{\bar{}~\breve{}}_{aa}
\end{matrix}
\right]
\left(
I-
\left[
\begin{matrix}
c^{\bar{}~\breve{}}_{aa_-}&c^{\bar{}~\bar{}}_{aa}&c^{\bar{}~\breve{}}_{aa}\\
\hat g^{\bar{}~\breve{}}_{aa_-}&\hat g^{\bar{}~\bar{}}_{aa_-}&\hat g^{\bar{}~\breve{}}_{aa}
\end{matrix}
\right]^T
\left[
\begin{matrix}
c^{\bar{}~\breve{}}_{aa_-}&c^{\bar{}~\bar{}}_{aa}&c^{\bar{}~\breve{}}_{aa}\\
\hat g^{\bar{}~\breve{}}_{aa_-}&\hat g^{\bar{}~\bar{}}_{aa_-}&\hat g^{\bar{}~\breve{}}_{aa}
\end{matrix}
\right]
\right)
$$
Write
$$
\tilde g_a=
\left[
\begin{matrix}
\tilde g^{\bar{}~\breve{}}_{aa_-}&\tilde g^{\bar{}~\bar{}}_{aa}&\tilde g^{\bar{}~\breve{}}_{aa}
\end{matrix}
\right].
$$
Then
$$
\left[
\begin{matrix}
\tilde g^{\bar{}~\breve{}}_{aa_-}&\tilde g^{\bar{}~\bar{}}_{aa}&\tilde g^{\bar{}~\breve{}}_{aa}
\end{matrix}
\right]
\left[
\begin{matrix}
\breve \Phi_{a_-}^1\\
\bar \Phi_a^1\\
\breve \Phi_a^1
\end{matrix}
\right]
$$
is an orthonormal basis for $\tilde W_a$.
Thus we have that
$$
\left[
\begin{matrix}
d^{\bar{}~\breve{}}_{aa_-}&d^{\bar{}~\bar{}}_{aa}&d^{\bar{}~\breve{}}_{aa}
\end{matrix}
\right]=
\left[
\begin{matrix}
\hat g^{\bar{}~\breve{}}_{aa_-}&\hat g^{\bar{}~\bar{}}_{aa}&\hat g^{\bar{}~\breve{}}_{aa}\\
\tilde g^{\bar{}~\breve{}}_{aa_-}&\tilde g^{\bar{}~\bar{}}_{aa}&\tilde g^{\bar{}~\breve{}}_{aa}
\end{matrix}
\right].
$$

\section{Efficient construction and applications}\label{efficient_constr}
\subsection{Efficient construction of scaling and wavelet matrix coefficients for  the example of Section~\ref{ex1}}

\subsubsection{Knot sequences on a closed interval with corresponding orthogonal bases.}\label{knot_seq}

Let $\mu < \nu $ be real numbers. Let $\mathbf{a}$ be a knot sequence on $[\mu,\nu]$ that includes $\mu$ and $\nu$. (Before continuing we recall that if $\mathbf{a}$ is a knot sequence and $a \in  \mathbf{a}$, then $(\mathbf{a},a_-)$ (resp. $(\mathbf{a},a_+)$) denotes the predecessor (resp. successor) of $a$ relative to the knot sequence $\mathbf{a}$.  For each knot $a \in \mathbf{a}$ with $\mu \le a <\nu$, we assume there is a corresponding $\theta_{(\mathbf{a},a)} \in (0,1)$;  $\theta_{(\mathbf{a},a)} \in (0,1)$ depends on the knot sequence and the knot. As in Section~\ref{ex1}, if we let $\boldsymbol{\theta}$ be the sequence $(\theta_{(\mathbf{a},a)})_{\mu \le a <\nu}$, then we can define the orthogonal basis $\Omega_{\mathbf{a},\boldsymbol{\theta}}$.  The component functions of $\bar\Omega_{(\mathbf{a},a)}$ (resp. $\breve\Omega_{(\mathbf{a},a)}$), for suitably chosen knots $a$, can be normalized to produce $\bar\Phi_{(\mathbf{a},a)}$ (resp. $\breve\Phi_{(\mathbf{a},a)}$).

\subsubsection{Knot sequence refinements obtained by adding a single knot.}\label{insert_one_knot}

Let $\mu < \nu $ be real numbers. Suppose given  a knot sequence $\mathbf{a}^0$  on $[\mu,\nu]$ and a corresponding sequence $\boldsymbol{\theta}^0 = (\theta_{(\mathbf{a}^0,a)})_{a \in \mathbf{a}^0, a < \nu}$   as described in Section \ref{knot_seq}. We will now consider a simple refinement of $\mathbf{a}^0$ obtained by adding a single new knot. 

Fix $a \in \mathbf{a}^0$ so that $a < \nu$. Let $\mathbf{a}^1:= \mathbf{a}^0 \cup \{ b^0_a\}$, where $b^0_a$ is as defined in Section \ref{ex1}. Choose $\theta', \theta'' \in (0,1)$. We define a new sequence $\boldsymbol{\theta}^1 = (\theta_{(\mathbf{a}^1,a)})_{a \in \mathbf{a}^1, a < \nu}$ as follows: if $a' \in \mathbf{a}^0$, $a'< \nu$, and either $a' < a$ or $a' \ge (\mathbf{a}^0,a_+)$, then, by definition, $a' \in \mathbf{a}^1$ and we let $\theta_{(\mathbf{a}^1,a')}:=\theta_{(\mathbf{a}^0,a')}$; $\theta_{(\mathbf{a}^1,a)}:= \theta'$, and $\theta_{(\mathbf{a}^1,a_+)}:= \theta''$. It follows from Lemma \ref{c0quadlemma} that
$$S(\Omega_{\mathbf{a^{0}},\boldsymbol{\theta
^{0}}})\subset S(\Omega_{\mathbf{a^{1}},\boldsymbol{\theta^{1}}}).$$

As in Section \ref{knot_seq}, for $\epsilon \in \{0,1\}$ let $\Phi_{(\mathbf{a}^{\epsilon},\cdot)}$ be the orthonormal basis for $S(\Omega_{\mathbf{a^{\epsilon}},\boldsymbol{\theta
^{\epsilon}}})$.  We remark that  $\Phi^0:=\Phi_{(\mathbf{a}^{0},\cdot)}$ and $\Phi^1:=\Phi_{(\mathbf{a}^{1},\cdot)}$ are both orthonormal bases centered on $\mathbf{a}^0$ and  are referenced according to $\mathbf{a}^0$.  
If $ (\mathbf{a}^0,a_{+})<\nu$, then $\Phi_{(\mathbf{a}^{1},\cdot)}$ is obtained from $\Phi_{(\mathbf{a}^{0},\cdot)}$ by replacing the four functions represented by  $\Phi_{(\mathbf{a}^0,a)}$ and $\bar \Phi_{(\mathbf{a}^0,a_+)}$ by the seven functions represented by $\Phi_{(\mathbf{a}^1,a)}$, $\Phi_{(\mathbf{a}^1,a_+)}$, and $\bar\Phi_{(\mathbf{a}^1,a_{++})}$. If $(\mathbf{a}^0,a_+)=\nu$ then $\Phi_{(\mathbf{a}^{1},\cdot)}$ is obtained from $\Phi_{(\mathbf{a}^{0},\cdot)}$ by replacing the four functions in $\Phi_{(\mathbf{a}^0,a)}$ by the seven functions represented by $\Phi_{(\mathbf{a}^1,a)}$ and $\Phi_{(\mathbf{a}^1,a_+)}$. 
It follows from Theorem \ref{ThmWavelets} that there exists an orthonormal basis $\Psi$ centered on $\mathbf{a}^0$ so that $S(\Phi_{(\mathbf{a}^{1},\cdot)})= S(\Phi_{(\mathbf{a}^{0},\cdot)}) \oplus S(\Psi)$. As in that theorem, we let $W=S(\Psi)$.  From the preceding paragraph we get that $\dim W =3$.  It is easy to check that all the spaces $W_{a'}$ are trivial except when $a'=a$ or $(\mathbf{a}^0,a_+)$. 

We can now find the scaling matrix coefficients described in Section \ref{MatCoeffs}. As in that section, the wavelet matrix coefficients can then be easily constructed. 
We briefly describe the case where $\mu < a< (\mathbf{a}^0,a_{+})<(\mathbf{a}^0,a_{++})<\nu$, the other cases being similar. The scaling matrix coefficients involved in computing the wavelet matrix coefficients are $c_{aa},c_{(\mathbf{a}^0,a_{+})a}$, and $c_{(\mathbf{a}^0,a_{+})(\mathbf{a}^0,a_{+})}$. $c_{aa}$ and $c_{(\mathbf{a}^0,a_{+})a}$ are $3 \times 6$ matrices, and $c_{(\mathbf{a}^0,a_{+})(\mathbf{a}^0,a_{+})}$ is a $3 \times 3$ diagonal matrix with the lower right $2 \times 2$ block being the identity matrix.

Using the dimension notation of Section \ref{dws}, it is easy to check that $\bar k^0_a=\bar k^0_{a_+}=\bar k^1_a=\bar k^1_{a_+}=1$, $m^+_a=m^-_{a_+}=0$, $m^-_a=m^+_{a_+}=1$, $\breve k^0_a=2$, and $\breve k^1_a=5$.  From Theorem~\ref{waveletdim}  we obtain  $\dim \bar W_a=\dim \bar W_{a_+}=\dim \breve W_a=1$. Using the methods of Section \ref{MatCoeffs} we get the $1 \times 1$ matrices $d^{~\bar{}~\bar{}}_{aa}$ and $d^{~\bar{}~\bar{}}_{a_+a_+}$, the $1 \times 5$ matrices $d^{~\bar{}~\breve{}}_{aa}$ and $d^{~\bar{}~\breve{}}_{a_+a}$, and the $1 \times 5$ matrix $d^{~\breve{}~\breve{}}_{aa}$.

\subsection{Greedy Algorithm}

\subsubsection{Dropping one knot}\label{dropknot}

Assume $\mathbf{a}^1$ is a knot sequence on $[\mu,\nu]$ as discussed in section \ref{insert_one_knot}. Let $b \in \mathbf{a}^1$ so that $\mu < b < \nu$; i.e. $b$ is an interior knot of $\mathbf{a}^1$. Choose $l,m \in \mathbf{a}^1$ so that $l,b$ and $m$ are consecutive knots in $\mathbf{a}^1$.
Next we let $\mathbf{a}^0 = \mathbf{a}^1 \setminus \{ b\}$.  If $a \in \mathbf{a}^1$ and either $a < l$ or $a \ge m$, then clearly $a\in \mathbf{a}^0$, and in these cases we define $\theta_{(\mathbf{a}^0,a)}:=\theta_{(\mathbf{a}^1,a)}$. Also, we define
$$
\theta_{(\mathbf{a}^0,l )}:= \frac{b-l}{m-l}.
$$

Let $f \in L^2([\mu,\nu])$. For $i \in \{0,1 \}$, let $c^i_a := \langle \Phi_{(\mathbf{a}^i,a)},f\rangle$, $\bar c^i_a := \langle \bar\Phi_{(\mathbf{a}^i,a)},f\rangle$, and $\breve c^i_a := \langle \breve\Phi_{(\mathbf{a}^i,a)},f\rangle$. Since $\mathbf{a}^1$ is a refinement of $\mathbf{a}^0$, then we may also use the wavelet basis $(\Psi_a)_{a \in \mathbf{a}^0, a < \nu}$ to define $d_a := \langle \Psi_{a},f\rangle$, $\bar d_a := \langle \bar\Psi_{a},f\rangle$, and $\breve d_a := \langle \breve\Psi_{a},f\rangle$.
If $a<l$ or $m<a<\nu$, then $c^0_a=c^1_a$. Also, if $m<\nu$ we have $\breve c^0_m=\breve c^1_m$. As we shall see below, the  coefficient sequence $(c^0_a)_{a\in \mathbf{a}^0, a < \nu}$ is obtained from $(c^1_a)_{a\in \mathbf{a}^1, a < \nu}$ by a fairly simple replacement procedure.

We briefly describe the case where  $\mu < l < b< m < \nu$, the other cases being similar.

$$
c^0_l = 
\left[
\begin{matrix}
c_{ll}^{~\bar{}~ \bar{}}&c_{ll}^{~\bar{}~ \breve{}}\\
0&c_{ll}^{~\breve{}~ \breve{}}
\end{matrix}
\right]
\left[
\begin{matrix}
c^1_l\\
c^1_b
\end{matrix}
\right],
\bar c^0_m = c_{ml}^{~\bar{}~ \breve{}}
\left[
\begin{matrix}
\breve c^1_l\\
c^1_b
\end{matrix}
\right]
+ c_{mm}^{~\bar{}~ \bar{}} \bar c^1_m,
$$
and

$$
d_l = 
\left[
\begin{matrix}
d_{ll}^{~\bar{}~ \bar{}}&d_{ll}^{~\bar{}~ \breve{}}\\
0&d_{ll}^{~\breve{}~ \breve{}}
\end{matrix}
\right]
\left[
\begin{matrix}
c^1_l\\
c^1_b
\end{matrix}
\right],
\bar d_m = d_{ml}^{~\bar{}~ \breve{}}
\left[
\begin{matrix}
c^1_l\\
c^1_b
\end{matrix}
\right]
+ d_{mm}^{~\bar{}~ \bar{}} \bar c^1_m.
$$

Here,  
$
\left[
\begin{matrix}
c_{ll}^{~\bar{}~ \bar{}}&c_{ll}^{~\bar{}~ \breve{}}\\
0&c_{ll}^{~\breve{}~ \breve{}}
\end{matrix}
\right]
$
is a $3 \times 6$ matrix,  $c_{ml}^{~\bar{}~ \breve{}}$ is a $1 \times 5$ matrix,  $c_{mm}^{~\bar{}~ \bar{}}$ is a $1 \times 1$ matrix,  
$
\left[
\begin{matrix}
d_{ll}^{~\bar{}~ \bar{}}&d_{ll}^{~\bar{}~ \breve{}}\\
0&d_{ll}^{~\breve{}~ \breve{}}
\end{matrix}
\right]
$
is a $2 \times 6$ matrix, $d_{ml}^{~\bar{}~ \breve{}}$ is a $1 \times 5$ matrix, and $d_{mm}^{~\bar{}~ \bar{}}$ is a $1 \times 1$ matrix.
Note that in each case seven scalar scaling coefficients at level 1 are ``replaced'' by four scalar scaling coefficients at level 0 and three scalar wavelet coefficients.

\subsubsection{Greedy algorithm.}

Assume $\mathbf{a}$ is a knot sequence on $[\mu,\nu]$ as discussed in the previous subsection, and $f \in L^2([\mu,\nu])$. For each interior knot of $\mathbf{a}$ we drop that knot, as in Section \ref{dropknot}, and compute the triple of wavelet coefficients that result. We choose the interior knot that minimizes the $l^2$ norm of the triple. Then we remove that knot and repeat the procedure on the resulting new knot sequence. This is done until all original interior knots have been dropped. Below are the details of this algorithm. 

Let $b$ be an interior knot of $\mathbf{a}$ . $\text{Drop}(\mathbf{a},b)$ will denote the knot sequence obtained from $\mathbf{a}$ by dropping $b$, as described in Section \ref{dropknot}. Let $f \in L^2([\mu,\nu])$, and $a \in \mathbf{a}$ with $a<\nu$. $\text{Coeff}(\mathbf{a},f,a) := \langle \Phi_{(\mathbf{a},a)},f\rangle$. $\text{Coeff}(\mathbf{a},f)$ will denote the sequence $\big(\text{Coeff}(\mathbf{a},f,a)\big)_{a<\nu}$.
Next let $c=\text{Coeff}(\mathbf{a},f)$ for some $f$ and let $b$ be an interior knot of $\mathbf{a}$. If we temporarily let $\mathbf{a}^1:= \mathbf{a}$ and $\mathbf{a}^0:= \text{Drop}(\mathbf{a},b)$, as in Section \ref{dropknot}, then let $\text{Wave}(\mathbf{a},c,b)$ be the triple of wavelet coefficients that results from dropping knot $b$. $\text{Wave}(\mathbf{a},c)$ will denote the sequence $\big(\text{Wave}(\mathbf{a},c,b)\big)_{\mu<b<\nu}$.
If $(d_b)_{\mu<b<\nu}$ is a sequence of triples indexed by the interior knots of $\mathbf{a}$, $\text{MinKnot}(\mathbf{a},d)$ will denote the interior knot with the smallest $l^2$ norm of its triple.

The algorithm listed below will produce a sequence of knot sequences by successively dropping knots. Suppose $\mathbf{a}$ has $N$ interior knots.

\begin{enumerate}
\item $\mathbf{a}^N=\mathbf{a}$.
\item For $i=N,N-1,\dots,1$,
\begin{enumerate}
\item $c^i=\text{Coeff}(\mathbf{a}^i,f)$;
\item $d^i=\text{Wave}(\mathbf{a^i},c^i)$;
\item $b^i= \text{MinKnot}(\mathbf{a}^i,d^i)$;
\item $\mathbf{a}^{i-1}=\text{Drop}(\mathbf{a^i},b^i)$.
\end{enumerate}

\end{enumerate}

\subsubsection{Data Example}

We now apply the greedy algorithm to an example data set. We consider data from one row of an $200\times 200$ gray scale image (Example/ocelot.jpg) available from the \emph{Mathematica} TestImage image library.   
We extract from this image the  first 199 entries of the $120^{\text{th}}$ row. Figure \ref{Image_data} shows a linearly interpolated plot of this data. 

\begin{figure}[h]
\begin{center}
\scalebox{0.6}{\includegraphics{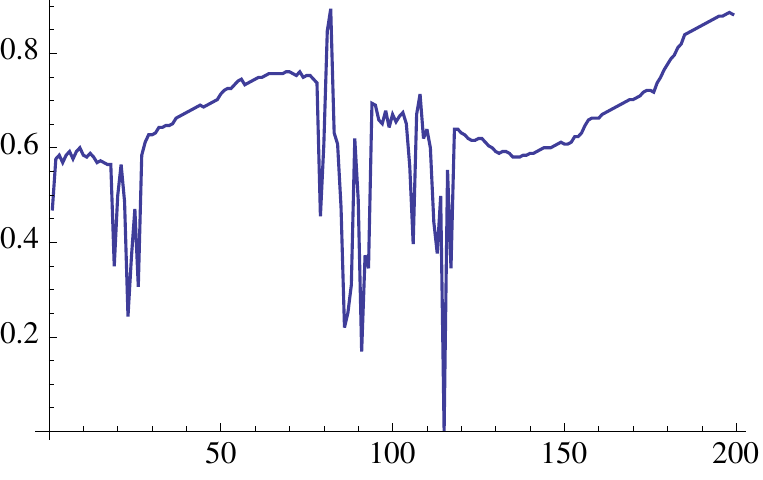}}
\caption{Linearly interpolated samples from a row of the $200\times 200$ gray scale image (Example/ocelot.jpg) in the \emph{Mathematica} TestImage library.} \label{Image_data}
\end{center}
\end{figure}

We next consider the knot sequence on $[1,199]$ given by $\mathbf{a}:=(1,4,7,\dots,196,199)$. For each knot $a \in \mathbf{a}$ with $1 \le a <199$, let $\theta_{(\mathbf{a},a)}= 1/2$. Note that for $1 \le a < 196$, $\Phi_{(\mathbf{a},a)}$ consists of three orthogonal functions, and $\Phi_{(\mathbf{a},196)}$ consists of four orthogonal functions. It follows that $\dim S(\Phi_{(\mathbf{a},\cdot)})=199$. It is easy to check that there is a unique function $f \in S(\Phi_{(\mathbf{a},\cdot)})$ that interpolates the data, i.e. $f(i)=d_i$ for $1 \le i \le 199$.

We now show the results of applying the greedy algorithm to the function $f$. Note that $\mathbf{a}$ has 65 interior knots. Thus in step $(1)$, we let $N=65$ and $\mathbf{a}^{65} := \mathbf{a}$. For $1 \le i \le 65$,  
$$
\Vert f - P_{S(\Phi_{(\mathbf{a^i},\cdot)})}f \Vert^2
$$
represents the square of the error obtained by approximating $f$ with $P_{S(\Phi_{(\mathbf{a^i},\cdot)})}f$. This function of $i$ is plotted in Figure \ref{Error_data}.

\begin{figure}[h]
\begin{center}
\scalebox{0.4}{\includegraphics{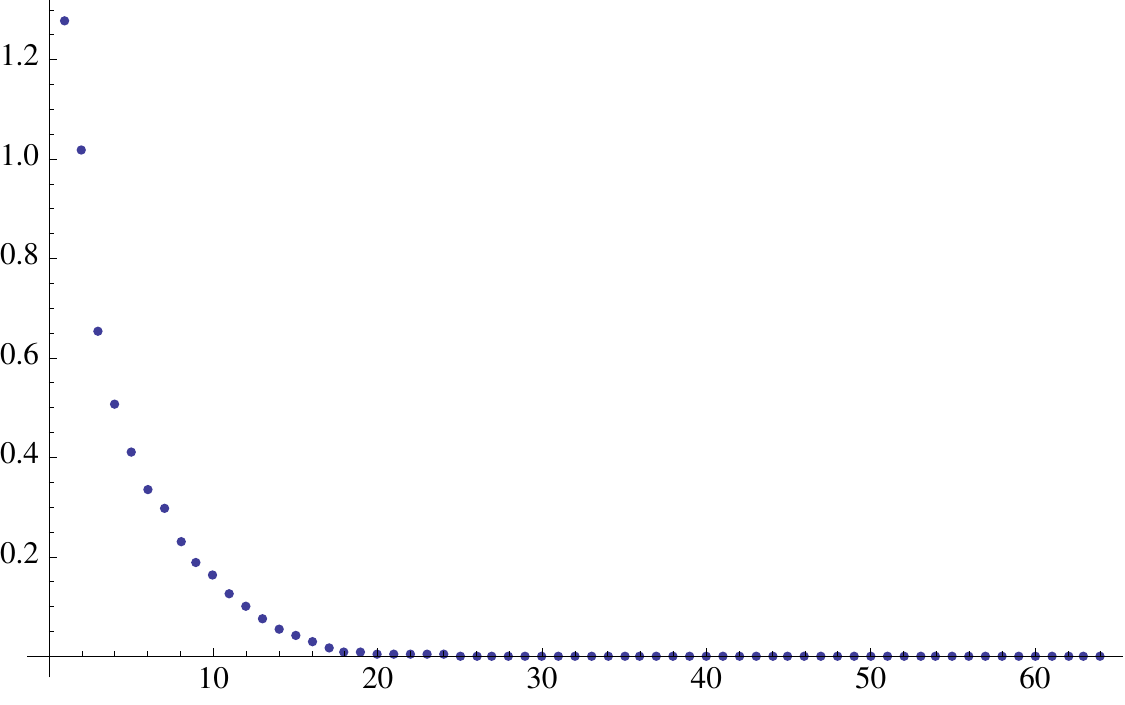}}
\caption{Error Data} \label{Error_data}
\end{center}
\end{figure}

For example, if $i=20$ the value of the square of the error is $0.00491487$. $\mathbf{a}^{20}$ is a knot sequence with 20 interior knots, and is the result of using the greedy algorithm to drop 45 knots, one at a time, from the original knot sequence $\mathbf{a}$ which has 65 interior knots. In Figure  \ref{Compressed_data} we see the plot of $P_{S(\Phi_{(\mathbf{a^{20}},\cdot)})}f$, the plot of the original data set, and the knots in $\mathbf{a^{20}}$  on the horizontal axis.

\begin{figure}[h]
\begin{center}
\scalebox{0.25}{\includegraphics{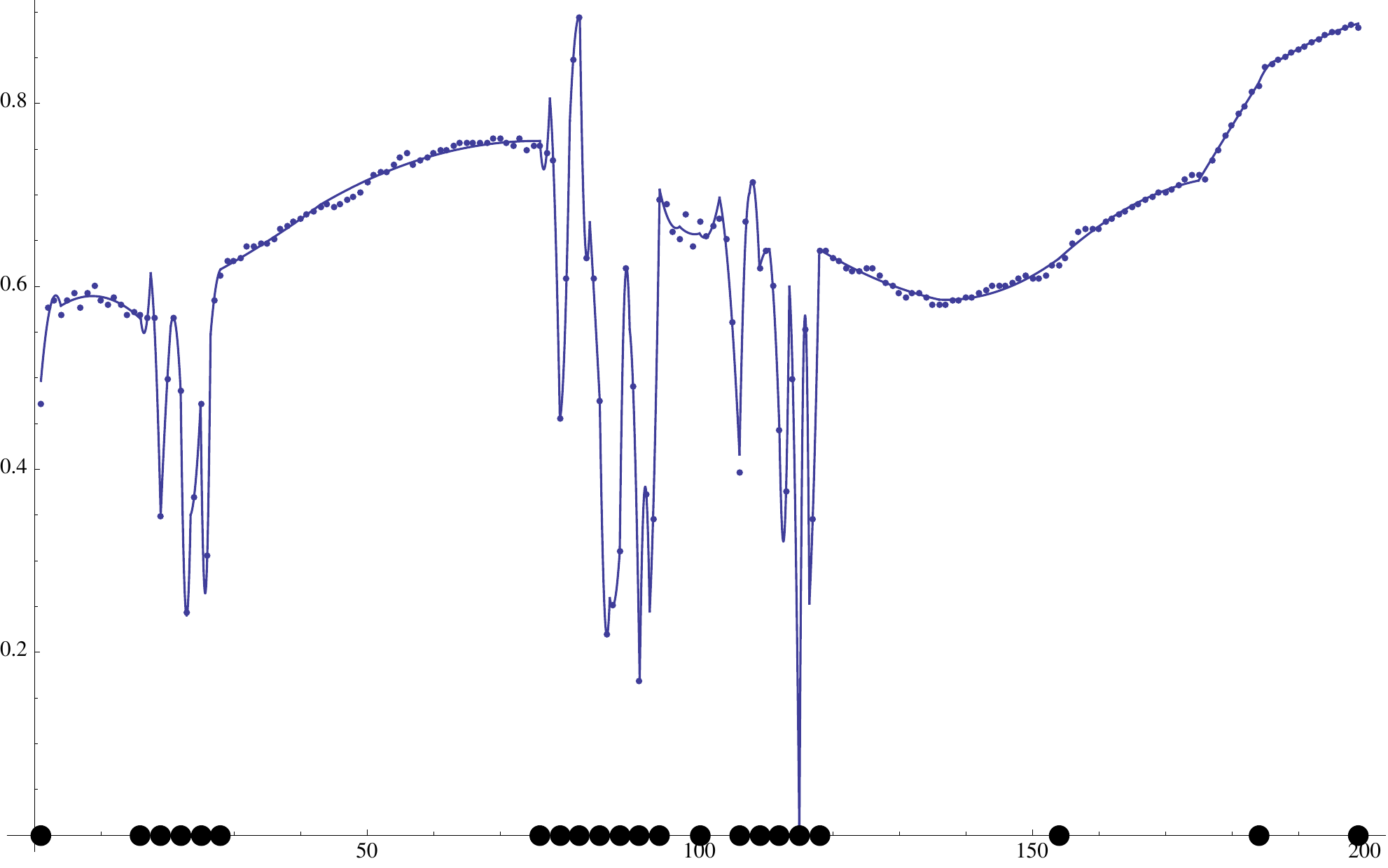}}
\caption{Continuous plot of projection with original data points and knots of $\mathbf{a^{20}}$ on the horizontal axis.} \label{Compressed_data}
\end{center}
\end{figure}

\subsection{Arbitrary polynomial reproduction: Wavelet construction}

For a knot sequence $\mathbf{a}$ and $n=1,2,\ldots$, let $V^0:=S(\Omega_{\mathbf{a}}^n)$ 
and $V^1:=S(\Omega_{\mathbf{a}}^{n+3})$ where $\Omega_{\mathbf{a}}^n$ is given in  Section~\ref{ex1.2}.  By Theorem~\ref{ThmWavelets} there is a basis $\Psi_{\mathbf{a}}^n$ centered on $\mathbf{a}$ such that
$V^1=V^0\oplus W$ where $W=S(\Psi_\mathbf{a}^n)$.  We next use the algorithm from section \ref{waveletConst} to construct such a  $\Psi_{\mathbf{a}}^n$.

Recall that 
$$
r^n =(I-P_{\{\tilde\phi^2,\ldots,\tilde\phi^n,z^n\}})r=(I-P_{\{z^n\}})r_n $$ and 
$$
l^n=(I-P_{\{\tilde\phi^2,\ldots,\tilde\phi^n, z^n\}})l=(I-P_{\{z^n\}})l_n .
$$
For $n=1,2,\ldots,$ let 
$$
\Lambda^n:=\text{span}\{ \tilde\phi^2, \ldots, \tilde\phi^n, z^n\}. 
$$
Note that $\Lambda^1=\text{span}\{z^1\}$.
From the definitions of $\tilde\phi^n$ and $z^n$ it follows that
$$
\Lambda^{n+3}=\Lambda^n\oplus \text{span}\{\tilde\phi^{n+2},z^n_{\perp},z^{n+3}\}=\Lambda^n\oplus\Delta^n,$$
where 
$$z^n_{\perp}=\frac{\tilde\phi^{n+1}}{||\tilde\phi^{n+1}||^2}-\alpha_n\frac{\tilde\phi^{n+3}}{||\tilde\phi^{n+3}||^2}
$$
and $$
\Delta^n:=\text{span}\{\tilde\phi^{n+2},z^n_{\perp},z^{n+3}\}.
$$
Clearly, $z^n_{\perp}$ and $z^n$ are orthogonal and span the same space as $\tilde\phi^{n+1}$ and  $\tilde\phi^{n+3}$.

Now $$\bar V_a^0=\text{span }   \bar\Omega^n_a =\text{span} \{ \bar\omega^n_a \}$$
where $\bar\omega^n_a=r^n\circ\sigma_{a_-}+l^n\circ\sigma_a$.
Similarly $$\bar V_a^1=\text{span }   \bar\Omega^{n+3}_a =\text{span} \{ \bar\omega^{n+3}_a \}$$
where $\bar\omega^{n+3}_a=r^{n+3}\circ\sigma_{a_-}+l^{n+3}n\circ\sigma_a$.
It follows that $\bar k_a^0 = \bar k_a^1=1$ for every $a$. Furthermore tt is easy to see that for every $a$, $Y_a = Y_a^- = Y_a^+ = \{0 \}$, and so $m_a=m_a^-=m_a^+=0$. Lemmas \ref{stepone} and \ref{tildew} imply that $\dim \hat W_a^n = \dim \tilde W_a^n = 1$. The superscript denotes the dependence on $n$ in the construction.
From the definitions of $r^n$ and $l^n$, and the symmetries of the polynomials $\tilde \phi^n$, it can be shown that $l^n$ is the reflection of $r^n$ with respect to the line $x=1/2$. From this we can see that 
\begin{align}
\langle \bar\omega^n_a, \bar\omega^n_a \rangle  & = (a_+ - a_-)\langle r^n,r^n \rangle \notag \\
\langle \bar\omega^n_a, \bar\omega^{n+3}_a \rangle  & = (a_+ - a_-)\langle r^n,r^{n+3} \rangle \notag 
\end{align}
Since $\Lambda^{n+3}=\Lambda^n \oplus \Delta^n$ and $r^n=r-P_{\Lambda^n}r$, we get 
\begin{equation}\label{r_diff}
r^n-r^{n+3}=P_{\Delta^n}r.
\end{equation}
Now, $r^{n+3} \perp \Delta^n$. From this it follows that 
$$
\langle  r^n  , r^{n+3} \rangle=\langle  r^{n+3}  , r^{n+3} \rangle. 
$$

We now begin the construction of the wavelets. Since $\hat W_a^n$ is one dimensional  equation~(\ref{wavehat}) shows it is
spanned by the single function 
$\hat w_a^n := (I-P_{\bar w_a^n})\bar\omega^{n+3}_a$ which is,
$$
\hat w_a^n=(r^{n+3}-\frac{\langle  r^{n+3}  , r^{n+3} \rangle}{\langle  r^n  , r^n \rangle }r^n) \circ \sigma_{a_-}+ (l^{n+3}-\frac{\langle  r^{n+3}  , r^{n+3} \rangle}{\langle  r^n  , r^n \rangle }l^n) \circ \sigma_a.
$$
Since $T_a^n  = (P_{\breve V_{a_-}^1}+P_{\breve V_a^1})\bar V_a^0$ it to is
one dimensional and 
by Lemma \ref{ydim} it follows that $T_a^- = T_a^+ = \{0\}$, and so $U_a^n
= T_a^n$ hence $S_a^n$ is spanned by the single function
$$
s_a^n = -(r^n-r^{n+3}) \circ \sigma_{a_-}+ (l^n-  l^{n+3}) \circ \sigma_a.
$$
Equation~(\ref{wavetilde}) shows that  $\tilde W_a^n$ is spanned by  $(I-P_{\bar V_a^0 \oplus \hat
  W_a^n})s_a^n$. A straightforward computation shows that this function is a scalar multiple of
$$
\tilde w_a^n :=  (r^n-r^{n+3})\circ \sigma_{a_-} - c_a (l^n-l^{n+3})\circ \sigma_a.
$$
where 
$$
c_a = \frac{a-a_-}{a_+ -a}.
$$

We next construct the short wavelets. Since $\breve k_a^0 = n$, $\breve
k_a^1 = n+3$ and from above $\bar k^0_a=\bar k^1_a=1$ it follows from Theorem \ref{waveletdim} that $\dim \breve W_a^n = 1$.
Also from the formula for $T_a^n$ above we find 
 \begin{align}
A_a^-&=\spam \{(r^n-r^{n+3})\circ \sigma_{a_-}\} \notag \\
A_a^+&=\spam \{(l^n-l^{n+3})\circ \sigma_a\} \notag
\end{align}
With the above results, using equation~(\ref{waveshort}) and another straightforward computation we see that $\breve W_a^n$ is spanned by
$$
\breve w_a^n :=\langle z^{n+3},r \rangle  \tilde \phi^{n+2} \circ \sigma_a -\langle \tilde \phi^{n+2},r \rangle z^{n+3} \circ \sigma_a.
$$

In summary  the wavelets constructed are,
\begin{align}\label{wavelets_approx_order}
\hat w_a^n&=(r^{n+3}-\frac{\langle  r^{n+3}  , r^{n+3} \rangle}{\langle  r^n  , r^n \rangle }r^n) \circ \sigma_{a_-}+ (l^{n+3}-\frac{\langle  r^{n+3}  , r^{n+3} \rangle}{\langle  r^n  , r^n \rangle }l^n) \circ \sigma_a \\
\tilde w_a^n&=(r^n-r^{n+3}) \circ \sigma_{a_-}-c_a (l^n-  l^{n+3}) \circ \sigma_a \notag \\
\breve w_a^n &=\langle z^{n+3},r \rangle  \tilde \phi^{n+2} \circ \sigma_a -\langle \tilde \phi^{n+2},r \rangle z^{n+3} \circ \sigma_a. \notag
\end{align}

\section{$\tau$-Wavelets}
\label{tauwavelets}

\subsection{Nested Knot Sequences Determined by $\tau$}

Let $\tau$ denote the "golden ratio" $\frac{1}{2}(1+\sqrt{5})$ which satisfies
the quadratic relation $\tau^{2}=1+\tau$.  A non-negative number $x$ is a \newterm{$\tau$-rational
number} if   it can be represented in the form $x=\sum\limits_{k=m}%
^{n}\varepsilon_{k}\tau^{k}$ where $m\leq n$ are integers  and each $\varepsilon_{k}\in\{0,1\}$.  Furthermore,  $x$ is  a \newterm{$\tau$-integer} if it has such a representation with $m=0$. By the above quadratic relation,
this representation   is unique if one
further requires that $\varepsilon_{k}\varepsilon_{k+1}=0$ whenever $m\leq
k\leq n-1$.  For example, the first few
non-negative $\tau$-integers are $0,1,\tau,\tau^{2},\tau^{2}+1,\tau
^{3},\ldots$. We denote the set of non-negative $\tau$-integers by
$\mathbf{Z}_{\tau}^{+}$. The unique representation of positive $\tau$-integers implies that $\mathbf{Z}_{\tau}^{+}\setminus\{0\}$ can be partitioned
into $\tau\mathbf{Z}_{\tau}^{+}\setminus\{0\}$ and $1+\tau^{2}\mathbf{Z}%
_{\tau}^{+}$. Further, $\tau\mathbf{Z}_{\tau}^{+}\setminus\{0\}$ can be
partitioned into $\tau+\tau^{2}\mathbf{Z}_{\tau}^{+}$ and $\tau^{2}+\tau
^{3}\mathbf{Z}_{\tau}^{+}$. Also, the difference between consecutive $\tau$-integers is either $1$ or $\frac{1}{\tau}$. More specifically, considering
$\mathbf{Z}_{\tau}^{+}$ as a knot sequence $\mathbf{a}$, if $a\in1+\tau
^{2}\mathbf{Z}_{\tau}^{+}$, then $a-a_{-}=1$ and $a_{+}-a=\frac{1}{\tau}$;
while if $a\in\tau+\tau^{2}\mathbf{Z}_{\tau}^{+}$, then $a-a_{-}=\frac{1}%
{\tau}$ and $a_{+}-a=1$; and finally if $a\in\tau^{2}+\tau^{3}\mathbf{Z}%
_{\tau}^{+}$, then $a-a_{-}=1$ and $a_{+}-a=1$. Letting $L$ denote the ``long''
difference of $1$ and letting $S$ denote the ``short'' difference of $\frac
{1}{\tau}$, we denote a non-zero $a\in\mathbf{a}$ as $LS$, respectively
$SL,LL$, if $a\in1+\tau^{2}\mathbf{Z}_{\tau}^{+}$, respectively $a\in\tau
+\tau^{2}\mathbf{Z}_{\tau}^{+}$, $a\in\tau^{2}+\tau^{3}\mathbf{Z}_{\tau}^{+}$.
(Note: The sequence of successive differences of elements of $\mathbf{Z}%
_{\tau}^{+}$ forms an infinite word,%
\[
f=LSLLSLSLLSLLSLSLLSLSL\ldots\text{,}%
\]
with alphabet $\{L,S\}$ which is invariant under the substitution $L\mapsto
LS$, $S\mapsto L$. $f$ is called the \textit{Fibonacci word}; see \cite{Lo}.)

For $k\in\mathbf{Z}$,   let $\mathbf{a}^{k}$ denote the knot sequence
$(\frac{1}{\tau})^{k}\mathbf{Z}_{\tau}^{+}$. The above unique representation
of $\tau$-integers shows that $\mathbf{a}^{k}\subset\mathbf{a}^{k+1}$ for
$k\in\mathbf{Z}$. Also, $\bigcup_{k\in\mathbf{Z}}\mathbf{a}^{k}$ is the
set of non-negative $\tau$-rational numbers which can be shown to be dense in
$\mathbf{R}^{+}$. For an integer $k$ and  $a\in\mathbf{a}^{k}$, let  $a_{-}^{k}$ (resp.
$a_{+}^{k}$) denote the predecessor (resp. successor) of $a$ relative to
$\mathbf{a}^{k}$. At level $k$, each interval $[a,a_{+}^{k}]$ has length
either $\frac{1}{\tau^{k}}$ or $\frac{1}{\tau^{k+1}}$. Such an interval will
be called long at level $k$ if it has length $\frac{1}{\tau^{k}}$ and short at
level $k$ if it has length $\frac{1}{\tau^{k+1}}$. The refinement from level
$k$ to level $k+1$ proceeds as follows. Each long interval at level $k$ is
split into two subintervals $[a,a_{+}^{k+1}]$ and $[a_{+}^{k+1},a_{+}^{k}]$,
where 
\begin{equation}\label{tausub}
a_{+}^{k+1}=(1-\frac{1}{\tau})a+\frac{1}{\tau}a_{+}^{k}.
\end{equation} It follows
that the left subinterval $[a,a_{+}^{k+1}]$ is long at level $k+1$, and the
right subinterval $[a_{+}^{k+1},a_{+}^{k}]$ is short at level $k+1$. Each
short interval at level $k$ is not subdivided and becomes long at level $k+1$;
i.e. if $a_{+}^{k}-a=\frac{1}{\tau^{k+1}}$ then $a_{+}^{k}=a_{+}^{k+1}$. 

\subsection{$\tau$-Wavelets of Haar}

We now show that the $\tau$-wavelets of Haar, constructed in \cite{GP}, can be
considered as a special case of the general wavelet construction outlined in Section  \ref{waveletConst}.
Let $\phi_{1}:=\chi_{\lbrack0,1]}$, $\phi_{2}:=\tau^{\frac{1}{2}}\chi
_{\lbrack0,\frac{1}{\tau}]}$, and $\Phi=\{\phi_{1}(\cdot-b)\mid b\in
\tau\mathbf{Z}_{\tau}^{+}\}\cup\{\phi_{2}(\cdot-c)\mid c\in1+\tau
^{2}\mathbf{Z}_{\tau}^{+}\}$. Then $\Phi$ is an orthonormal basis centered on
$\mathbf{Z}_{\tau}^{+}$ that is generated by  appropriate  $\tau$-integer
translations of $\phi_{1}$ and $\phi_{2}$ and  $S(\Phi)$ is the space of
piecewise constant functions in $L^{2}(\mathbf{R}^{+})$ with breakpoints in
$\mathbf{Z}_{\tau}^{+}$. For $k\in\mathbf{Z}$, let $\Phi
^{k}:=\{\tau^{\frac{k}{2}}f(\tau^{k}\cdot)\mid f\in\Phi\}$ ($=\{\tau^{\frac
{k}{2}}\phi_{1}(\tau^{k}\cdot-b)\mid b\in\tau\mathbf{Z}_{\tau}^{+}\}\cup
\{\tau^{\frac{k}{2}}\phi_{2}(\tau^{k}\cdot-c)\mid c\in1+\tau^{2}%
\mathbf{Z}_{\tau}^{+}\}$). Then $\Phi^{k}$ is an orthonormal basis centered on
$\mathbf{a}^{k}$ and  $S(\Phi^{k})$ is the space of piecewise constant
functions in $L^{2}(\mathbf{R}^{+})$ with breakpoints in $\mathbf{a}^{k}$.

Since $\mathbf{a}^{k}\subset\mathbf{a}^{k+1}$ for $k\in\mathbf{Z}$, it follows that   $S(\Phi^{k})\subset
S(\Phi^{k+1})$ for $k\in\mathbf{Z}$. In particular, $\Phi^{0}$ and
$\Phi^{1}$ are orthonormal bases centered on $\mathbf{Z}_{\tau}^{+}$ and
$S(\Phi^{0})\subset S(\Phi^{1})$. Theorem~\ref{ThmWavelets} thus applies to this
situation. We construct $\Psi$ as outlined in Section~\ref{waveletConst}. 

For $a>0 \in \mathbf{Z}_{\tau}^{+}$, it is easy to see that $\bar k_a^0 = \bar k_a^1 = m_a = m_a^{\pm}=0$. Also, $\breve k_a^0 =1$ for every $a \in \mathbf{Z}_{\tau}^{+}$. For $a \in \mathbf{Z}_{\tau}^{+}$, it follows from the manner in which $\mathbf{a}^0$ is refined to $\mathbf{a}^1$ that
$$
\breve k_a^1 =
\begin{cases}
1 & \text{if }a\in1+\tau^{2}\mathbf{Z}_{\tau}^{+}\\
2 & \text{if }a\in\tau\mathbf{Z}_{\tau}^{+}.
\end{cases}
$$
Thus Theorem \ref{waveletdim} implies that $\bar W_a = \{0\}$ for $a>0 \in \mathbf{Z}_{\tau}^{+}$, and 
$$
\dim \breve W_a =
\begin{cases}
0 & \text{if }a\in1+\tau^{2}\mathbf{Z}_{\tau}^{+}\\
1 & \text{if }a\in\tau\mathbf{Z}_{\tau}^{+}.
\end{cases}
$$
From equation (\ref{breveWa}), for $a \in \tau\mathbf{Z}_{\tau}^{+}$ we have $\breve W_a = (I-P_{\breve V_a^0})\breve V_a^1$. Also, for $a \in \tau\mathbf{Z}_{\tau}^{+}$, $\breve V_a^0$ is spanned by the function $\chi_{[a,a_+]}$ and $\breve V_a^1$ is spanned by the functions $\chi_{[a,a+\frac{1}{\tau}]}$ and $\chi_{[a+\frac{1}{\tau},a_+]}$. An easy computation shows that an orthonormal basis for $\breve W_a$ consists of the single function
$
\tau^{-\frac{1}{2}}\chi_{[a,a+\frac{1}{\tau}]}-\tau^{\frac{1}{2}}\chi_{[a+\frac{1}{\tau},a_+]}.
$

With
$$\psi:=\tau^{-\frac{1}{2}}\chi_{[0,\frac{1}{\tau}]}-\tau^{\frac{1}{2}}\chi_{[\frac{1}{\tau},1]},$$ it follows that $W = S(\Psi)$ where  $\Psi :=\{\psi(\cdot-b)\mid b\in\tau\mathbf{Z}_{\tau}^{+}\}$. Note that $\Psi$ is an
orthonormal basis centered on $\mathbf{Z}_{\tau}^{+}$, obtained by appropriate
$\tau$-integer translations of $\psi$.
 Letting $\Psi^{k}%
:=\{\tau^{\frac{k}{2}}\psi(\tau^{k}\cdot-b)\mid b\in\tau\mathbf{Z}_{\tau}%
^{+}\}$, for $k\in\mathbf{Z}$, it is easy to check that $\Psi^{k}$ is a basis
centered on $\mathbf{a}^{k}$ and that $S(\Phi^{k})\oplus S(\Psi
^{k})=S(\Phi^{k+1})$. Since $\bigcap\limits_{k\in\mathbf{Z}}S(\Phi
^{k})=\{0\}$ and $\bigcup\limits_{k\in\mathbf{Z}}S(\Phi^{k})$ is dense in
$L^{2}(\mathbf{R}^{+})$, it follows that $L^{2}(\mathbf{R}^{+})=\bigoplus
\limits_{k\in\mathbf{Z}}S(\Psi^{k})$, i.e. $\{\tau^{\frac{k}{2}}\psi
(\tau^{k}\cdot-b)\mid k\in\mathbf{Z,}b\in\tau\mathbf{Z}_{\tau}^{+}\}$ is an
orthonormal basis of $L^{2}(\mathbf{R}^{+})$. These functions are called the
$\tau$-wavelets of Haar in \cite{GP}.

\subsection{Continuous, Piecewise Quadratic $\tau$-Wavelets}
\label{cpqtw}
  
 Let 
 \begin{equation}\label{PhiDef}
 \Phi^k:=\Omega_{\mathbf{a}^k,\boldsymbol{\theta}},\qquad  k\in\Z
 \end{equation} denote the continuous, piecewise quadratic, orthonormal basis centered on $\mathbf{a}^k$ as described in Section~\ref{ex1} with constant parameter sequence $\boldsymbol{\theta}=\left(\theta_a\right)_{a\in \mathbf{a}^k}$
 where $\theta_a = 1/\tau$ for $a\in \mathbf{a}^k$ (here we  assume  that the components of $ \Phi^k$ have been normalized).    It follows from \eqref{tausub} that $(\mathbf{a}^k, \boldsymbol{\theta} )$   satisfy the hypotheses of Proposition~\ref{c0quadMRA}. Since, as previously discussed $\bigcup_k \mathbf{a}^k$ is dense in $J=\R_+$ and since
 $\boldsymbol{\alpha}=\bigcap_k \mathbf{a}^k\cup \mathbf{b}(\mathbf{a}^k,\boldsymbol{\theta}^k) =\{0\}$,  Proposition~\ref{c0quadMRA} implies that the spaces $V^k:=S(\Phi^k)$ form a {\bf multiresolution analysis of $L^2(\R_+)$}, that is, 
\begin{enumerate}
\item  $V^k\subset V^{k+1},\qquad (k\in \Z)$,
\item  $\bigcup_kV^k$ is dense in $L^2(\R_+)$,
\item $\bigcap_kV^k=\{0\}$.
 \end{enumerate}

As in the $\tau$-wavelets of Haar construction, the basis 
$
\Phi:=\Phi^0$
 centered on the knot
sequence $\mathbf{a}^{0}=\mathbf{Z}_{\tau}^{+}$ for $V_{0}$ can  also be
generated by the $\tau$-integer translations of a small number of functions
in $\Phi$.   Suppose $a$ and $a'$ in $\mathbf{Z}_{\tau}^{+}\setminus\{0\}$ have the same classification (say $LS$), 
then from the construction of $\Phi$  it follows that $\Phi_a$ is a translate of $\Phi_{a'}$. (Note: Here $\Phi_a$ stands for the more cumbersome, yet precise, $\Phi_{a,\mathbf{a}^0}^0$; see section \ref{sect:GeneralBases}.) Specifically, we have, for
$b\in \mathbf{Z}_{\tau}^{+}$, \begin{eqnarray}
\Phi_{1+\tau^{2}b}(\cdot) & = & \Phi_{1}(\cdot-\tau^{2}b), \nonumber \\
\Phi_{\tau+\tau^{2}b}(\cdot) & = & \Phi_{\tau}(\cdot-\tau^{2}b),   \label{tautrans} \\
\Phi_{\tau^{2}+\tau^{3}b}(\cdot) & = & \Phi_{\tau^{2}}(\cdot-\tau^{3}b).\nonumber
\end{eqnarray}
Thus every function in $\Phi$, with the exception of those in $\Phi_0$,  is a  $\tau$-integer translate of a
function from $\Phi_{1},$ $\Phi_{\tau},$ or $\Phi_{\tau^{2}}$.   For any positive  $a\in \mathbf{Z}_{\tau}^{+}$,
let 
$$\beta(a):=\begin{cases} 
1 & \text{if $a\in 1+\tau^2 \mathbf{Z}^+_\tau$,}\\
\tau & \text{if $a\in \tau+\tau^2 \mathbf{Z}^+_\tau$,}\\
\tau^2 & \text{if $a\in \tau^2+\tau^3 \mathbf{Z}^+_\tau,$}
\end{cases}
$$
and $\mu(a):=a-\beta(a)$. 
Then   $a=\beta(a)+\mu(a)$
where $\beta(a)\in \{1,\tau, \tau^2\}$ and 
$\mu(a)\in \tau^{2}\mathbf{Z}_{\tau}^{+}$ if $\beta(a)= 1$ or $\tau$ and $\mu(a)\in \tau^{3}\mathbf{Z}_{\tau}^{+}$ if $\beta(a)=  \tau^2$.
 Thus, we can write \eqref{tautrans} in the more compact form 
 \begin{equation}\label{Phia}
 \Phi_a=\Phi_{\beta(a)}(\cdot-\mu(a))\qquad (a\in \mathbf{Z}_{\tau}^{+}\setminus \{0\}).
 \end{equation}

\begin{figure}[thbp]
\begin{center}
\includegraphics[scale=.3]{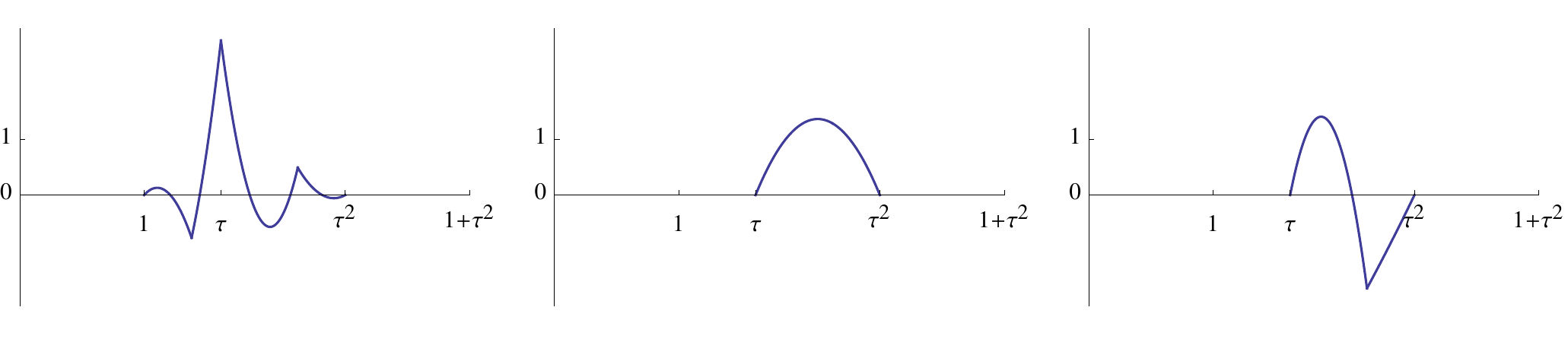}
\caption{$\Phi_{\tau}$.}
\label{phisubtau}
\end{center}
\end{figure}

For $k\in \mathbf{Z}$ and $a\in \mathbf{Z}_{\tau}^{+}$, let
$$\Phi_{k,a}=\tau^{k/2}\Phi_a(\tau^k\cdot).$$
We remark that with this definition, $\Phi_{k,a}=\Phi^k_{a/\tau^k,\mathbf{a}^k}$, and we apologize for the confusing notation. 
Note that $V^0 \subset V^1$ and both $\Phi^0$ and $\Phi^1$ can be regarded as orthonormal bases centered on the knot sequence $\mathbf{a}^0=\mathbf{Z}_{\tau}^{+}$. For $a\in \mathbf{a}^0$, $\Phi_a^1$ will denote the more precise $\Phi_{a,\mathbf{a}^0}^1$.
It follows that we have the following `polyphase' like representation:
$$
\Phi_0^1=
\left[
\begin{matrix}
\Phi_{1,0}\\
\Phi_{1,1}
\end{matrix}
\right],
\Phi_1^1=\Phi_{1,\tau},
\Phi_{\tau}^1=
\left[
\begin{matrix}
\Phi_{1,\tau^2}\\
\Phi_{1,1+\tau^2}
\end{matrix}
\right]
,~\text{and}~
\Phi_{\tau^2}^1=
\left[
\begin{matrix}
\Phi_{1,\tau^3}\\
\Phi_{1,1+\tau^3}
\end{matrix}
\right].
$$

Figure \ref{phisubtau} shows the three functions in $\Phi_{\tau}$ and Figure \ref{phisuper1subtau} shows the six functions in $\Phi_{\tau}^1$.

\begin{figure}[tbhp]
\begin{center}
\includegraphics[scale=.3]{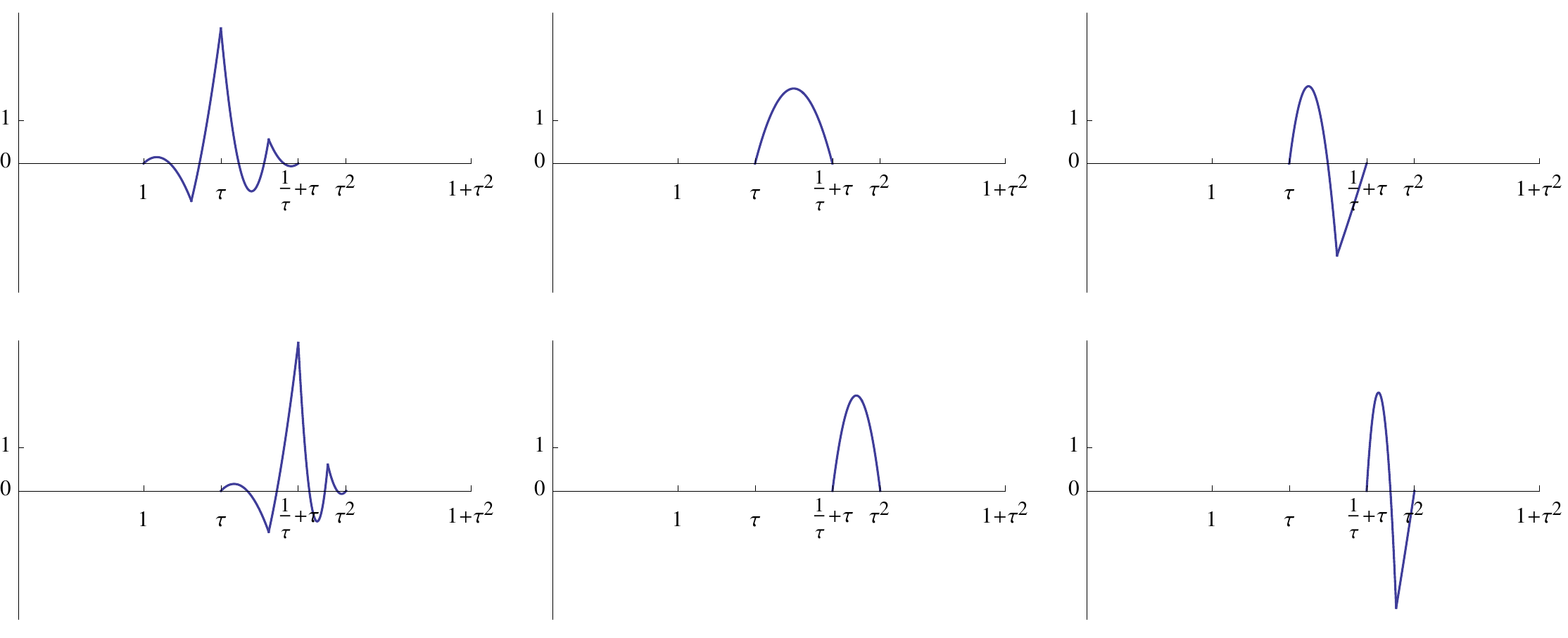}
\caption{$\Phi_{\tau}^1$.}
\label{phisuper1subtau}
\end{center}
\end{figure}

For $a\in \mathbf{Z}_{\tau}^{+}$ and $a>\tau^2$, 
$$
\Phi_a^1=\Phi_{\beta(a)}^1(\cdot - \mu(a)).
$$

The coefficient matrices from section \ref{MatCoeffs} can now be computed. For $a\in \mathbf{Z}_{\tau}^{+}$ and $a' \in \{a_-,a\}$, $c_{aa'}=\langle \Phi_a, \Phi_{a'}^1 \rangle$. In particular we have
$$
\Phi_0=c_{00}\Phi_0^1,~\text{and}~\Phi_a=c_{aa_-}\Phi_{a_-}^1+c_{aa}\Phi_{a}^1,~\text{for}~a>0.
$$
For any $k \in \mathbf{Z}$, $V^k \subset V^{k+1}$ and both $\Phi^k$ and $\Phi^{k+1}$ can be regarded as orthonormal bases centered on the knot sequence $\mathbf{a}^k$. For $a\in \mathbf{Z}_{\tau}^{+}$ it follows that  $\Phi_{a/\tau^k,\mathbf{a}^k}^{k+1}= \tau^{k/2}\Phi_a^1(\tau^k \cdot)$. We have, for $b \in \mathbf{a}^k$,
$$
\Phi_{0,\mathbf{a}^k}^k=c_{00}\Phi_{0,\mathbf{a}^k}^{k+1},~\text{and}~\Phi_{b,\mathbf{a}^k}^k = c_{(\tau^kb)(\tau^kb)_-}\Phi_{b_-,\mathbf{a}^k}^{k+1}+c_{(\tau^kb)(\tau^kb)}\Phi_{b,\mathbf{a}^k}^{k+1}~\text{for}~b>0.
$$
(Note: In the second equation above there are two occurrences  of the minus subscript. The first denotes the predecessor of $\tau^kb$ in the knot sequence $\mathbf{a}^0$. The second denotes the predecessor of $b$ in the knot sequence $\mathbf{a}^k$. See section \ref{sect:GeneralBases}.)

Recall that 
$$
\Phi_0^1=
\left[
\begin{matrix}
\Phi_{1,0}\\
\Phi_{1,1}
\end{matrix}
\right].
$$
Thus
$$
c_{00}=\langle \Phi_0,\Phi_0^1\rangle = 
\left[
\begin{matrix}
\langle \Phi_0,\Phi_{1,0}\rangle&\langle \Phi_0,\Phi_{1,1}\rangle
\end{matrix}
\right].
$$
Letting, 
$$
C_{a,a'}:=\langle \Phi_a,\Phi_{1,a'}\rangle, 
$$
for $a,a'\in \mathbf{Z}^+_\tau$,
we now have
$$
c_{00}=
\left[
\begin{matrix}
C_{0,0} & C_{0,1}
\end{matrix}
\right]
~\text{and}~
c_{10}=
\left[
\begin{matrix}
C_{1,0} & C_{1,1}
\end{matrix}
\right].
$$
and, similarly, $c_{11}=C_{1,\tau}$, $c_{\tau 1}=0$,
$$
c_{\tau \tau}=
\left[
\begin{matrix}
C_{\tau,\tau^2} & C_{\tau,1+\tau^2}
\end{matrix}
\right],
c_{\tau^2 \tau}=
\left[
\begin{matrix}
C_{\tau^2,\tau^2} & C_{\tau^2,1+\tau^2}
\end{matrix}
\right],
$$
and
$$
c_{\tau^2 \tau^2}=
\left[
\begin{matrix}
C_{\tau^2,\tau^3} & C_{\tau^2,1+\tau^3}
\end{matrix}
\right].
$$
For $a>\tau^2$, $c_{aa}=c_{\beta(a) \beta(a)}$ and $c_{aa_-}=c_{\beta(a) \beta(a)_-}$.

In Table~\ref{cTable}  we provide (with the aid of {\em Mathematica}\texttrademark) the matrices $C_{a,a'}$ defined above.  Here we assume that $\Phi_a$ is ordered as follows: of the three components,  the second and third components are the ``$q$'' and  ``$z$''
components from $\breve\Phi_a$, respectively, while the first component is the component of $\Phi_a$ that does not vanish at $a$.

{\tiny \ctable }

\bigskip

We now construct an orthogonal wavelet basis $\Psi$, centered on
$\mathbf{a}^0$,  for the continuous, piecewise quadratic scaling functions
$\Phi$ such that $$S(\Phi^0)\oplus S(\Psi)=S(\Phi^1)$$  using the method
from section \ref{waveletConst}. As mentioned above this  provides an
example where the spaces $T^{\pm}_a$ contain non-zero functions.

Following Section \ref{waveletConst}, we determine the dimensions $\bar k_a^{\epsilon},  m_a, \text{and}~ m_a^{\pm}$.
If $a>0$ then $\bar k_a^0 = \bar k_a^1 = 1$ and $m_a=0$. Also, for $a>0$,
\begin{equation} \label{taum+}
m_a^+ =
\begin{cases}
1 & \text{if}~a \in 1+\tau^2 \Z_{\tau}^+\\
0 & \text{otherwise}
\end{cases}
\end{equation}
and 
\begin{equation} \label{taum-}
m_a^- =
\begin{cases}
1 & \text{if}~a \in \tau+\tau^2 \Z_{\tau}^+\\
0 & \text{otherwise}.
\end{cases}
\end{equation}
From Theorem \ref{waveletdim} we get
$$
\dim \bar W_a=
\begin{cases}
1 & \text{if either}~a \in 1+\tau^2 \Z_{\tau}^+ ~\text{or}~a \in \tau+\tau^2 \Z_{\tau}^+\\
2 & \text{if}~a \in \tau^2+\tau^3 \Z_{\tau}^+.
\end{cases}
$$
It follows that for $a \in 1+\tau^2 \Z_{\tau}^+ ~\text{or}~a \in \tau+\tau^2 \Z_{\tau}^+$ that $\bar W_a= \hat W_a$ which is spanned by a single function. For $a \in \tau^2+\tau^3 \Z_{\tau}^+$, a basis for $\bar W_a$ consists of two functions, one forming a basis for $\hat W_a$ and the other a basis for $\tilde W_a$.
In all cases, Lemma \ref{stepone} implies that for $a>0$, $\dim \hat W_a=1$. To find a basis for 
$\hat W_a$ we begin by constructing bases for $\hat W_1,\hat W_{\tau}$, and $\hat W_{\tau^2}$. We use the notation introduced in Section \ref{ex1}.

For $a>0$ an orthonormal basis of $\bar V_a^0$ consists of the single function $\bar \phi_a^0$ which is a suitable scalar multiple of $r^{1/\tau} \circ \sigma_{a_-}+l^{1/\tau} \circ \sigma_a.$ It follows that an orthonormal basis of $\bar V_1^1$ consists of a single function $\bar \phi_1^1$ which is a suitable scalar multiple of $\bar \phi_{\tau}^0(\tau \cdot)$. Similarly an orthonormal basis of $\bar V_{\tau}^1$ consists of a single function $\bar \phi_{\tau}^1$ which is a suitable scalar multiple of $\bar \phi_{\tau^2}^0(\tau \cdot)$, and an orthonormal basis of $\bar V_{\tau^2}^1$ consists of a single function $\bar \phi_{\tau^2}^1$ which is a suitable scalar multiple of $\bar \phi_{\tau^3}^0(\tau \cdot)$.

For $a=1,\tau,\tau^2$, let
$$
\hat w_a := c_{\hat w_a}(\bar \phi_a^1- \langle \bar \phi_a^1,\bar \phi_a^0 \rangle \bar \phi_a^0),
$$
where  $c_{\hat w_a}$ is a constant so the $\Vert \hat w_a\Vert=1$.
By construction for each $a>0$
 \begin{equation}
 \hat w_a=\hat w_{\beta(a)}\big(\cdot-\mu(a)\big)
 \end{equation}
forms an orthonormal basis for the one dimensional  space $\hat W_a$.
 Figure~\ref{tauhatwavelets} shows the graphs of $\hat w_1,\hat w_\tau$, and $\hat w_{\tau^2}$.

\begin{figure}[htbp]
\begin{center}
\includegraphics[scale=1]{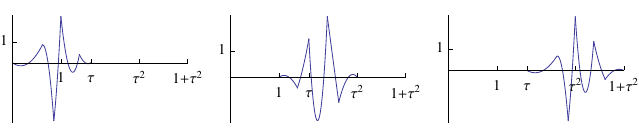}
\caption{$\hat w_a$ for $a=1,\tau,\tau^2$}
\label{tauhatwavelets}
\end{center}
\end{figure}

Next we construct a basis for $\tilde W_{\tau^2}$.  We begin by constructing the spaces defined in Section \ref{dws}. A basis for $T_{\tau^2}$ consists of the single function
$$
t_{\tau^2}:=\bar \phi_{\tau^2}^0- \langle \bar \phi_{\tau^2}^0, \bar \phi_{\tau^2}^1 \rangle \bar \phi_{\tau^2}^1.
$$
Since $m_{\tau^2}=m_{\tau^2}^+=m_{\tau^2}^-=0$,  by equations (\ref{taum+}) and (\ref{taum-}), it follows from Lemma \ref{ydim} that $U_{\tau^2}=T_{\tau^2}$. Thus $S_{\tau^2}$ is spanned by the single function
$$
s_{\tau^2}:= (\chi_{[\tau^2,1+\tau^2]}-\chi_{[\tau,\tau^2]})t_{\tau^2},
$$
and so, by
 Lemma \ref{tildew}, an orthonormal basis of $\tilde W_{\tau^2}$ consists of the single function
$$
\tilde w_{\tau^2}:= c_{\tilde w_{\tau^2}}(s_{\tau^2}- \langle s_{\tau^2},
\bar \phi_{\tau^2}^0 \rangle \bar \phi_{\tau^2}^0  -  \langle s_{\tau^2},
\hat w_{\tau^2} \rangle \hat w_{\tau^2}),
$$
where $c_{\tilde w_{\tau^2}}$ is chosen so that $\tilde w_{\tau^2}$ is of
norm one.
It follows that for $b \in \Z_{\tau}^+$, an orthonormal basis for $\tilde W_{\tau^2 + b \tau^3}$ consists of the single function
\begin{equation}
\tilde w_{\tau^2 + b \tau^3} = \tilde w_{\tau^2}(\cdot -  b \tau^3).
\end{equation}

In Figure~\ref{tautildewavelet} we see $\tilde w_{\tau^2}$.

\begin{figure}[htbp]
\begin{center}
\includegraphics[scale=1]{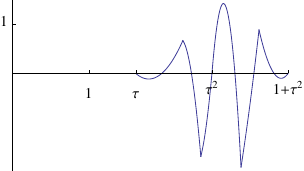}
\caption{$\tilde w_{\tau^2}$}
\label{tautildewavelet}
\end{center}
\end{figure}

Completing the wavelet construction, we next determine $\breve W_a$ for $a \ge 0$. Observe that
$$
\breve k_a^0=
\begin{cases}
3 &\text{if}~a=0\\
2 &\text{if}~a>0
\end{cases}
$$
and 
$$
\breve k_a^1=
\begin{cases}
6 &\text{if}~a=0\\
2 &\text{if}~a \in 1+\tau^2 \Z_{\tau}^+\\
5 & \text{if}~a >0 \in \tau \Z_{\tau}^+.
\end{cases}
$$

It follows from Theorem \ref{waveletdim} and equations (\ref{taum+}) and (\ref{taum-}) that
$$
\dim \breve W_a =
\begin{cases}
2 &\text{if}~a=0\\
0 &\text{if}~a \in 1+\tau^2 \Z_{\tau}^+\\
1 &\text{if}~a  \in \tau \Z_{\tau}^+\setminus\{0\}.
\end{cases}
$$
We construct the spaces $\breve W_a$ using equation (\ref{breveWa}). Note that for any $a>0$, $A_a^-$ is spanned by the single function
$$
f_a^-:=  (\bar \phi_a^0-\langle \bar \phi_a^0, \bar \phi_a^1 \rangle \bar \phi_a^1)\chi_{[a_-,a]},
$$
and $A_a^+$ is spanned by the single function
$$
f_a^+:=(\bar \phi_a^0-\langle \bar \phi_a^0, \bar \phi_a^1 \rangle \bar \phi_a^1) \chi_{[a,a_+]}.
$$

 We first observe that $\bar \phi_{1+\tau^2}^0(\tau \cdot) \in \breve V_{\tau}^1$. Also, $\breve V_{\tau}^0$ is spanned by the functions $q \circ \sigma_{\tau}$ and $z^{1/\tau} \circ \sigma_{\tau}$. Let $$\breve w_{\tau}:=\breve c_{\tau}(I-P_{\spam\{f_{\tau}^+, q \circ \sigma_{\tau}, z^{1/\tau} \circ \sigma_{\tau}, f_{\tau^2}^-\}})\bar \phi_{1+\tau^2}^0(\tau \cdot),$$  where $\breve c_{\tau}$ is a normalization constant.  Since dim $\breve W_{\tau}=1$,  it follows that $\breve w_{\tau}$ forms an orthonormal basis for $\breve W_{\tau}$.
Furthermore,    $\breve w_{\tau^2}:= \breve w_{\tau}(\cdot-1)$   forms an orthonormal basis for $\breve W_{\tau^2}$.

For $b \in \Z_{\tau}^+$, we then define
\begin{align*}
\breve w_{\tau + \tau^2b}&:= \breve w_{\tau}(\cdot - \tau^2b)\\
\breve w_{\tau^2 + \tau^3b}&:= \breve w_{\tau^2}(\cdot - \tau^3b).
\end{align*}
It follows that for $b \in \Z_{\tau}^+$, $\breve w_{\tau + \tau^2b}$ (resp. $\breve w_{\tau^2 + \tau^3b}$ ) forms an orthonormal basis for     $\breve W_{\tau + \tau^2b}$ (resp. $\breve W_{\tau^2 + \tau^3b}$).
 Figure~\ref{taubrevewavelet} shows the graph of $\breve w_{\tau}$.

\begin{figure}[htbp]
\begin{center}
\includegraphics[scale=1]{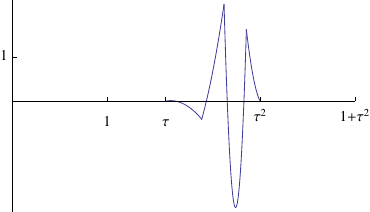}
\caption{$\breve w_{\tau}$}
\label{taubrevewavelet}
\end{center}
\end{figure}

Recall that $\breve W_0 = \breve V_0^1 \ominus (\breve V_0^0 \oplus A_1^-)$ and that  $\dim \breve W_0=2$.   
It is   easy to check that $\breve w_{0,1}:= \breve w_{\tau}(\cdot +\tau)$ is an element of $\breve W_0$. Then   $\breve w_{0,2}$ is chosen to be the unique (up to a sign) element in $\breve W_0 $ so that $\{\breve w_{0,1},\breve w_{0,2}\}$ is an orthonormal basis for $\breve W_0$.
 Figure~\ref{taubrevewavelet02} shows the graph of $\breve w_{0,2}$.

\begin{figure}[htbp]
\begin{center}
\includegraphics[scale=1]{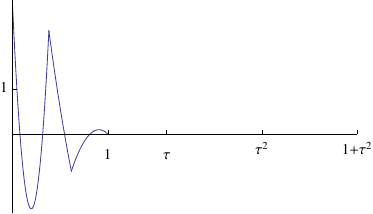}
\caption{$\breve w_{0,2}$}
\label{taubrevewavelet02}
\end{center}
\end{figure}

We now have that
$$
\Psi_0=
\left[
\begin{matrix}
\breve w_{0,1}\\
\breve w_{0,2}
\end{matrix}
\right],
\Psi_1=
\left[
\begin{matrix}
\hat w_1
\end{matrix}
\right],
\Psi_{\tau}=
\left[
\begin{matrix}
\hat w_{\tau}\\
\breve w_{\tau}
\end{matrix}
\right],
~\text{and}~
\Psi_{\tau^2}=
\left[
\begin{matrix}
\hat w_{\tau^2}\\
\tilde w_{\tau^2}\\
\breve w_{\tau^2}
\end{matrix}
\right].
$$
For $a>\tau^2$, it follows from \eqref{Phia} that $\Psi_a=\Psi_{\beta(a)}(\cdot - \mu(a))$.

Since $S(\Phi^k)\subset S(\Phi^{k+1})$, for $k\in \mathbf{Z}$, it follows from Theorem \ref{ThmWavelets} that there exists an orthonormal basis, $\Psi^k$ centered on $\mathbf{a}^k$ so that $S(\Phi^{k+1})=S(\Phi^k) \oplus S(\Psi^k)$.
For $k\in \mathbf{Z}$ and $a\in \mathbf{Z}_{\tau}^+$, let
$$
\Psi_{k,a}:=\tau^{k/2}\Psi_a(\tau^k\cdot).
$$
Then, for $k\in \mathbf{Z}$ and $b\in \mathbf{a}^k$,
$$
\Psi_{b,\mathbf{a}^k}^k=\Psi_{k,\tau^kb}.
$$

The wavelet coefficient matrices from section \ref{MatCoeffs} can now be computed. For $a\in \mathbf{Z}_{\tau}^{+}$ and $a' \in \{a_-,a\}$, $d_{aa'}=\langle \Psi_a, \Phi_{a'}^1 \rangle$. In particular we have
$$
\Psi_0=d_{00}\Phi_0^1,~\text{and}~\Psi_a=d_{aa_-}\Phi_{a_-}^1+d_{aa}\Phi_{a}^1,~\text{for}~a>0.
$$

Recall that 
$$
\Phi_0^1=
\left[
\begin{matrix}
\Phi_{1,0}\\
\Phi_{1,1}
\end{matrix}
\right].
$$
Thus
$$
d_{00}=\langle \Psi_0,\Phi_0^1\rangle = 
\left[
\begin{matrix}
\langle \Psi_0,\Phi_{1,0}\rangle&\langle \Psi_0,\Phi_{1,1}\rangle
\end{matrix}
\right].
$$
This suggests defining, for $a,a'\in \mathbf{Z}^+_\tau$,
$$
D_{a,a'}:=\langle \Psi_a,\Phi_{1,a'}\rangle.
$$
We now have
$$
d_{00}=
\left[
\begin{matrix}
D_{0,0} & D_{0,1}
\end{matrix}
\right]
~\text{and}~
d_{10}=
\left[
\begin{matrix}
D_{1,0} & D_{1,1}
\end{matrix}
\right].
$$
Similarly it follows that $d_{11}=D_{1,\tau}$, $d_{\tau 1}=0$,
$$
d_{\tau \tau}=
\left[
\begin{matrix}
D_{\tau,\tau^2} & D_{\tau,1+\tau^2}
\end{matrix}
\right],
d_{\tau^2 \tau}=
\left[
\begin{matrix}
D_{\tau^2,\tau^2} & D_{\tau^2,1+\tau^2}
\end{matrix}
\right],
$$
and
$$
d_{\tau^2 \tau^2}=
\left[
\begin{matrix}
D_{\tau^2,\tau^3} & D_{\tau^2,1+\tau^3}
\end{matrix}
\right].
$$
For $a>\tau^2$, $d_{aa}=d_{\beta(a) \beta(a)}$ and $d_{aa_-}=d_{\beta(a) \beta(a)_-}$.
In Table~\ref{dTable}  we provide the matrices $D_{a,a'}$, mentioned above, which were computed with the aid of {\em Mathematica}\texttrademark.

{\tiny
\dtable
}

\newpage

\noindent
{\bf Acknowledgements:}  We thank the referees for their careful reading and thoughtful suggestions.

\end{document}